\newcommand{\N}{\mathbb N}
\newcommand{\Z}{\mathbb Z}
\newcommand{\Q}{\mathbb Q}
\newcommand{\Orb}{\mathcal O}
\newtheorem{theorem}{Theorem}[section]
\newtheorem{lemma}[theorem]{Lemma}
\newtheorem{corollary}[theorem]{Corollary}
\newtheorem{proposition}[theorem]{Proposition}
\newtheorem{example}[theorem]{Example}
\theoremstyle{definition}
\newtheorem{definition}{Definition}
\begin{document}

\begin{frontmatter}[classification=text]


\author[ethan]{Ethan M. Coven}
\author[anthony]{Anthony Quas\thanks{Supported by NSERC }}
\author[reem]{Reem Yassawi}

\begin{abstract}
We study the automorphism group of an infinite minimal shift $(X,\sigma)$
such that the complexity difference function, $p(n+1)-p(n)$, is bounded. 
We give some new bounds on $\mbox{Aut}(X,\sigma)/\langle\sigma\rangle$
and also study the one-sided case. 
For a class of Toeplitz shifts, including the class of  
shifts defined by constant-length primitive substitutions with 
a coincidence, and with height one, 
we show that the two-sided automorphism group is a cyclic group. 
We next focus on shifts generated by  primitive constant-length 
substitutions. For these shifts,
we give an algorithm that computes their two-sided 
automorphism group. Finally we show that with the same techniques, 
we are able to   compute the set of 
conjugacies between two such shifts.
\end{abstract}
\end{frontmatter}

\section{Introduction}

In this article we study the automorphism  groups $\mbox{Aut}(X,\sigma)$ of 
some ``small" shifts $(X,\sigma)$. All terms are defined as they are needed 
in Sections \ref{sec:sublin} to \ref{sec:coincidences}. 
 
In the brief Section \ref{sec:sublin},
we show that for shifts with sublinear complexity, there are bounds on the 
orders of $\mbox{Aut}(X,\sigma)$  for one-sided shifts and 
$\mbox{Aut}(X,\sigma)/\langle \sigma\rangle$ for two-sided shifts. 

In \cite{coven_automorphism}, Coven explicitly describes 
the endomorphism monoid of 
any non-trivial two-sided constant-length substitution shift  
$(X,\sigma)$  on two letters. 
Either the shift has a metric discrete spectrum, in which case $\mbox{Aut}(X,\sigma)$  
consists of the powers of the shift, or the shift has a partly continuous
spectrum, in which case the ``letter-exchanging" automorphism is 
also present. In both cases all endomorphisms are automorphisms.
For constant-length substitutions on larger alphabets, those whose shifts 
have a discrete spectrum are 
precisely those that have {\em coincidences}, \cite[Thm 7]{dekking}
and all other constant-length shifts, 
including the family of  {\em bijective substitutions}, have a partly 
continuous spectrum. (See Section \ref{subsec:background} for the 
definitions of these two terms.)
 
Primitive substitution shifts are uniquely ergodic \cite{michel}, and in the 
measurable setting Lemanczyk and Mentzen \cite{lemanczyk_mentzen} show 
that any isomorphism of a bijective substitution is the composition of 
a shift with a letter-to-letter code.  Host and Parreau  \cite{host_parreau} 
extend the results of \cite{lemanczyk_mentzen},   showing that for the   
constant-length substitution shifts which do not have a purely 
discrete spectrum,  maps in the commutant\footnote{The {\em commutant} 
is the measurable analogue of the endomorphism semigroup.} are essentially 
topological and, up to a shift, have small radius.

Substitution shifts arising from primitive substitutions are a 
sub-family of shifts with sublinear complexity. We use a non-trivial 
result of Cassaigne \cite{cas} to prove Theorem \ref{thm:main_result}, 
which tells us that for infinite minimal one-sided shifts $(X,\sigma)$ 
with sublinear complexity,  automorphisms have a bounded order, and, 
modulo powers of the shift, the same is true for infinite minimal 
two-sided shifts with sublinear complexity.
There are  more general results, with similar proofs,  in recent articles, by 
Donoso, Durand, Maass and Petite in 
\cite[Theorem~1.1]{durand_petite_maass}, 
and Cyr and Kra in \cite[Theorem~1.4]{cyr_kra_2}, both for the two-sided case. 
In both of these  articles, the condition of sublinear complexity is relaxed
($\liminf p_X(n)/n$, rather than
$\limsup p_X(n)/n$, is  required to be finite).
Moreover, Cyr and Kra generalise this theorem to  non-minimal shifts. 
We prove a related one-sided result. Also, 
under the assumption of sublinear complexity, our bound gives sharper 
bounds in some cases, but requires finer
information about the branch points or asymptotic 
orbits. We give a case where the bound is tighter in Example \ref{ex:sharper}.
We also mention Olli's work  \cite{olli}, and
also Salo and T\"{o}rm\"{a}'s  \cite{salo_torma}.

In Section \ref{sec:examples_and_bounds}, we use results in the literature 
to give bounds on the orders of elements in 
$\mbox{Aut}(X,\sigma)/\langle  \sigma \rangle$ for known families 
of shifts with sublinear complexity.  We also formulate a notion of 
one-sided  coalescence in Theorem \ref{thm:one_sided_coalescence}:
Example \ref{example_three} 
tell us that not every endomorphism of a one-sided shift is an automorphism 
composed with a shift.
 
The proof of Theorem \ref{thm:main_result} is not constructive. Indeed,  
for an arbitrary shift with sublinear complexity,  
it does not seem to be  meaningful 
to ask for  an algorithm that computes its automorphism group: it is not 
clear what sort  of data it would take as input.  Nevertheless the 
question of whether there exists an algorithm to compute 
$\mbox{Aut}(X,\sigma)$ makes sense for  substitution shifts. 
In Section \ref{sec:coincidences}, we focus on this problem  for the 
family of  primitive constant  length $r$ substitutions. There we assume that our 
substitution $\theta$ is primitive, constant-length,
and generates an infinite shift. 

We reprove Theorem \ref{thm:main_result} for these shifts, 
obtaining finer information. 
Our proof techniques are a generalisation of Coven's strategy in 
\cite{coven_automorphism} for constant-length substitutions
on a two-letter alphabet. There, 
the fact that these shifts  are a somewhere one-to-one extension of their 
maximal equicontinuous factor $\Z_r$ was used to associate to 
each automorphism a ``fingerprint"  in $\Z_r$  which must satisfy 
certain constraints: we re-state this technique in  Theorem \ref{thm:ethan}. 

We show that for the case of larger alphabets a similar strategy works.
Having transferred our search to 
$\Z_r$, we prove in Lemma \ref{lem:soficsum} that the constraints 
mentioned above imply that the
fingerprints of  elements of $\mbox{Aut}(X_\theta,\sigma)$ are rational numbers
with small denominators.
We use the arithmetic properties of odometers to show in 
Corollary \ref{cor:cyclic} and the note that follows it, that  
the automorphism groups 
of a large family of  Toeplitz shifts, including those
generated by constant-length substitutions with a coincidence, with 
{\em height} one are cyclic. The same arithmetic properties imply that 
these shifts have trivial one-sided automorphism groups
(Theorem \ref{thm:trivial_one_sided_group}).
A technical detail is that not every rational that satisfies the desired 
constraints is the fingerprint of some automorphism in $\mbox{Aut}(X_\theta,\sigma)$. 
In Propositions \ref{prop:radius_two} and \ref{prop:last_nail_lemma}, we find 
checkable conditions for the existence of an automorphism.
 Our arguments 
culminate in Theorem \ref{thm:last_nail}, where we describe 
an algorithm to compute
$\mbox{Aut}(X_\theta,\sigma)$. We illustrate our constructive 
methods with some examples, 
including one where $ \langle \sigma \rangle \subsetneq \mbox{Aut}(X_\theta,\sigma)$.

With some minor modifications, we can also 
describe  in Section \ref{subsec:conjugacy_computation} an algorithm 
to compute the set of conjugacies between 
two shifts generated by constant-length substitutions. 
We note that Durand and Leroy 
\cite{DL} have communicated to us that they have  a 
result for arbitrary primitive substitutions. 
Coven, Dekking and Keane \cite{CDK}
have an alternative proof of the decidability
of topological conjugacy of constant-length substitution shifts. When we 
started the project, their decision procedure for testing
conjugacy was shown to 
terminate for many, but not all constant-length substitutions. They have since 
proved that their procedure always terminates.

We remark that there is a common thread between the arguments in Section
\ref{sec:sublin} and Section \ref{sec:coincidences}, namely that in both
cases we study an equivalence relation on the points of $X$ (asymptotic
equivalence and having a common image in the maximal equicontinuous
factor respectively). While most equivalence classes are small (of size 1
and $c$, the {\em column number}, respectively), the larger equivalence classes
have to be mapped to themselves under automorphisms, leading to 
restrictions on the collection of possible automorphisms.

\newpage 

\section{Automorphisms of minimal shifts with sublinear complexity}
\label{sec:sublin}
 
\subsection{Notation} \label{subsec:notation}
Let $\mathcal A$ be a finite alphabet, with the discrete topology, 
and let $\N_0 = \{0,1,2,\ldots \}$.
We endow $\mathcal A^{\N_0}$ and  $\mathcal A^\Z$ 
with the product topology, and let 
$\sigma:\mathcal A^{\N_0} \rightarrow \mathcal A^{\N_0}$ 
(or  $\sigma:\mathcal A^\Z \rightarrow \mathcal A^\Z$) 
denote the shift map.
We consider only infinite  minimal shifts $(X,\sigma)$, which can be either 
one- or two-sided.
An {\em endomorphism} of $(X, \sigma) $ is a map $\Phi:X\rightarrow X$ 
which is continuous, onto, and commutes with $\sigma$; if in 
addition  $\Phi$ is one-to-one, then $\Phi$ is called an 
{\em automorphism}.  We say $\Phi$ has {\em finite order m} 
if $m$ is the least positive integer  such that
$\Phi^m$ is the identity, and that $\Phi$ is a 
{\em $k$-th root of a power of the  shift} if there exists an 
$n$ such that $\Phi^k=\sigma^n$. Let ${\mbox{Aut}}(X,\sigma)$ 
denote the automorphism group of $( X,\sigma)$. If $X$ is two-sided,  
then $\mbox{Aut}(X,\sigma)$ contains the (normal) subgroup 
generated by the shift, denoted by $\langle\sigma\rangle$.
 
The {\em language} of a minimal shift $(X,\sigma)$, denoted  
$\mathcal L_X$, is the set of all finite words that occur in points of $X$. 
We denote by $p(n)$, $n \ge 1$, the {\em complexity function\/} of
$(X,\sigma)$: $p(n)$ is the number of words (also called blocks)  
in $\mathcal L_X$ of length $n$.
A symbolic system $(X,\sigma)$ has {\em sublinear complexity}
if its complexity function is bounded by a linear function.

\subsection{ Automorphisms of minimal shifts with sublinear complexity  } 
If $(\bar X, \bar\sigma)$ is a one-sided minimal shift, let $(X, \sigma)$ 
denote the two-sided version of $(\bar X, \bar\sigma)$ i.e. $X$ consists 
of all bi-infinite sequences whose finite subwords belong to  
$\mathcal L_{\bar X}$. 
Given a two-sided shift $(X, \sigma)$ we define similarly its one-sided version.
If $(\bar X,\bar\sigma)$  is minimal, then the one-sided version of 
$( X, \sigma)$  is  $(\bar X,\bar\sigma)$  itself.
Henceforth $(\bar X, \bar\sigma)$ refers to a one-sided shift and 
$(X, \sigma)$ refers to a two-sided shift.  

For $k>1$, we say that $\bar x\in \bar X$ is a {\em branch point of order $k$ }  
if $|\sigma^{-1}(\bar x)|=k$. 
If $(X,T)$ is a minimal invertible dynamical system, we define
an equivalence relation $\sim$ on orbits in $X$ as follows: 
Two orbits $\mathcal O_{x}=\{T^n(x): n\in \Z\}$ 
and $\mathcal O_{y}=\{T^n(y): n\in \Z\}$ in $X$ are 
{\em right asymptotic}, denoted $\mathcal O_{x}\sim \mathcal O_{y}$,  
if there exists $m\in\Z$ such that 
$d(T^{m+n}x,T^ny)\to 0$ as $n\to\infty$. The right asymptotic equivalence 
class of $\mathcal O_x$ will be denoted by $[x]$. A right asymptotic
equivalence class will be said to be non-trivial if it does not consist
of a single orbit. 
Clearly in the case that
$X$ is a shift,  $\mathcal O_x$ and $\mathcal O_y$ are right
asymptotic if there exists $m$ such that $x_{m+n}=y_n$ for 
all $n$ sufficiently large. 
 
\begin{lemma}\label{lem:cassaigne} Let $(\bar X, \bar\sigma)$ be an 
infinite minimal one-sided shift and let $(X,\sigma)$ be the 
corresponding two-sided shift. If $(\bar X, \bar\sigma)$ 
has sublinear complexity, then 
$(\bar X, \bar\sigma)$ has  finitely many branch points, and 
$ \{\Orb_x\colon x\in X\} $  has 
finitely many non-trivial right 
asymptotic equivalence classes. 
\end{lemma}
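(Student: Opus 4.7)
The plan is to invoke Cassaigne's theorem, which states that sublinear complexity of $\bar X$ is equivalent to $p(n+1) - p(n)$ being bounded by some constant $C$. Together with the standard counting identity $p(n+1) - p(n) = \sum_{w \in \mathcal L_n}(\ell(w) - 1)$, where $\ell(w)$ denotes the number of letters $a$ such that $aw \in \mathcal L_X$, this bound forces at most $C$ factors of each length to be \emph{left-special} (i.e.\ to satisfy $\ell(w) \geq 2$).

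For the branch points, I would observe that $\bar x = (x_0, x_1, \ldots) \in \bar X$ satisfies $|\bar\sigma^{-1}(\bar x)| \geq 2$ if and only if every prefix $x_0 \cdots x_{n-1}$ is left-special, since a letter $a$ has $a\bar x \in \bar X$ exactly when $ax_0 \cdots x_{n-1} \in \mathcal L_X$ for all $n$. Organize the left-special factors into a tree rooted at the empty word, declaring the parent of a length-$n$ node to be its length-$(n-1)$ prefix (which is itself automatically left-special). This tree has at most $C$ nodes at each level, and branch points of $\bar X$ biject with its infinite paths, because distinct infinite paths must occupy distinct nodes at every sufficiently deep level. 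Pigeonhole then yields at most $C$ branch points.

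For the two-sided statement, I would introduce an auxiliary equivalence on $\bar X$ by $\bar u \equiv \bar v$ iff $\bar\sigma^m \bar u = \bar\sigma^n \bar v$ for some $m, n \geq 0$, and argue that the forward projection $\pi : X \to \bar X$, $x \mapsto (x_n)_{n \geq 0}$, induces a bijection between right asymptotic equivalence classes of $X$-orbits and $\equiv$-equivalence classes of $\bar X$: the routine calculation is that $\Orb_x \sim \Orb_y$ (meaning $x_{m+n} = y_n$ for large $n$ and some fixed $m$) translates into $\pi(x) \equiv \pi(y)$. Given a non-trivial right asymptotic class, I would select two distinct orbits and, after suitable shifting, produce representatives $x, y$ with $x_n = y_n$ for $n \geq 0$ but $x_{-1} \neq y_{-1}$; then $\pi(x) = \pi(y)$ admits at least the two distinct $\bar\sigma$-preimages $(x_{-1}, \pi(x))$ and $(y_{-1}, \pi(x))$, and is therefore a branch point. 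Since each branch point lies in a unique $\equiv$-class, the number of non-trivial right asymptotic classes is bounded by the number of branch points, which is finite by the first part.

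The main obstacle I foresee is verifying the correspondence between $X$-orbit asymptotic classes and $\equiv$-classes of $\bar X$, since this requires careful bookkeeping between the non-invertible one-sided dynamics and two-sided extensions, including handling both positive and negative shifts in the translation. Everything else reduces to Cassaigne's theorem together with elementary counting on the tree of left-special factors.
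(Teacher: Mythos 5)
Your proof is correct and takes essentially the same route as the paper's: invoke Cassaigne's theorem to bound $p(n+1)-p(n)$ by a constant $L$, conclude there are at most $L$ left-special words of each length and hence at most $L$ branch points, and then reduce the two-sided statement to the one-sided one. The only difference is level of detail: the paper simply asserts that finitely many branch points implies finitely many non-trivial right asymptotic classes, whereas you correctly spell out that reduction (shifting two asymptotic orbits to agree on coordinates $n\ge 0$ and differ at $-1$, producing a branch point in the corresponding tail-equivalence class), along with the counting identity and pigeonhole argument that the paper leaves implicit.
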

 
\begin{proof} 
We  prove that if $(\bar X, \bar\sigma)$ has sublinear complexity, then 
$(\bar X, \bar\sigma)$ has  finitely many branch points, 
as the latter implies that $ \{\Orb_x\colon x\in X\} $  has 
finitely many non-trivial right asymptotic equivalence classes. 

Define  for $n\geq 1$ the {\em complexity difference function} 
$s(n) := p(n+1)-p(n)$.   Since $(\bar X, \bar\sigma)$ has sublinear 
complexity,  $s(n)$ is bounded \cite{cas}, say $s(n)\leq L$ 
for all $n\geq 1$.  Then there are at most  $L$ words of length $n$ 
with at least two left extensions, and so there are at most $L$ 
branch points in $\bar X$.         
\end{proof}
 
Next we have  a basic lemma about automorphisms of minimal 
(not necessarily symbolic) dynamical systems. 

\begin{lemma}\label{lem:tool}
Let $(X,T)$ be a minimal dynamical system. 
\begin{enumerate}
\item  
Suppose $T$ is not invertible. 
If there is a point $x\in X$ and a finite subset $F$ of $X$
such that $\Phi(x)\in F$  for every automorphism~$\Phi$
of $(X,T)$, then $|\mbox{Aut}(X,T)|\le|F|$. In particular,
every automorphism has order at most $|F|$. 
\item  
Suppose  $T$ is invertible.  
If there is a point $x\in X$ and a finite collection~$\mathcal F$ 
of right asymptotic equivalence classes such that
$[\Phi(x)]\in \mathcal F$
for every automorphism $\Phi$ of $(X,T)$, then 
$|\mbox{Aut}(X,T)/\langle T\rangle|\le|\mathcal F|$. 
In particular, the order of each
element of $\mbox{Aut}(X,T)/\langle T\rangle$ is at most $|\mathcal F|$. 
\end{enumerate}
\end{lemma}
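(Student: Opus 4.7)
The plan is to show that in each case the hypothesis gives an injection of the relevant group (or quotient) into $F$ (respectively $\mathcal F$), with the key input being that an automorphism of a minimal system is rigid in the following sense: its value at a single point (or the right asymptotic class thereof) determines the automorphism (or its coset modulo $\langle T\rangle$). Throughout I would use the standard fact that in a minimal compact metric system the forward orbit $\{T^n x : n\ge 0\}$ of every point is dense, which follows from the return times to any open set being syndetic.

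For part~(1), I would take two automorphisms $\Phi_1,\Phi_2$ with $\Phi_1(x)=\Phi_2(x)$ and study $\Psi := \Phi_2^{-1}\circ\Phi_1$. Since $\Phi_1$ and $\Phi_2$ commute with $T$, so does $\Psi$, and $\Psi$ fixes $x$, hence fixes every $T^n x$ for $n\ge 0$; by continuity and density of the forward orbit, $\Psi = \mathrm{id}$, so $\Phi_1=\Phi_2$. Thus $\Phi\mapsto\Phi(x)$ is an injection of $\mathrm{Aut}(X,T)$ into $F$, and Lagrange's theorem bounds the order of any element by $|F|$.

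For part~(2), I would first observe that the map $\Phi\langle T\rangle \mapsto [\Phi(x)]$ is well defined on cosets because $\Phi(T^k x) = T^k\Phi(x)$ lies on the same orbit as $\Phi(x)$. The substance is injectivity: if $[\Phi_1(x)]=[\Phi_2(x)]$, there is some $m\in\mathbb Z$ with $d(T^{m+n}\Phi_1(x),\, T^n\Phi_2(x))\to 0$. Applying uniform continuity of $\Phi_2^{-1}$ on the compact space $X$ and using $\Phi_2^{-1}T = T\Phi_2^{-1}$, this rewrites as $d(T^n(T^m \Psi(x)),\, T^n x)\to 0$, where $\Psi := \Phi_2^{-1}\circ\Phi_1$. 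Setting $\Xi := T^m\Psi$, for any $w\in X$ I would pick a subsequence $n_k\to\infty$ with $T^{n_k}x\to w$ (possible by density of the forward orbit); continuity gives $\Xi(T^{n_k}x)\to\Xi(w)$, while the asymptotic estimate gives $\Xi(T^{n_k}x)\to w$, so $\Xi(w)=w$. Hence $\Xi=\mathrm{id}$, i.e.\ $\Phi_1 = \Phi_2\circ T^{-m}$, and the two cosets coincide. Injectivity of the coset map yields the bound, and Lagrange again bounds element orders.

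The main subtlety is the invertible case: one must upgrade the soft information ``the forward tails of the orbits of $\Phi_1(x)$ and $\Phi_2(x)$ agree up to a shift by $m$'' to the rigid conclusion ``$\Phi_1$ and $\Phi_2$ differ by $T^m$ on all of $X$''. The trick is to pull the asymptotic statement through the uniformly continuous map $\Phi_2^{-1}$ so that it lives \emph{inside} a single automorphism $\Xi$, and then use minimality and continuity to promote asymptotic identity along the orbit of $x$ to a global identity. Part~(1) is the degenerate non-invertible analogue in which ``right asymptotic'' collapses to equality and the argument simplifies accordingly.
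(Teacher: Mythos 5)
Your proof is correct and takes essentially the same approach as the paper's: both parts reduce to injectivity of the map $\Phi\mapsto\Phi(x)$ (respectively $\Phi\langle T\rangle\mapsto[\Phi(x)]$), proved by pushing the hypothesis along the dense forward orbit of $x$ and using continuity to upgrade asymptotic agreement to global equality. The only cosmetic difference is that you first conjugate by $\Phi_2^{-1}$ and show the single automorphism $\Xi=T^m\Phi_2^{-1}\Phi_1$ is the identity, whereas the paper directly chains the limits $\Phi(x')=\lim T^{n_i}\Phi(x)=\lim T^{n_i+m}\Psi(x)=\Psi(T^m x')$; the underlying mechanism is identical.
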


\begin{proof}
For the first part, notice that an automorphism of a minimal
dynamical system is determined by its action on a single point. 

For the second part, suppose that $\Phi$ and $\Psi$ are automorphisms 
of $(X,T)$ such that 
$\Orb_{\Phi(x)}\sim\Orb_{\Psi(x)}$ for some $x$. Let $m$ be
such that $d(T^n(\Phi(x)),T^{n+m}(\Psi(x)))\to 0$. 
Then we claim that $\Phi=\Psi\circ T^m$. For any $x'\in X$, 
let $(n_i)$ be an increasing sequence of integers so that
$T^{n_i}x\to x'$. Now we have 
\begin{align*}
\Phi(x')&=\lim_{i\to\infty} \Phi(T^{n_i}x)=\lim_{i\to\infty} T^{n_i}\Phi(x)\\
&=\lim_{i\to\infty} T^{n_i+m}\Psi(x)=\lim_{i\to\infty} \Psi(T^m(T^{n_i}x))=
\Psi(T^mx'),
\end{align*}
where for the third equality, we used that fact that $\Orb_{\Phi(x)}$
and $\Orb_{\Psi(x)}$ are right asymptotic. Hence we deduce that
an automorphism of $X$ is determined up to composition with a 
power of $T$ by the right asymptotic
equivalence class of the image of a single point. 
\end{proof}
 
The following result tells us that for  infinite minimal shifts 
with sublinear complexity, 
$|\mbox{Aut}(X,\sigma)/\langle \sigma \rangle|$ is bounded by the 
number of maximal sets of mutually right asymptotic orbits.

\begin{theorem} \label{thm:main_result} 
Let $(\bar X, \bar\sigma)$ be an infinite minimal one-sided shift  
and let $(X,\sigma)$ be the corresponding 
two-sided shift.  For $k>1$, let $\bar M_k$ be the number of $k$-branch 
points in $\bar X$, and $M_k$ be the number of  
$\sim$-equivalence classes of size $k$ in $X$. Suppose that 
$(\bar X, \bar\sigma)$ has sublinear complexity, so that
both $\bar M_k$ and $M_k$ are finite. Then
\begin{enumerate}
\item
 $\mbox{Aut}(\bar X, \bar\sigma)$ has at most $ \bar M :=\min\{ \bar M_k\colon
\bar M_k>0\}$ elements, and 
\item $\mbox{Aut}(X,\sigma)/\langle\sigma\rangle$ has at most 
$M:=\min \{M_k\colon M_k>0\}$ elements. 
\end{enumerate}
\end{theorem}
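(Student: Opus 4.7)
The plan is to obtain both parts of the theorem as direct consequences of the preceding two lemmas: Lemma \ref{lem:cassaigne} supplies the finiteness of the data (branch points in the one-sided case, non-trivial right asymptotic equivalence classes in the two-sided case), and Lemma \ref{lem:tool} converts a finite target set for a distinguished point into the desired cardinality bound. The only content beyond these two inputs is the observation that a shift-commuting homeomorphism must preserve the structural invariants distinguishing these objects (the order of a branch point, and the size of a right asymptotic equivalence class), and so permutes them within strata of fixed size.

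For part (1), I would first verify that any $\Phi\in\mbox{Aut}(\bar X,\bar\sigma)$ sends $k$-branch points to $k$-branch points. This follows from the identity
\[
\bar\sigma^{-1}(\Phi(\bar x)) \;=\; \Phi\bigl(\bar\sigma^{-1}(\bar x)\bigr),
\]
valid because $\Phi$ is a bijection intertwining $\bar\sigma$, so the cardinalities match. By Lemma \ref{lem:cassaigne} the set $F_k$ of $k$-branch points is finite for every $k$. Choose $k$ minimising $|F_k|$ subject to $F_k\neq\emptyset$, so that $|F_k| = \bar M$, and pick any $\bar x\in F_k$. Then $\Phi(\bar x)\in F_k$ for every automorphism $\Phi$, and Lemma \ref{lem:tool}(1) delivers $|\mbox{Aut}(\bar X,\bar\sigma)|\le \bar M$.

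For part (2), I would argue analogously that an automorphism $\Phi$ of $(X,\sigma)$ preserves the relation $\sim$ between orbits: if $d(\sigma^{m+n}x,\sigma^n y)\to 0$ then, by uniform continuity of $\Phi$ together with $\Phi\circ\sigma = \sigma\circ\Phi$, one has $d(\sigma^{m+n}\Phi(x),\sigma^n\Phi(y))\to 0$. Since $\Phi$ descends to a bijection on orbits, it induces a bijection between the classes $[x]$ and $[\Phi(x)]$ regarded as collections of orbits, and in particular maps size-$k$ classes to size-$k$ classes. Pick $k$ achieving $M_k = M$, a point $x$ whose class has size $k$, and let $\mathcal F$ be the finite collection of all size-$k$ classes. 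Then $[\Phi(x)]\in\mathcal F$ for every $\Phi$, and Lemma \ref{lem:tool}(2) gives $|\mbox{Aut}(X,\sigma)/\langle\sigma\rangle|\le M$.

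The only point requiring any care is the bookkeeping that $\Phi$ respects the size of a right asymptotic equivalence class as a collection of \emph{orbits} rather than of points; this is immediate from $\Phi$ being an orbit bijection. I do not expect any substantive obstacle: once the two lemmas are in hand, the proof is a clean synthesis.
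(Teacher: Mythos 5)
Your proposal is correct and follows essentially the same route as the paper: automorphisms preserve the order of a branch point (resp.\ the size of a right asymptotic equivalence class of orbits), and then Lemma \ref{lem:tool} applied to the smallest non-empty stratum gives the bound; you in fact supply slightly more detail than the paper does for these preservation claims. The only item in the paper's proof that you omit is the one-line observation that, since the systems are infinite, at least one branch point (resp.\ one non-trivial class) exists, so that $\bar M$ and $M$ are well defined and positive.
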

 
\begin{proof}
Using Lemma \ref{lem:cassaigne}, $\bar X$ has finitely many branch points
and $\{\Orb_x\colon x\in X\}$ has finitely many 
non-trivial right asymptotic equivalence classes. 
The fact that the systems are infinite implies 
that $\bar X$ (respectively $X$) has at least one branch point (respectively
one non-trivial right asymptotic 
equivalence class) so that $\bar M$ and $M$ are positive and finite. 

Let $\bar M=\bar M_k$. Let $\bar x$ be a branch point of order $k$. 
An automorphism $\bar \Phi$ of 
$(\bar X, \bar \sigma)$ must map branch points of order $k$ to   
branch points of order $k$, so we apply Lemma \ref{lem:tool}(1), 
to the set of branch points of order $k$, to obtain (1).
 
Similarly, let $M=M_\ell$ and let $x$ be such that $\Orb_x$ belongs
to an equivalence class consisting of $\ell$ orbits. If $\Phi$ is an
automorphism $(X,\sigma)$, then $\Orb_{\Phi(x)}$ must also belong to
an equivalence class consisting of $\ell$ orbits. Hence by
Lemma \ref{lem:tool}(2), there are at most $M_\ell$ automorphisms
of $(X,\sigma)$ up to composition with a power of
$\sigma$.
\end{proof}
 
As $\mbox{Aut} ( \bar X, \bar \sigma)$ and  
$\mbox{Aut} ( X,\sigma)/\langle\sigma\rangle$ are both finite groups, 
the order of any element of $\mbox{Aut} (\bar X, \bar\sigma)$ divides 
$|\mbox{Aut} (\bar X, \bar\sigma)|$, and any element of  
$\mbox{Aut} (X, \sigma)$ is a $k$-th root of the shift, 
where $k$ divides $|\mbox{Aut} (X, \sigma)/\langle\sigma\rangle|$.
 
\subsection{Examples and bounds}\label{sec:examples_and_bounds}

\subsubsection{Substitution shifts.}\label{subsec:subs}
A  {\em substitution} is a map from $\mathcal A$ to the set of 
nonempty finite words on $\mathcal A$. We use concatenation to 
extend $\theta$ to a map on finite and infinite words 
from $\mathcal A$. We say that $\theta$ is {\em primitive} if there is some 
$k\in \N$ such that for any $a,a'\in \mathcal A$,
the word $\theta^k(a)$ contains at least one  occurrence of $a'$. 
By iterating $\theta$ on any fixed letter in $\mathcal A$, 
we obtain  one-sided (right) infinite points $u=u_0 \ldots $  
such that $\theta^j(u)=u$ 
for some natural $j$. 
The pigeonhole principle implies that  $\theta$-periodic points
always exist, and, for primitive substitutions, we define 
$\bar X_\theta$ to be the shift orbit closure of any one of 
these $\theta$-periodic points and call 
$(\bar X_\theta, \bar \sigma)$ a {\em one-sided substitution shift}.
Barge, Diamond and Holton  \cite{bdh} show that  a primitive 
substitution on $k$ letters has  at most $k^2$ right 
asymptotic orbits, so that we can apply Theorem \ref{thm:main_result} 
to deduce that any automorphism of $( X_\theta, \sigma)$ is a $j$-th root 
of a power the shift for some $j\leq k^2$.
 
\begin{example}\label{ex:sharper}
Let $\mathcal A = \{a_n, b_n, c_n: 1 \leq n \leq N  \}$.  
Let $W$ be a word which contains all of the letters 
in $\mathcal A$.  Define a substitution,  $\theta$, by 
$\theta(a_n) = Wa_na_n$, $\theta(b_n) = Wb_na_n$ and 
$\theta(c_n) = Wc_na_n$ for
$1\leq n \leq N$. Now the one-sided 
shift $(\bar X_\theta,\bar\sigma)$ contains $N$ branch points of 
order three, $\{  x^n: 1\leq n \leq N  \}$, where $x^n$ satisfies 
the equation $a_n \theta( x^n) = x^n$. 
Also, $\bar X_\theta$ contains only
one other branch point of order $N$: the unique right-infinite $\theta$-fixed 
point.    Similarly, in the two-sided 
shift $(X_\theta,\sigma)$, there are exactly $N$ distinct right 
asymptotic equivalence classes of size 3, and one  right 
asymptotic equivalence class of size $N$.
Now our bounds from Theorem \ref{thm:main_result} tell us 
that each of $\mbox{Aut}(\bar X_\theta, \bar \sigma)$ and 
$\mbox{Aut}( X_\theta, \sigma)/\langle \sigma \rangle$ consist of one element.

We compare our bounds to those that would be obtained using 
the results in \cite{cyr_kra_2} or  \cite{durand_petite_maass}. In the 
latter, Theorem 3.1 tells us that  $|\mbox{Aut}(X) / \langle\sigma\rangle|$ 
divides $3N+1$, the number of non-trivial right asymptotic 
equivalence classes. In the former, Theorem 1.4 tells us that 
if $\liminf p(n)/n<k$, then $|\mbox{Aut}(X) / \langle\sigma\rangle|<k$. 
If one takes the word $W=a_1b_1c_1 a_2b_2c_2  \dots a_Nb_Nc_N$, 
a crude check shows that $\liminf p(n)/n\geq 3N/2$, so that the 
upper bound for $|\mbox{Aut}(X) / \langle\sigma\rangle|$ coming from 
Theorem 1.4 of \cite{cyr_kra_2} would be at least $3N/2$.

\end{example}

\subsubsection{Linearly recurrent shifts}
\label{subsec:linrec}
The notions below are the same for one-sided and two-sided shifts, 
so we will state them only for two-sided shifts, thus avoiding the bars.
If $u,w \in \mathcal L_X$, we that $w$ is a {\em return word to $u$} 
if (a) $u$ is a prefix of ~$w$,  (b) $wu \in \mathcal L_X$, and (c) 
there are exactly two occurrences of $u$ in  $wu$. Letting  $\ell(u)$ 
denote the length of $u$, a minimal shift $(X,\sigma)$ is 
{\em linearly recurrent} if there exists a  constant $K$ such 
that for any word $u \in \mathcal L_X$
and any return word (to $u$) $w$,  $\ell(w)\leq K\ell(u)$.  
Such a ~$K$ is called a {\em recurrence constant.} 
Durand, Host and Skau  show that  
if $(X,\sigma)$ has linear recurrence 
constant $K$, then its complexity is bounded above by $Kn$ for large 
$n$ \cite[Theorem~23]{dhs}. 
Cassaigne \cite{cas} shows that if 
$p(n)\leq Kn+1$, then  there are at most $2K(2K+1)^2$ branch points 
and asymptotic orbits, and we can now apply Theorem 
\ref{thm:main_result} to bound $|\mbox{Aut}(\bar X, \bar \sigma)|$ and 
$|\mbox{Aut}( X, \sigma)/\langle\sigma\rangle|$ above by   $2(K+1)(2K+3)^2$.   

Durand \cite[Corollary~18]{fdlr} shows that for linearly recurrent 
two-sided shifts, any endomorphism is an 
automorphism.  The corresponding one-sided 
version of this is:
\begin{theorem}\label{thm:one_sided_coalescence}
Let $(\bar X,\bar \sigma)$ be an infinite, minimal,   
linearly recurrent one-sided 
shift  with recurrence constant $K$.  Then every endomorphism of 
$(\bar X, \bar \sigma )$ is a $k$-th root of a power of the shift 
for some  positive 
$ k\leq   2(K+1)(2K+3)^2$.
\end{theorem}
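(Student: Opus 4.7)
The plan is to extend $\bar\Phi$ to an endomorphism $\Phi$ of the two-sided shift $(X,\sigma)$, apply Durand's coalescence result there to promote $\Phi$ to an automorphism, and then read off the desired exponent from the bound already established in Section \ref{subsec:linrec}. First, by the one-sided Curtis-Hedlund-Lyndon theorem, $\bar\Phi$ is realised by a block code $\bar\Phi(\bar x)_i=f(\bar x_i,\bar x_{i+1},\ldots,\bar x_{i+r})$ with only nonnegative offsets, and the same formula defines a map $\Phi\colon\mathcal A^\Z\to\mathcal A^\Z$. Continuity and commutation with $\sigma$ are immediate, $\Phi(X)\subseteq X$ follows from $\mathcal L_X=\mathcal L_{\bar X}$ together with $\bar\Phi(\bar X)\subseteq\bar X$, and $\Phi(X)=X$ follows from minimality of $(X,\sigma)$, since $\Phi(X)$ is a nonempty closed $\sigma$-invariant subset. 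By construction $\pi\circ\Phi=\bar\Phi\circ\pi$, where $\pi\colon X\to\bar X$ is the projection onto nonnegative coordinates.

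Since $(X,\sigma)$ inherits linear recurrence with the same constant $K$ from $(\bar X,\bar\sigma)$, Durand's theorem \cite[Corollary 18]{fdlr} forces $\Phi$ to be an automorphism of $(X,\sigma)$. Theorem \ref{thm:main_result}, combined with the Cassaigne estimate recalled in Section \ref{subsec:linrec}, then produces a positive integer $k\le 2(K+1)(2K+3)^2$ and an integer $n$ with $\Phi^k=\sigma^n$.

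The step that I expect to be the main obstacle is controlling the sign of $n$: one needs $n\ge 0$ in order for $\bar\sigma^n$ to make sense on the one-sided shift. Suppose for contradiction that $n<0$. Then $\Phi^k\circ\sigma^{|n|}=\mathrm{id}_X$; applying $\pi$ to both sides and using $\pi\circ\sigma=\bar\sigma\circ\pi$ and $\pi\circ\Phi=\bar\Phi\circ\pi$ gives $\bar\Phi^k\circ\bar\sigma^{|n|}=\mathrm{id}_{\bar X}$ on $\pi(X)=\bar X$. In particular $\bar\sigma$ would have a left inverse, hence be injective on $\bar X$, contradicting the existence of a branch point in $\bar X$. (The latter is guaranteed because $(\bar X,\bar\sigma)$ is infinite, minimal and has sublinear complexity by Durand-Host-Skau, so the branch-point count $\bar M$ from Theorem \ref{thm:main_result} is strictly positive.) Therefore $n\ge 0$, and projecting $\Phi^k=\sigma^n$ through $\pi$ yields the desired identity $\bar\Phi^k=\bar\sigma^n$, showing that $\bar\Phi$ is a $k$-th root of a power of the shift with $k$ in the stated range.
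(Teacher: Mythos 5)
Your proposal follows the paper's own proof exactly: extend $\bar\Phi$ to an endomorphism $\Phi$ of the two-sided shift $(X,\sigma)$, invoke Durand's coalescence result \cite[Corollary~18]{fdlr} to conclude $\Phi$ is an automorphism, and apply the Cassaigne bound through Theorem~\ref{thm:main_result} to obtain $\Phi^k=\sigma^n$ with $k\le 2(K+1)(2K+3)^2$, then project back to the one-sided shift. Your extra step ruling out $n<0$ (via the non-injectivity of $\bar\sigma$ on an infinite minimal one-sided shift) is a correct and welcome refinement of a point the paper's one-line conclusion ``so $\bar\Phi^k=\bar\sigma^n$'' passes over silently.
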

\begin{proof}
Any endomorphism $\bar\Phi$ of  $(\bar X,\bar\sigma)$ defines an endomorphism 
$ \Phi$ of $( X, \sigma)$, which must be an automorphism by 
\cite[Corollary~18]{fdlr}. By Cassaigne's result, 
$\Phi^k =\sigma^n$ for some positive $k \leq    2(K+1)(2K+3)^2     $
and some $n\in \Z$, so $ \bar\Phi^k =\bar\sigma^n$.
\end{proof}

In Example \ref{example_three} we describe a non-trivial 
endomorphism of $(\bar X, \bar \sigma)$ 
which is not a power of the shift. 

\section{Constant length substitutions}\label{sec:coincidences}

Let $\theta$ be a substitution on the alphabet $\mathcal A$. 
The substitution $\theta$ has
{\em (constant) length~r}  if for each $a\in \mathcal A$, 
$\theta (a)$ is a word of length $r$.
In this section we generalise the results of Coven in \cite{coven_automorphism} 
to primitive constant-length substitutions on a finite alphabet. 
We then provide algorithms to compute the automorphism 
group of a constant-length substitution shift, and also the set of 
conjugacies between two constant-length substitution shifts.

\subsection{The $r$-adic integers}
Let $\Z_r$ denote the set of $r$-adic integers, identified with the
one-sided shift space consisting of sequences $(x_n)_{n\ge 0}$ with
the $x_n$'s taking values in $\{0,1,\ldots,r-1\}$. We think of 
these expansions as being
written from right to left, so that $\Z_r$ consists of left-infinite sequences
of digits, $x_0$ is the rightmost digit of $x$, and
so that addition in $\Z_r$ has the carries 
propagating to the left in the usual way.
Formally $\Z_r$ is the inverse limit of rings 
$\Z/r^n\Z$, and is itself a ring.

We make no assumption
about the primality of $r$. 
If $r$ is not a prime power, then $\Z_r$ has zero divisors.
Nevertheless, if $k\in \Z$   and 
 $\gcd(k,r)=1$, then multiplication by $k$
is an isomorphism of $\Z_r$ as an additive group, so that one can make 
sense of expressions such as $m/k$ for $m\in\Z_r$ if $\gcd(k,r)=1$. 
If $k\in \Z$ and $\gcd(k,r)>1$, then multiplication by $k$ is still 
an injection of $\Z_r$.
A version of the standard argument shows that if $m$ and $k$ are integers
with $\gcd(k,r)=1$, then 
$m/k$ has an eventually periodic digit sequence; and conversely 
any point of $\Z_r$
with an eventually periodic digit sequence can be written 
in the form $m/k$ for $m$
and $k$ in $\Z$ with $\gcd(k,r)=1$. We may naturally think of $\Z$ 
as a subset of $\Z_r$: a point  $s$ in $\Z_r$ 
whose digits are eventually  
all 0's or eventually all $(r-1)$'s is the $r$-adic expansion of an integer:
we write $s\in \Z$. Note also that as an additive group, $\Z_r$ is torsion free.

We need the following fact.

\begin{lemma}\label{lem:divr}
Let $kt\in\Z$ for some $t\in\Z_r$ and $k\in\Z\setminus\{0\}$.
Then there exists $m|k$ such that $mt\in\Z$ and $\gcd(m,r)=1$. 
\end{lemma}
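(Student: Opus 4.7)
My plan is to prove the lemma by peeling off shared prime factors of $k$ and $r$, one at a time. The key is a sub-lemma: if $k'\in\Z\setminus\{0\}$, $k't\in\Z$ for some $t\in\Z_r$, and $p$ is a prime dividing both $k'$ and $r$, then $(k'/p)t\in\Z$. Given this, I would iterate: start with $k_0=k$, and at each stage pick any prime $p_i\mid\gcd(k_i,r)$ and set $k_{i+1}=k_i/p_i$. Since $|k_i|$ strictly decreases, the process halts at some $k_n$ with $\gcd(k_n,r)=1$. Setting $m=k_n$ gives a divisor of $k$ coprime to $r$ with $mt\in\Z$, as required.

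To prove the sub-lemma, let $N=k't\in\Z$. Because $p\mid r$, the composition $\Z_r\to\Z/r\Z\to\Z/p\Z$ is a well-defined ring homomorphism, and it agrees on $\Z\subset\Z_r$ with the usual reduction mod $p$. Since $p\mid k'$, computing in $\Z_r$ gives $N\equiv k't\equiv 0\cdot t=0\pmod{p}$, which forces $p\mid N$ in $\Z$; write $N=pN'$ with $N'\in\Z$. Then in $\Z_r$ we have
\[
p\cdot\bigl((k'/p)t-N'\bigr)\;=\;k't-N\;=\;0.
\]
Since $\Z_r$ is torsion-free as an additive group (noted in the text just before the lemma), cancellation of $p$ yields $(k'/p)t=N'\in\Z$, completing the sub-lemma.

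The main obstacle I anticipate is this cancellation: because $p\mid r$, the element $p$ is not a unit in $\Z_r$, so one cannot literally ``divide by $p$'' there, and this is the whole reason the lemma is nontrivial. The correct route is to first upgrade $p\mid N$ from a congruence in $\Z_r$ to a genuine divisibility in $\Z$ (so that $N'=N/p$ is an honest integer), and only then invoke torsion-freeness to strip off the common factor of $p$ from both sides of an equation in $\Z_r$. Once the sub-lemma is in hand, the iterative reduction to the desired $m$ is essentially mechanical, and the termination is immediate from the strict decrease of $|k_i|$.
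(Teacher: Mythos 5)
Your proof is correct, but it takes a genuinely different route from the paper's. The paper argues in a single step: write $k=lm$ with $\gcd(m,r)=1$ and $l\mid r^n$, set $s:=mt$, note that $r^ns=(r^n/l)\,kt\in\Z$, and then use the digit description of $r^n\Z_r$ (elements with at least $n$ trailing zeros) to conclude that dividing the integer $r^ns$ by $r^n$ lands back in $\Z$ --- no induction is needed. You instead peel off one shared prime at a time, and your key mechanism is different: the ring homomorphism $\Z_r\to\Z/r\Z\to\Z/p\Z$ upgrades the congruence $k't\equiv 0 \pmod p$ to genuine divisibility $p\mid N$ in $\Z$, after which torsion-freeness of $\Z_r$ cancels $p$. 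What each buys: the paper's one-shot factorization is shorter, but its final step (``dividing by $r^n$ removes these 0's'') silently relies on exactly the fact your homomorphism argument makes explicit, namely that an integer lying in $r^n\Z_r$ is divisible by $r^n$ in $\Z$; your prime-by-prime version isolates that point cleanly and never needs the trailing-zeros picture, at the cost of an iteration. Your cautionary remark --- that $p$ is not a unit in $\Z_r$, so one must first obtain honest integer divisibility before cancelling --- is precisely the subtlety both proofs must handle, and both handle it by the same two-stage device of divisibility in $\Z$ followed by injectivity of multiplication by a nonzero integer on $\Z_r$.
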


\begin{proof}
Let $k=lm$, where $\gcd(m,r)=1$ and $l|r^n$. Let $p:=r^n/l$ and $s:=mt$. 
Now $r^ns=pkt\in\Z$.
However, $r^n s \in r^n\Z_r$, which
is the set of elements of $\Z_r$ with at least $n$ trailing 0's.
Dividing by $r^n$ removes these 0's, showing
that $s\in\Z$ as required.
\end{proof}

\begin{lemma}\label{lem:cyclic}
Let $H$ be a subgroup of $\Z_r$ containing $\Z$ generated by 
a collection of elements $(p_i/q_i)_{i=1}^n$ with $\gcd(p_i,q_i)=\gcd(q_i,r)=1$.
Then $H$ is generated by a single element of the form $1/q$.
\end{lemma}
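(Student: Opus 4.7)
The plan is to reduce the generating set to something manageable, then identify a single generator via least common multiple.

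First I would use Bezout's identity on the pairs $(p_i,q_i)$. Since $\gcd(p_i,q_i)=1$, there exist integers $a_i,b_i$ with $a_ip_i+b_iq_i=1$, and hence
\[
a_i\cdot\frac{p_i}{q_i}+b_i\cdot 1 \;=\;\frac{1}{q_i}\in H.
\]
Since $H\supseteq\Z$ and conversely $p_i/q_i=p_i\cdot(1/q_i)$ is expressible using $1/q_i$ and elements of $\Z$, I can replace the original generating set and assert that $H$ is generated (together with $\Z$) by $\{1/q_1,\ldots,1/q_n\}$.

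Next, set $q=\operatorname{lcm}(q_1,\ldots,q_n)$. Since each $q_i$ is coprime to $r$, so is $q$, and in particular $1/q$ makes sense in $\Z_r$. I would then prove $H=\langle 1/q\rangle$ by showing inclusion in both directions. For $H\subseteq\langle 1/q\rangle$, each $1/q_i=(q/q_i)\cdot(1/q)$, and $q/q_i\in\Z$, so $1/q_i\in\langle 1/q\rangle$; and $1=q\cdot(1/q)\in\langle 1/q\rangle$ gives $\Z\subseteq\langle 1/q\rangle$. For the reverse inclusion, the key lemma I need is the classical fact that
\[
\gcd\!\left(\frac{q}{q_1},\ldots,\frac{q}{q_n}\right)=1,
\]
which follows because any common prime divisor $p$ of all $q/q_i$ would force $q/p$ to be a common multiple of $q_1,\ldots,q_n$, contradicting minimality of $q$. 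By Bezout again, choose integers $c_i$ with $\sum_i c_i(q/q_i)=1$; then $\sum_i c_i(1/q_i)=1/q$, which lies in $H$.

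There is no serious obstacle: the argument is essentially Bezout applied twice, once to clean up each generator $p_i/q_i$ into $1/q_i$, and once to combine the $1/q_i$'s into a single $1/q$. The only point worth double-checking is that all the rational operations involved really take place inside $\Z_r$, which is guaranteed by $\gcd(q_i,r)=1$ (hence $\gcd(q,r)=1$) and by the remarks preceding the lemma that make multiplication by integers coprime to $r$ an additive isomorphism of $\Z_r$.
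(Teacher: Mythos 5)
Your proof is correct, and it shares the same skeleton as the paper's: set $q=\operatorname{lcm}(q_1,\ldots,q_n)$, produce $1/q\in H$ via a Bezout identity, and observe that every generator (together with $1\in\Z$) is an integer multiple of $1/q$. The difference is in how the Bezout step is organized. You first normalize each generator, using $a_ip_i+b_iq_i=1$ to replace $p_i/q_i$ by $1/q_i$ (legitimate, since $\Z\subseteq H$), so that your final coprimality input is the purely denominator-side fact $\gcd(q/q_1,\ldots,q/q_n)=1$, which never mentions the $p_i$. The paper instead runs a single Bezout identity directly on the original generators: it sets $\tilde p_i:=p_i(q/q_i)$, proves $\gcd(q,\tilde p_1,\ldots,\tilde p_n)=1$ by a prime-by-prime argument (a prime power $m^\alpha\,\|\,q$ must divide some $q_i$, and then $m\nmid p_i$ and $m\nmid\tilde p_i$ since $\gcd(p_i,q_i)=1$), and divides $k_0q+\sum k_i\tilde p_i=1$ through by $q$ to land on $1/q$. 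Your two-pass version is slightly longer but more modular: the coprimality lemma it needs is a standard lcm fact, independent of the numerators, whereas the paper's one-pass version is more compact but must weave the hypothesis $\gcd(p_i,q_i)=1$ into its gcd computation. Both arguments use $\Z\subseteq H$ and the injectivity of multiplication by integers coprime to $r$ in exactly the same places, and both deliver the same conclusion $H=\langle 1/q\rangle$.
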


\begin{proof}
Let $\alpha_i=p_i/q_i$ and $q=\mbox{Icm}(q_1,\ldots,q_n)$, 
so that if $\tilde p_i:=p_i(q/q_i)$ 
we have $q\alpha_i=\tilde p_i$ for $i=1,\ldots,n$.
Note that if $m$ is a prime and $m^\alpha|q$, with $\alpha$ maximal,  
then $m^\alpha|q_i$ for some $i$, 
so that $m\nmid p_i$ and $m\nmid \tilde p_i$. 
Hence $\gcd(q,\tilde p_1,\ldots,\tilde p_n)=1$ and there 
exist integers $k_0,k_1,\ldots,
b_n$ such that $k_0q+k_1\tilde p_1+\ldots k_n\tilde p_n=1$. 
Then $k_0+k_1\alpha_1+\ldots +k_n\alpha_n=1/q\in H$ and
all of the generators are multiples of $1/q$, so that $H$ is generated by
$1/q$. 
\end{proof}

\subsection{Maximal equicontinuous factors}

If $(X,T)$ is a continuous dynamical system, we say that the dynamical system 
$(Y,S)$ is the {\em maximal equicontinuous factor  of $(X,T)$} if 
$(Y,S)$ is an equicontinuous factor of $(X,T)$ with the property 
that any other equicontinuous factor $(Z,R)$ of $(X,T)$ is a factor 
of $(Y,S)$. 
The maximal equicontinuous factor of a minimal transformation is a 
rotation on a compact monothetic topological group, that is a group $G$
for which there exists an element $a$ such that the subgroup generated
by $a$ is dense. Such a group is always abelian 
\cite{downarowicz} and we will write the group operation additively.

The systems we consider will have maximal equicontinuous factor
$(\Z_r,+1)$  or 
$(\Z_r\times \{0,\ldots,h-1\},+1)$, where the group operation 
on $\Z_r\times \{0,\ldots,h-1\}$ is 
$$
(x,i)+(y,j)=\begin{cases}
(x+y,i+j)&\text{if $i+j<h$;}\\
(x+y+1,i+j-h)&\text{otherwise,}
\end{cases}
$$
and where $r$ is not necessarily prime.

Let $\mbox{End}(X,\sigma)$ be the set of endomorphisms of $(X,\sigma)$.   
Given two shifts $(X,\sigma)$  and $(Y,\sigma)$, let $\mbox{Conj}(X,Y)$ 
denote the (possibly empty) set of topological conjugacies between  
$(X,\sigma)$  and $(Y,\sigma)$, and let $\mbox{Fac}(X,Y)$ denote the set
of factor maps from $(X,\sigma)$  to $(Y,\sigma)$.
For completeness we prove the following, which is a straightforward 
generalization of work done by Coven in  \cite[\S3]{coven_automorphism}.
 
\begin{theorem}\label{thm:ethan}  
Let  $(X,\sigma)$  and $(Y,\sigma)$ be  infinite minimal 
shifts.
Suppose that the group rotation $(G,R)$ is the maximal 
equicontinuous factor of both $(X,\sigma)$ and $(Y,\sigma)$ and let
$\pi_X$ and $\pi_Y$ be the respective factor maps.
Then there is a map $\kappa:  \mbox{Fac}(X,Y) \rightarrow G$
such that \[\pi_Y(\Phi(x))=\kappa(\Phi)+\pi_X(x) \] for all 
$x\in X$ and $\Phi  \in \mbox{Fac}(X,Y)$. Also 
\begin{enumerate}
\item
 if $(Z,\sigma)$ is another 
shift which satisfies the assumptions on $(X,\sigma)$, then
$\kappa(\Psi\circ\Phi)=\kappa(\Psi)+\kappa(\Phi)$ for 
$\Phi \in \mbox{Fac} (X,Y)$,  $\Psi \in
\mbox{Fac}(Y,Z)$, and 
\item if $\min_{g\in G}|\pi_X^{-1}(g)|=\min_{g\in G}|\pi_Y^{-1}(g)|=c<\infty$,
then
\begin{enumerate}
\item for each $\Phi\in \mbox{Fac}(X,Y)$, we have  
\[ \{ z \in \Z_{r}: |\pi_Y^{-1}(z)|>c \}\subset
\{z \in \Z_{r}: |\pi_X^{-1}(z)|>c \} + \kappa(\Phi),
\]

and 
\item 
$\kappa$ is at most  $c$-to-one. In particular, if $\pi_X$ and 
$\pi_Y$ are somewhere one-to-one, then $\kappa $ is an injection.
\end{enumerate} 
\end{enumerate}
\end{theorem}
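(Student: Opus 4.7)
The plan is to define $\kappa(\Phi)$ pointwise by $\kappa(\Phi):=\pi_Y(\Phi(x))-\pi_X(x)$ and then verify independence of $x$. Concretely, let $\psi_\Phi:X\to G$ be the continuous map $\psi_\Phi(x):=\pi_Y(\Phi(x))-\pi_X(x)$. Since each of $\Phi$, $\pi_X$ and $\pi_Y$ intertwines its shift with the rotation $R$, a short calculation gives $\psi_\Phi(\sigma x)=\psi_\Phi(x)$, so $\psi_\Phi$ is a continuous $\sigma$-invariant function on the minimal system $(X,\sigma)$ and hence constant; I take $\kappa(\Phi)$ to be this constant.

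Part (1) is immediate from two applications of the defining identity, which yield $\pi_Z(\Psi\circ\Phi(x))=\kappa(\Psi)+\kappa(\Phi)+\pi_X(x)$. For (2)(a), the key observation is that the identity $\pi_Y\circ\Phi=\kappa(\Phi)+\pi_X$ together with the surjectivity of $\Phi$ produces a surjection $\Phi:\pi_X^{-1}(z-\kappa(\Phi))\twoheadrightarrow\pi_Y^{-1}(z)$ for every $z\in G$: for $y\in\pi_Y^{-1}(z)$, any $x\in\Phi^{-1}(y)$ satisfies $\pi_X(x)=z-\kappa(\Phi)$. Hence $|\pi_X^{-1}(z-\kappa(\Phi))|\ge|\pi_Y^{-1}(z)|$, which gives (2)(a).

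For (2)(b) I would imitate the argument from Lemma~\ref{lem:tool}: a continuous, $\sigma$-commuting map on the minimal system $(X,\sigma)$ is determined by its value at a single point. Fix $g\in G$; pick $y^*\in Y$ with $|\pi_Y^{-1}(g^*)|=c$ where $g^*:=\pi_Y(y^*)$; then, using surjectivity of $\pi_X$, choose $x\in X$ with $\pi_X(x)=g^*-g$. Every $\Phi\in\kappa^{-1}(g)$ satisfies $\Phi(x)\in\pi_Y^{-1}(g^*)$, a set of size $c$, and distinct such $\Phi$'s take distinct values at $x$, so $|\kappa^{-1}(g)|\le c$. The case $c=1$ gives the somewhere one-to-one statement. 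The main conceptual point to watch is that minimality of $(X,\sigma)$ is invoked twice: once to force $\psi_\Phi$ to be constant, and once in the rigidity principle that collapses each fiber $\kappa^{-1}(g)$ into a single fiber of $\pi_Y$.
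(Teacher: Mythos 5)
Your proposal is correct and follows essentially the same route as the paper's proof: defining $\kappa(\Phi)$ as the constant value of the continuous $\sigma$-invariant function $x\mapsto\pi_Y(\Phi(x))-\pi_X(x)$ on the minimal system, deducing (1) by computing $\pi_Z(\Psi\circ\Phi(x))$ two ways, using surjectivity of $\Phi$ to push fibers of $\pi_Y$ back to fibers of $\pi_X$ for (2)(a), and bounding $|\kappa^{-1}(g)|$ by $c$ via the fact that a factor map of a minimal system is determined by its value at a single point. No gaps; your phrasing of (2)(a) as a surjection $\pi_X^{-1}(z-\kappa(\Phi))\twoheadrightarrow\pi_Y^{-1}(z)$ is just a slightly sharper packaging of the same argument.
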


\begin{proof}
First we show that given any  factor map $\Phi:X\rightarrow Y$, 
there exists $\kappa(\Phi) \in \Z_r$ such that $\pi_X(x) + 
\kappa(\Phi)= \pi_Y(\Phi(x))$. Fix  any $b\in X$, and define 
$\kappa (\Phi):= \pi_Y(\Phi(b))- \pi_X(b)$. 
Notice that $f(x)=\pi_Y(\Phi(x))-\pi_X(x)$ is a 
$T$-invariant continuous function. Since $T\colon X\to X$
is minimal, it follows that $f(x)=f(b)=\kappa(\Phi)$ for all $x\in X$.

 Thus for each factor mapping $\Phi:(X,\sigma)\rightarrow (Y,\sigma)$ 
there is a map $F:G\to G$, namely $F(g) := 
g+\kappa(\Phi)$, such that 
$\pi_Y\circ \Phi= F\circ \pi_X$. This gives us a map $\kappa: 
\mbox{Fac}(X,Y) \rightarrow G$.

With the assumptions of  (1), note that  $\pi_Z (\Psi\circ \Phi(x))= 
\kappa(\Psi\circ \Phi) + \pi_X(x)$, but also
$\pi_Z (\Psi\circ \Phi(x)) = \kappa(\Psi)+\pi_Y(\Phi(x)) =  
\kappa(\Psi) +\kappa(\Phi) + \pi_X(x)$, and (1) follows.

Suppose that $\Phi\in\mbox{Fac}(X,Y)$. Let $|\pi_Y^{-1}(z)|>c$,
so that there exist distinct $y_1,\ldots,y_{c+1}$ in $Y$ with $\pi_Y(y_i)=z$
for each $i$. Since $\Phi$ is surjective, there exist $x_1,\ldots,x_{c+1}$
with $\Phi(x_i)=y_i$, and we have $\pi_X(x_i)=z-\kappa(\Phi)$ for each $x_i$. 
Hence $|\pi_X^{-1}(z-\kappa(\Phi))|>c$, so 
$z\in \{z'\in\Z_r\colon |\pi_X^{-1}(z')|>c\}+\kappa(\Phi)$.
 
Let $t$ be in the range of $\kappa$, let
$z\in \Z_r$ satisfy $|\pi_Y^{-1}(z)|=c$ and let
$x\in\pi_X^{-1}(z-t)$. Then if $\kappa(\Phi)=t$,
we have $\Phi(x)\in \pi_Y^{-1}(z)$. Since a factor map of a minimal
system is determined by its action on a single point, we deduce
that
there are at most $c$  factor maps in $\mbox{Fac}(X,Y)$, proving (2b).
\end{proof}

\subsection{Background results on constant-length substitutions}
\label{subsec:background}
In this section, we collect some results that we use later to reduce
the case of general constant-length substitutions to cases that can
be more straightforwardly handled.

Let $\theta$ be a primitive length $r$ substitution with fixed 
point $u$, and with $(X_\theta,\sigma)$  infinite. 
The {\em height }$h=h(\theta)$ of $\theta$   is defined as 
\[
h(\theta):= \max \{n\geq 1: \gcd(n,r)=1, n | \gcd\{a: u_a=u_0 \} \}\, .
\]
If $h>1$, this means that $\mathcal A$ decomposes into $h$ disjoint 
subsets: $\mathcal A_1\cup\ldots\cup \mathcal A_h$, where a symbol from
$\mathcal A_i$ is always followed by a symbol from $\mathcal A_{i+1}$.
Such a system is a constant height suspension. 
In fact, as shown in \cite[Remark 9, Lemmas 17 and 19]{dekking},
such a system is conjugate to
a constant height suspension of another constant-length substitution.

\begin{proposition}\label{prop:height}
Let $\theta$  be a primitive,  length $r$ substitution defined on 
$\mathcal A$, such that $(X_{\theta},\sigma)$ is infinite. Then
 \begin{enumerate}
\item
$ h < |\mathcal A|$, 
\item
there is a  primitive length $r$ substitution shift $(X_{\theta'},\sigma')$ of height one such that 
$(X_{\theta}, \sigma) 
\cong(X_{\theta'}\times \{0,\ldots,h-1\},T)$ where

\begin{equation*}
 T(x,i):=
\begin{cases}
  (x,i+1)      & \text{ if  } 
0\leq  i<h-1    \\
    (\sigma'(x), 0)   & \text{ if } i=h-1
\end{cases}
\end{equation*} 
\end{enumerate}
Furthermore  $h$, $\theta'$, and the conjugacy between  $(X_{\theta}, \sigma)$ 
and $(X_{\theta'}\times \{0,\ldots,h-1\},T)$
 can be determined algorithmically.
\end{proposition}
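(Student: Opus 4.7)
The plan is to handle (1) by a pigeonhole argument and (2) by explicitly constructing $\theta'$ from the $h$-coloring of $\mathcal{A}$ induced by the height, largely following \cite[Remark~9, Lemmas~17 and~19]{dekking}. For part (1), the definition of $h = h(\theta)$ induces a partition $\mathcal{A} = \mathcal{A}_1 \cup \cdots \cup \mathcal{A}_h$ into nonempty classes with $\mathcal{A}_{i+1} = \{u_n : n \equiv i \pmod{h}\}$ for the chosen $\theta$-periodic point $u$. If $h = |\mathcal{A}|$, each $\mathcal{A}_i$ is a singleton, so $u_n$ is determined by $n \bmod h$; then $u$ is $h$-periodic and $X_\theta$ is a single finite orbit, contradicting infiniteness. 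Hence $h < |\mathcal{A}|$.

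For part (2), the plan is to set $\mathcal{B} := \{a_1 a_2 \cdots a_h \in \mathcal{L}_{X_\theta} : a_j \in \mathcal{A}_j \text{ for } 1 \le j \le h\}$, and for each $b = a_1 \cdots a_h \in \mathcal{B}$ to examine $\theta(a_1)\theta(a_2)\cdots\theta(a_h)$, a word of length $rh$. The technical heart of the argument, and the step I expect to be the main obstacle, is the position calculation showing that the letter at position $jh$ of this concatenation lies in $\mathcal{A}_1$ for every $0 \le j < r$. The ingredients are that $\theta$ maps $\mathcal{A}_i$ into $\mathcal{A}_{l_i} \mathcal{A}_{l_i+1} \cdots \mathcal{A}_{l_i+r-1}$ (indices mod $h$) with $l_i = 1 + (i-1)r \pmod{h}$, and that $\gcd(h, r) = 1$; together these force position $jh$ to lie in class $1 + jh \equiv 1 \pmod{h}$. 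The word then splits cleanly into $r$ consecutive $h$-blocks each in $\mathcal{B}$, and one defines $\theta'(b)$ to be this sequence. Primitivity of $\theta'$ is inherited from primitivity of $\theta$, constant length $r$ is by construction, and height one of $\theta'$ follows either from \cite[Lemma~19]{dekking} or from comparing maximal equicontinuous factors via the conjugacy built below.

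The conjugacy is $\pi: X_\theta \to X_{\theta'} \times \{0, \ldots, h-1\}$ defined by $\pi(x) = (y, i)$, where $i \in \{0, \ldots, h-1\}$ is the unique index with $x_{-i} \in \mathcal{A}_1$ and $y_n := x_{-i+nh}\, x_{-i+nh+1} \cdots x_{-i+(n+1)h-1}$. This is a sliding block code, so continuity and injectivity are immediate; the intertwining $\pi \circ \sigma = T \circ \pi$ is direct from the definitions; and minimality forces surjectivity. For the algorithmic content, $h$ is obtained by iterating $\theta$ on a single letter to locate a $\theta$-periodic point $u$ and then taking $\gcd\{n : u_n = u_0, \, n \le N\}$ for $N$ sufficiently large (stabilisation is guaranteed in finitely many steps); the partition $\mathcal{A}_i$ and the substitution $\theta'$ are then read off directly from $\theta$; and $\pi$ is given by a block code of explicit radius, so every output is effectively computable.
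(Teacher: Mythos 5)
The first thing to note is that the paper does not actually prove Proposition~\ref{prop:height}: it is quoted from the literature, with the paragraph preceding it pointing to \cite[Remark 9, Lemmas 17 and 19]{dekking}. So there is no internal proof to match your attempt against; your proposal is a reconstruction of the cited Dekking argument, and much of it is sound: part (1) (pigeonhole via the singleton classes), the definition of $\pi$, the intertwining $\pi\circ\sigma=T\circ\pi$, and the injectivity/minimality argument are all correct as sketched.

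However, there is a genuine gap, and it sits exactly at the step you identify as ``the technical heart.'' Your position calculation --- that $\theta$ maps $\mathcal{A}_i$ into $\mathcal{A}_{l_i}\mathcal{A}_{l_i+1}\cdots\mathcal{A}_{l_i+r-1}$ with $l_i=1+(i-1)r \pmod h$, so that $\theta(a_1)\cdots\theta(a_h)$ splits into phase-aligned $h$-blocks --- is justified by locating $\theta(a)$ inside $u$: if $a=u_n$ then $\theta(a)$ occupies positions $[rn,rn+r)$ \emph{of $u$}. That reasoning requires $\theta(u)=u$. But in this paper's setup $u$ is only assumed to satisfy $\theta^j(u)=u$ for some $j$, and primitive constant-length substitutions with $j>1$ (no letter $a$ with $\theta(a)$ beginning with $a$) certainly exist, including ones of height $h>1$. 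For such $\theta$, the word $\theta(u)$ is a different point of $X_\theta$, and the phase at which it sits is not determined by your stated ingredients (the class decomposition plus $\gcd(h,r)=1$); a priori $\theta$ could shift the $\bmod\,h$ grading by a nonzero constant, in which case the $h$-blocks of $\theta(a_1)\cdots\theta(a_h)$ starting at positions $jh$ would \emph{not} lie in $\mathcal{B}$ and $\theta'$ would be undefined. The obvious repair --- replace $\theta$ by $\theta^j$ so that $u$ is genuinely fixed --- proves the tower statement but with a pure base of length $r^j$, which is strictly weaker than the proposition's claim of a length-$r$ pure base (harmless for the paper's applications, since $\Z_{r^j}\cong\Z_r$, but not the statement as written). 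Closing this gap, i.e., showing that the phase grading can be chosen compatibly with $\theta$ itself so that the length-$r$ construction goes through, is precisely the nontrivial content of Dekking's Lemmas 17 and 19. A secondary, smaller point of the same flavour: the disjointness of the classes $\mathcal{A}_1,\ldots,\mathcal{A}_h$ (each letter occurs in a single residue class mod $h$), which both part (1) and part (2) rely on, is a lemma requiring minimality, not a consequence of the definition of $h$ alone --- though here you are on the same footing as the paper, which also asserts this decomposition without proof.
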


The substitution $\theta'$ described in Proposition \ref{prop:height} is called a 
{\em pure base} of $\theta$.
 
For every  $\Psi \in \mbox{Aut}(X_{\theta'},\sigma')$ and  $0\leq i\leq h-1$
we can define $\Psi_i \in \mbox{Aut}(X_{\theta'} \times \{0,\ldots,h-1\}, T)$ as

\begin{equation*}
\Psi_i(x,j) :=
\begin{cases}
  (\Psi(x), j+i) & \text{ if  } 
   j+i<h \\
       (\Psi(\sigma'(x)), j+i \mod h)     & \text{ if } j+i\geq h.
\end{cases}
\end{equation*}

\begin{proposition}\label{prop:automorphism_with_height}
Let $\theta$  be a primitive,  length $r$ substitution, of height $h$, and  
such that $(X_{\theta},\sigma)$ is infinite. Let $\theta'$ 
be a pure base of $\theta$.
Then $ \mbox{Aut}(X_{\theta},\sigma) =
\{   \Psi_j: \Psi \in \mbox{Aut}(X_{\theta'},\sigma') \mbox{ and }  0\leq j<h  \}.$ 
\end{proposition}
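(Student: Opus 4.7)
The forward inclusion is routine: for each $\Psi\in\mathrm{Aut}(X_{\theta'},\sigma')$ and each $j$, a direct case check (splitting on whether $i+j<h$) shows that $\Psi_j$ commutes with $T$ and that it is invertible with inverse $(\Psi^{-1})_{h-j\bmod h}$ (with a possible $\sigma'^{\pm 1}$ accounting for the carry). The content is the reverse inclusion, so I would fix $\Phi\in\mathrm{Aut}(X_\theta,\sigma)$ and, using the conjugacy from Proposition~\ref{prop:height}, transport it to an automorphism $\tilde\Phi$ of $(X_{\theta'}\times\{0,\ldots,h-1\},T)$. Write $\tilde\Phi(x,i)=(F_i(x),g_i(x))$ for continuous functions $F_i$ and $g_i$.

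The key step is to show that $g_i(x)$ is independent of $x$. For this I would invoke Theorem~\ref{thm:ethan}: the maximal equicontinuous factor of $(X_{\theta'}\times\{0,\ldots,h-1\},T)$ is the group rotation on $\Z_r\times\{0,\ldots,h-1\}$, and the factor map $\pi$ sends $(x,i)$ to $(\pi'(x),i)$ where $\pi'$ is the factor map from $X_{\theta'}$ to $\Z_r$. Thus the level sets $X_{\theta'}\times\{i\}=\pi^{-1}(\Z_r\times\{i\})$ form a clopen partition, and since $\Phi$ descends to a translation by some element $(t,j)\in\Z_r\times\{0,\ldots,h-1\}$, it must carry layer $i$ to layer $i+j\bmod h$. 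In particular $g_i(x)$ depends only on $i$ and is equal to $i+j\bmod h$.

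It remains to extract $\Psi$. Using the commutation $\tilde\Phi\circ T=T\circ\tilde\Phi$ at points $(x,i)$ with $i<h-1$, one obtains $F_{i+1}(x)=F_i(x)$ whenever $g_i<h-1$ and $F_{i+1}(x)=\sigma'(F_i(x))$ whenever $g_i=h-1$. Iterating from $i=0$ this forces
\[
F_i=\begin{cases} F_0 & \text{if } i+j<h,\\ \sigma'\circ F_0 & \text{if } i+j\ge h,\end{cases}
\]
so setting $\Psi:=F_0$ already yields $\tilde\Phi=\Psi_j$ in terms of the defining formula. Finally, evaluating the commutation relation at the single remaining edge $(x,h-1)\mapsto(\sigma'(x),0)$ gives the identity $\sigma'\circ\Psi=\Psi\circ\sigma'$, while continuity, injectivity and surjectivity of $\Psi$ are inherited from $\Phi$. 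This shows $\Psi\in\mathrm{Aut}(X_{\theta'},\sigma')$ and completes the proof. The only real obstacle is the first step — forcing the height coordinate of $\tilde\Phi(x,i)$ to be independent of $x$ — and the rest is the mechanical case analysis sketched above.
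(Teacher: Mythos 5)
Your proof is correct, and its skeleton matches the paper's: reduce to the tower $(X_{\theta'}\times\{0,\ldots,h-1\},T)$ via Proposition~\ref{prop:height}, show the height coordinate of the image point does not depend on the base point, then recover the base automorphism from commutation with $T$. The divergence is in how that key step is justified. The paper does it in one line, with no equicontinuous-factor theory: $f(x):=\pi_2(\tilde\Phi(x,0))$ is continuous and $\sigma'$-invariant (since $T^h(y,i)=(\sigma'(y),i)$ preserves layers, $f(\sigma'x)=\pi_2(T^h\tilde\Phi(x,0))=f(x)$), hence constant by minimality. You instead route through Theorem~\ref{thm:ethan} applied to the maximal equicontinuous factor $(\Z_r\times\{0,\ldots,h-1\},+1)$ of the tower. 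This works but carries extra baggage: it needs $\pi'\colon X_{\theta'}\to\Z_r$, i.e.\ Dekking's Theorem~\ref{thm:dekking}, plus the identification of the factor of the suspension, and strictly speaking Theorem~\ref{thm:ethan} is stated for shifts rather than for the tower system --- harmless, since the only part of its proof you use (that a factor map induces a translation on a common group-rotation factor) requires just minimality, and the map $(x,i)\mapsto(\pi'(x),i)$ is easily checked to be equivariant; maximality is never actually needed. What your route buys is explicitness: you read off $\kappa(\tilde\Phi)=(t,j)$, which is exactly the bookkeeping the paper later exploits for the maps $\Psi_j$ in Corollary~\ref{cor:cyclic}; what the paper's route buys is economy and self-containedness. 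Your extraction step --- the case analysis giving $F_i=F_0$ for $i+j<h$ and $F_i=\sigma'\circ F_0$ otherwise, and the commutation $\sigma'\circ\Psi=\Psi\circ\sigma'$ from the remaining edge at $i=h-1$ --- is carried out correctly and in more detail than the paper, which leaves it implicit in the phrase ``the fact $\Psi$ commutes with $T$ implies.''
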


\begin{proof}
By Proposition \ref{prop:height}, $(X_{\theta},\sigma)$ is conjugate to  
$(X_{\theta'} \times \{0,\ldots,h-1\}, T)$, and so  
$\mbox{Aut}(X_{\theta},\sigma)$ 
is group isomorphic to $\mbox{Aut}(X_{\theta'} \times \{0,\ldots,h-1\}, T)$. 
By the definition of $\Psi_i$, we see that 
$\{   \Psi_j: \Psi \in \mbox{Aut}(X_{\theta'},\sigma') \mbox{ and }  
0\leq j<h  \}\subset \mbox{Aut}(X_{\theta'} \times \{0,\ldots,h-1\},T)$.

Conversely, given $\Psi\in \mbox{Aut}(X_{\theta'} \times 
\{0,\ldots,h-1\}, T)$, 
suppose that  $\Psi(x,0) \in X_{\theta'}\times \{i\}$
for some $x\in X_{\theta'}$. Define $f(x)=\pi_2(\Psi(x,0))$, where $\pi_2$
is the projection onto the second coordinate and notice that $f$
is a continuous $\sigma'$-invariant function on $X_{\theta'}$ and hence is constant.
This, and the fact $\Psi$ commutes with $T$ implies
that $\Psi = \Psi'_i$ for some $\Psi'\in\mbox{Aut}(X_{\theta'},\sigma')$. 
\end{proof}

Proposition \ref{prop:automorphism_with_height} tells us that in order 
to compute the automorphism group of a primitive constant-length 
substitution, or, as we shall see later,  the set of conjugacies 
between two such substitutions, it is sufficient to work with  
with a pure base $\theta'$ of $\theta$. Thus, apart from  the 
statements of our main results Theorems \ref{thm:last_nail} and 
\ref{thm:last_nail_conjugacy_theorem}, and Corollary \ref{cor:cyclic}, we henceforth assume that 
our substitutions are of height one.

\vspace{1cm}
Let $\theta$ be a length $r$ substitution. We write 
$\theta (a)= \theta_0(a) \ldots \theta_{r-1}(a)$; 
with this notation we 
see that for each  $0\leq i \leq r-1$, we have a map
$\theta_i:\mathcal A \rightarrow \mathcal A$ where $\theta_i(a)$ is the 
$(i+1)$-st letter of $\theta(a)$.

Let $\theta$ have pure base $\theta'$.
We say that $\theta$ {\em has column number $c$ } if for some $k\in \N$, 
and some $(i_1, \ldots ,i_k)$,  
$|\theta'_{i_1}\circ \ldots \circ\theta'_{i_k}(\mathcal A)|=c$ and $c$ 
is the least such number. 
In particular, if $\theta$ has column number one, then we will say 
that  $\theta$ {\em has a coincidence}. If for each $0\leq i \leq r-1$ 
and $a\neq b$,  $\theta_i(a) \neq \theta_i(b)$, we say $\theta$ is 
{\em bijective}. 
Henceforth we shall be working with primitive substitutions $\theta$  
which are length $r$  and such that $(X_\theta,\sigma)$ is infinite.

The following is shown by Dekking \cite{dekking}, with partial 
results by Kamae \cite{kamae} and Martin \cite{martin}.
\begin{theorem} \label{thm:dekking}
Let $\theta$  be a primitive,  length $r$ substitution, of height one, 
and such that $(X_\theta,\sigma)$ is infinite. Then 
the maximal equicontinuous factor of $(X_\theta, \sigma)$ is $(\Z_r , +1)$.
\end{theorem}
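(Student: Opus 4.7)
The plan is to build an explicit factor map $\pi\colon X_\theta \to \Z_r$ intertwining $\sigma$ and $+1$, and then to verify its maximality among equicontinuous factors, with the height-one hypothesis entering crucially in the maximality step.

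First I construct $\pi$. Since $\theta$ is primitive, constant-length, and $(X_\theta,\sigma)$ is infinite (hence aperiodic), recognizability holds: there is some $N$ such that the positions of the $\theta$-cutting bars in any $x\in X_\theta$ are determined by an $N$-window. For constant-length substitutions this is much easier than Moss\'e's general theorem---one checks directly that the words $\{\theta(a) : a \in \mathcal A\}$ cannot ``slide'' consistently without producing a shift-periodic orbit, which is excluded because $(X_\theta,\sigma)$ is infinite. Recognizability yields a unique desubstitution $x = \sigma^{j_0}\theta(x^{(1)})$ with $0 \le j_0 < r$ and $x^{(1)}\in X_\theta$, and iterating gives a digit sequence $(j_0, j_1, \ldots) \in \Z_r$; set $\pi(x) := (j_0,j_1,\ldots)$. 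Continuity holds because $j_k$ depends only on a window of length at most $N r^k$ centred near the origin; the identity $\pi(\sigma x) = \pi(x)+1$ is immediate once $+1$ is interpreted as odometer addition; and surjectivity follows from minimality of both systems.

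Since $(\Z_r,+1)$ is equicontinuous, this shows it is an equicontinuous factor of $(X_\theta,\sigma)$. The remaining content is maximality: every equicontinuous factor $\rho\colon X_\theta\to (G,R)$ must factor through $\pi$, equivalently, $\pi(x)=\pi(y)$ must force $x$ and $y$ to be regionally proximal. If $\pi(x)=\pi(y)$, then for each $k$ the symbols $x_0$ and $y_0$ occupy the same position $j_0 + j_1 r + \cdots + j_{k-1}r^{k-1}$ inside $\theta^k$-images $\theta^k(a_k)$ and $\theta^k(b_k)$ for some $a_k, b_k \in \mathcal A$. The height-one hypothesis is precisely the combinatorial statement that the set of positions in the fixed point $u$ where $u_0$ recurs is not confined to a nontrivial coprime-to-$r$ arithmetic progression; combined with primitivity, this permits one to locate, for arbitrarily large $k$, a common letter at the relevant position in two different $\theta^k$-images that are simultaneously reachable (respectively) from $a_k$ and $b_k$ in finitely many further substitutions. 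The resulting long shift-overlaps produce the approximate coincidences witnessing regional proximality.

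The main obstacle is the maximality step, and specifically pinning down the role of height. Without height one, a genuinely larger equicontinuous factor $\Z_r \times \Z/h\Z$ appears, coming from the decomposition $\mathcal A = \mathcal A_1 \cup \cdots \cup \mathcal A_h$ into ``phase classes'', and the height is by definition the largest $h$ coprime to $r$ for which this obstruction exists. An alternative and perhaps cleaner route, pursued in \cite{kamae,martin}, is to compute the topological point spectrum directly: the eigenvalues of the Koopman operator of $\sigma$ on $C(X_\theta)$ are generated by $\{e^{2\pi i / r^n}\}_{n\ge 1}$ together with the $h$-th roots of unity, and Pontryagin duality then identifies the maximal equicontinuous factor as $\Z_r \times \Z/h\Z$; specialising to $h=1$ yields exactly $\Z_r$.
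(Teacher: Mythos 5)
The paper offers no proof of this statement at all: it is quoted from \cite{dekking} (with partial results credited to \cite{kamae} and \cite{martin}), so your proposal has to stand on its own merits. Your first paragraph is essentially sound and matches the construction the paper gives afterwards in Section \ref{subsec:transgraph}, where $\pi$ is defined via the partitions $\sigma^i\theta^n(X_\theta)$, $0\le i<r^n$; note, though, that the partition/recognizability fact you wave at is itself a nontrivial lemma (\cite[Lemma II.7]{dekking}), and that uniqueness of the desubstituted \emph{point} $x^{(1)}$ (as opposed to the digit $j_0$) requires injectivity of $\theta$, which is exactly why the paper routes through Theorem \ref{thm:injective}; without injectivity you must check your digit sequence does not depend on the choice of $x^{(1)}$. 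Your closing paragraph's ``alternative route'' via the point spectrum is precisely the proof in the sources the paper cites---but you cite it rather than carry it out, so the substance of your proposal is the regional-proximality argument for maximality. That framework is legitimate: for a minimal system the maximal equicontinuous factor is the quotient by the regionally proximal relation, so it suffices to show $\pi(x)=\pi(y)$ forces $(x,y)$ regionally proximal.

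That step, however, contains a genuine gap, and its key claim is false as formulated. Consider Thue--Morse ($\theta(0)=01$, $\theta(1)=10$): primitive, length $2$, height one, infinite shift. Complementation $x\mapsto\overline{x}$ preserves $X_\theta$ and each set $\sigma^i\theta^n(X_\theta)$, so $\pi(x)=\pi(\overline{x})$; yet $\theta^l(0)$ and $\theta^l(1)$ disagree in \emph{every} coordinate for every $l$. Consequently there is never ``a common letter at the relevant position'' in aligned $\theta^k$-images reachable by further substitutions from the two central letters, and indeed $x$ and $\overline{x}$ are not proximal: no argument that keeps both points inside their aligned substitutive images and merely shifts can ever witness regional proximality. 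The witnesses must be genuine perturbations that change the continuation across block boundaries---for Thue--Morse, one uses that both $\theta^k(01)$ and $\theta^k(11)$ lie in the language, so the points can be rerouted into a common block $\theta^k(1)$ and then synchronized by shifting $2^k$. What your argument therefore needs is a synchronization lemma for the language of the desubstituted system: from any pair of letters $(a,b)$ one can reach a common letter by admissible continuations of equal length. Deriving that from the height-one condition ($\gcd$ of the return times of $u_0$ has no divisor $>1$ coprime to $r$) together with primitivity is the actual content of Dekking's theorem; your sketch compresses it into one unproved sentence. As it stands, the proposal establishes that $(\Z_r,+1)$ is \emph{an} equicontinuous factor of $(X_\theta,\sigma)$, but not that it is the maximal one.
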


Recall the map $\kappa$ as defined in Theorem \ref{thm:ethan}.

\begin{corollary}\label{cor:cyclic}
Let $\theta$  be a primitive,
length $r$ substitution, 
such that $(X_{\theta},\sigma)$ is infinite.
\begin{enumerate}
\item Let  $\theta$ have height one.  If $\kappa$ is injective (in particular if $\theta$ has a coincidence), then
$\mbox{Aut}(X_{\theta} ,\sigma) $ is cyclic.
\item Let $\theta$ have height $h>1$ and $\theta'$ be the pure 
base of $\theta$. Suppose that $\kappa: X_{\theta'}\rightarrow\Z_r$ is injective,
so that $\mbox{Aut}(X_{\theta'} ,\sigma')=\langle \Phi \rangle $ with 
$\kappa(\Phi)=1/k$. Then
 $\mbox{Aut}(X_\theta,\sigma)$ is abelian and generated by
$T$ and $\Phi_0$ (in the above notation).
Further,  
$\mbox{Aut}(X_{\theta},\sigma)$ is cyclic if and only if $\gcd(k,h)=1$.
\end{enumerate}
\end{corollary}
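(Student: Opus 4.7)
For part (1), the plan is to exploit the homomorphism $\kappa\colon\mbox{Aut}(X_\theta,\sigma)\to\Z_r$ of Theorem \ref{thm:ethan}, whose image contains $\Z$ because $\kappa(\sigma)=1$. Under the assumption that $\kappa$ is injective, I identify $\mbox{Aut}(X_\theta,\sigma)$ with its image. (For the ``in particular'' clause, if $\theta$ has a coincidence then the column number equals $1$, so by Theorem \ref{thm:ethan}(2b) $\kappa$ is at most $1$-to-$1$, hence injective.) Theorem \ref{thm:main_result} guarantees that $\mbox{Aut}(X_\theta,\sigma)/\langle\sigma\rangle$ is finite, so for every automorphism $\Phi$ there exist integers $k,n$ with $\Phi^k=\sigma^n$, giving $k\,\kappa(\Phi)=n\in\Z$. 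Lemma \ref{lem:divr} then lets me rewrite $\kappa(\Phi)=p/q$ with $\gcd(q,r)=1$. Picking finitely many coset representatives yields a finite generating set of this form for the image, and Lemma \ref{lem:cyclic} collapses it to a single generator $1/q$, so $\mbox{Aut}(X_\theta,\sigma)$ is cyclic.

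For part (2), the plan is to use Proposition \ref{prop:automorphism_with_height} to realise every element of $\mbox{Aut}(X_\theta,\sigma)$ as some $\Psi_i$ with $\Psi\in\langle\Phi\rangle$ and $0\le i<h$. I will verify three identities by unwinding the definition of $\Psi_i$: (a) $T\Phi_0=\Phi_0 T$; (b) $T^h$ is the map $(x,j)\mapsto(\sigma'(x),j)$, which coincides with $(\sigma')_0$; and (c) $(\Phi^j)_i=T^i\Phi_0^j$ for all $j\in\Z$ and $0\le i<h$. From (c), $\mbox{Aut}(X_\theta,\sigma)=\langle T,\Phi_0\rangle$, and (a) makes it abelian. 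Since $\kappa(\Phi)=1/k$ and $\kappa$ is injective on $\mbox{Aut}(X_{\theta'},\sigma')$, we have $\Phi^k=\sigma'$, so (b) yields the relation $\Phi_0^k=(\sigma')_0=T^h$.

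To nail down the isomorphism type, I consider the surjection $\Z^2\twoheadrightarrow\mbox{Aut}(X_\theta,\sigma)$ sending $(a,b)\mapsto T^a\Phi_0^b$. Suppose $T^a\Phi_0^b=\mathrm{id}$; writing $a=qh+s$ with $0\le s<h$ and evaluating at $(x,0)$ gives $(\sigma'^q\Phi^b(x),s)=(x,0)$. Since this must hold for all $x\in X_{\theta'}$, we get $s=0$ and $\Phi^{b+qk}=\mathrm{id}$; because $\langle\Phi\rangle$ is torsion-free (it embeds in the torsion-free group $\Z_r$ via $\kappa$), this forces $b=-qk$. Hence the kernel is exactly $\langle(h,-k)\rangle$, and by Smith normal form
\[
\mbox{Aut}(X_\theta,\sigma)\;\cong\;\Z^2/\langle(h,-k)\rangle\;\cong\;\Z\oplus\Z/\gcd(h,k)\Z,
\]
which is cyclic if and only if $\gcd(h,k)=1$.

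The main obstacle is the kernel calculation in part (2): I must rule out extraneous relations beyond $\Phi_0^k=T^h$. The key leverage is that $\Phi$ has infinite order (since $\sigma'\in\langle\Phi\rangle$ has infinite order on an infinite minimal shift), so the injection $\kappa\colon\langle\Phi\rangle\hookrightarrow\Z_r$ makes the equation $\Phi^{b+qk}=\mathrm{id}$ immediately imply $b+qk=0$, closing the argument.
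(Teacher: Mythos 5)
Your proposal is correct. Part (1) follows essentially the paper's own route: finiteness of $\mbox{Aut}(X_\theta,\sigma)/\langle\sigma\rangle$ from Theorem \ref{thm:main_result} gives $\Phi^k=\sigma^n$, hence rationality of each $\kappa(\Phi)$, and Lemmas \ref{lem:divr} and \ref{lem:cyclic} collapse $\kappa(\mbox{Aut}(X_\theta,\sigma))$ to $\langle 1/q\rangle$, after which injectivity of $\kappa$ finishes; your explicit appeal to Lemma \ref{lem:divr} to get denominators coprime to $r$ just fills in a step the paper leaves tacit.

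Part (2), however, is a genuinely different argument from the paper's, and it is correct. The paper proves the two directions of the ``iff'' separately and concretely: when $\gcd(h,k)=1$ it chooses $a,b$ with $ah+bk=1$ and verifies, via a suspension picture realizing $X_{\theta'}\times\{0,\ldots,h-1\}$ as a quotient of $X_{\theta'}\times\Z$, that the single automorphism $\Psi=(\Phi^a)_b$ satisfies $\Psi^k=T$ and generates $\Phi_0$ as well; when $\gcd(h,k)=d>1$ it exhibits the explicit torsion element $\Psi=\Phi_0^{k/d}\circ T^{-h/d}$, which rules out cyclicity since the group is infinite. You instead compute a presentation: the identities $(\Phi^j)_i=T^i\Phi_0^j$, $T\Phi_0=\Phi_0 T$ and $\Phi_0^k=(\sigma')_0=T^h$ (each of which checks out against the definitions of $T$ and $\Psi_i$) give a surjective homomorphism $\Z^2\twoheadrightarrow\mbox{Aut}(X_\theta,\sigma)$, and your kernel computation --- evaluation at $(x,0)$, then $\Phi^{b+qk}=\mathrm{id}$ forcing $b=-qk$ by torsion-freeness of $\langle\Phi\rangle$ --- pins the kernel to exactly $\langle(h,-k)\rangle$, whence $\mbox{Aut}(X_\theta,\sigma)\cong\Z\oplus\Z/\gcd(h,k)\Z$. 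Your route buys more than the paper's: it identifies the full isomorphism type in all cases, settles both directions of the ``iff'' simultaneously, and proves the ``abelian and generated by $T$ and $\Phi_0$'' clause explicitly, which the paper leaves essentially implicit in Proposition \ref{prop:automorphism_with_height}. What the paper's route buys in exchange is an explicit generator $(\Phi^a)_b$ in the cyclic case, which is in keeping with its algorithmic goals.
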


\begin{proof}
Let $\theta$ be as in the statement, and first  suppose that $\theta$ has height one, so that its maximal 
equicontinuous factor is $(\Z_r,+1)$. By the discussion in 
Section \ref{subsec:subs} and Theorem \ref{thm:main_result},
$\mbox{Aut}(X_\theta,\sigma)$ is finitely
generated. Let a set of generators be $\Phi_0=\sigma,\Phi_1,\ldots,
\Phi_n$, say. Then Theorem \ref{thm:main_result} implies 
$\kappa(\Phi_0)=1$ and $\kappa(\Phi_i)=p_i/q_i$ for each $i$
for some $p_i\in\Z$ and $q_i\ge 1$. By Lemma \ref{lem:cyclic},
$\kappa(\mbox{Aut}(X_\theta,\sigma))$ is cyclic. 
The result follows from the injectivity of $\kappa$. In particular, 
since substitutions with coincidences are precisely those whose 
shifts are a somewhere one-to-one extension of  their maximal 
equicontinuous factor \cite{dekking},
we deduce by Theorem \ref{thm:main_result}
that if $\theta$ has a coincidence, then $\mbox{Aut}(X_\theta,\sigma)$ is cyclic.

If $\theta$ has height $h>1$, let $\theta'$ be the pure base of $\theta$. 
We assume that $\theta'$ has a coincidence. By the above, $\mbox{Aut}(X_{\theta'},\sigma')$
is cyclic, and generated by some $\Phi$ with $\kappa(\Phi)=1/k$ for some
$k\in\N$. Suppose that $\gcd(h,k)=1$ and let $ah+bk=1$ with $0\le b<h$. 
Then consider the automorphism $\Psi$ of 
$X_{\theta'}\times\{0,1,\ldots,h-1\}$ given 
by $\Psi(x,i)=(\Phi^a(x),i+b)$ if $i+b<h$ or 
$(\Phi^a(\sigma'(x)),i+b-h)$ otherwise (that is $\Psi=(\Phi^a)_b$ in the
previous notation). Define an equivalence relation on 
$X_{\theta'}\times\Z$ to be the transitive closure of $(\sigma'(x),i)\sim(x,i+h)$
so that $X_{\theta'}\times\{0,\ldots,h-1\}$ is a system of representatives of the 
equivalence classes. In this notation, $\Psi(x,i)=(\Phi^a(x),i+b)$. 
Notice that $\Psi^k(x,i)=(\Phi^{ak}(x),i+kb)=(\sigma'^ax,i+kb)
\sim(x,i+kb+ah)=(x,i+1)=T(x,i)$,
so that $T\in\langle
\Psi\rangle$. Since $\gcd(a,k)=1$, some power of $\Phi^a$ is of the
form $\Phi\circ {\sigma'}^m$. Hence $\Phi_0\in \langle\Psi\rangle$ also,
so that $\mbox{Aut}(X_\theta,\sigma)$ is cyclic as required.

Conversely if $\gcd(h,k)=d>1$, then let
$\Psi=\Phi_0^{k/d}\circ T^{-h/d}$. We have $\Psi\ne\mbox{Id}$,
but $\Psi^d=\mbox{Id}$. Since
infinite cyclic groups are torsion-free, we see that 
$\mbox{Aut}(X_\theta,\sigma)$ is not cyclic. 

\end{proof}

Substitutions for which $\gcd(h,k)>1$ can be constructed. For, 
starting with a (primitive, nonperiodic) substitution with a 
coincidence on two letters, its automorphism group equals 
$\Z$ \cite{coven_automorphism}. We can use the 
techniques in Example \ref{example_three} to obtain a substitution 
shift $(X_\theta', \sigma)$, the $\kappa$ values of whose automorphism 
group is $\langle\frac{1}{k}\rangle$. Now building a tower of 
height $h$ over $(X_\theta', \sigma)$ gives us the desired substitution shift.
We remark also  that Part (1) of Corollary \ref{cor:cyclic} holds, with 
essentially the same
proof, for linearly recurrent {\em Toeplitz} systems 
\cite{downarowicz} whose maximal equicontinuous factor is $\Z_r$.

A constant-length substitution is called \emph{injective} if $\theta(i)\ne
\theta(j)$ for any distinct $i$ and $j$ in $\mathcal A$. 
It will be convenient to deal with injective substitutions in what follows. 
The following theorem of Blanchard, Durand and Maass allows us to restrict 
our attention to that situation.

\begin{theorem}[\cite{BDM}]\label{thm:injective}
Let $\theta$ be a constant-length substitution such that $X_\theta$ is 
infinite. Then there exists an injective substitution $\theta'$
such that $(X_{\theta},\sigma)$ is topologically conjugate to 
$(X_{\theta'},\sigma)$. 
Further, the conjugacy and its inverse are explicitly computable, 
so that $\mbox{Aut}(X_{\theta} ,\sigma) $ can be determined algorithmically from
$\mbox{Aut}(X_{\theta'} ,\sigma) $.
\end{theorem}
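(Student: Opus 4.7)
The plan is to construct, algorithmically from $\theta$, an injective constant-length substitution $\theta'$ together with an explicit finite-radius sliding block conjugacy $(X_\theta, \sigma) \cong (X_{\theta'}, \sigma)$; the algorithmic statement about $\mbox{Aut}$ then follows by conjugating automorphisms through this block code.

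The natural approach is to pass to a higher block presentation of $X_\theta$: for a radius $R$ to be chosen, form the alphabet $\mathcal{B}_R$ of length-$R$ words in the language of $X_\theta$ and the usual $R$-block conjugacy $\beta_R \colon X_\theta \to X_\theta^{[R]}$ sending $x$ to the sequence of its overlapping $R$-blocks. A direct check shows that $\theta$ induces a well-defined length-$r$ constant-length substitution $\theta^{[R]}$ on $\mathcal{B}_R$, which generates $X_\theta^{[R]}$ and inherits primitivity from $\theta$, by sending an $R$-block $w$ to the word of $r$ consecutive $R$-blocks appearing at positions $0, 1, \ldots, r-1$ of $\theta(u)$ for any $u \in X_\theta$ extending $w$.

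The main task is to choose $R$ so that $\theta^{[R]}$ is injective, and the critical tool is Moss\'e's recognizability theorem: since $X_\theta$ is primitive and infinite, hence aperiodic, there exists a finite radius $R_0$ such that every $x \in X_\theta$ admits a unique desubstitution $x = \sigma^k \theta(x')$, with the pair $(k, x'_0)$ determined by the window $x_{[-R_0, R_0]}$. The strategy is to show that for $R$ large enough, any collision $\theta^{[R]}(w) = \theta^{[R]}(w')$ between distinct legal $R$-blocks would yield two distinct desubstitutions of a single point in $X_\theta$, contradicting recognizability. The main obstacle is that the naive block substitution $\theta^{[R]}(w)$ only determines a prefix of $w$ and not $w$ itself, so a direct collision-to-contradiction argument fails. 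One must either refine the $R$-block alphabet to encode additional context (for instance by tagging each $R$-block with its image in the maximal equicontinuous factor $\Z_r$), or follow the specific enumeration of atoms carried out in \cite{BDM}, so as to properly separate all collision cases.

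Effectivity is then routine: $R_0$ is bounded by a computable function of $\theta$, since the recognizability radius can be found by searching through length-$(2R+1)$ factors in $\mathcal{L}_{X_\theta}$ until uniqueness of the central desubstitution is witnessed, a procedure that terminates by Moss\'e's theorem; and once $R$ and the refined alphabet are fixed, the substitution $\theta'$ together with the sliding block codes $\beta_R$ and $\beta_R^{-1}$ are explicit finite data. Any automorphism of $(X_{\theta'}, \sigma)$ then pulls back to one of $(X_\theta, \sigma)$ by conjugation with the block code, establishing the claimed algorithmic transfer of $\mbox{Aut}(X_{\theta'}, \sigma)$ to $\mbox{Aut}(X_\theta, \sigma)$.
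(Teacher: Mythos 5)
Your construction cannot work as written, and the gap is exactly at the step you flag. Non-injectivity of $\theta$ means precisely that there are distinct letters $a\ne b$ with $\theta(a)=\theta(b)$, and this defect survives \emph{every} one-sided higher-block presentation: for any $R$ and any word $u$ of length $R-1$ such that $au$ and $bu$ both lie in $\mathcal L_{X_\theta}$, we have $\theta(au)=\theta(bu)$ as words, so all of their $R$-blocks coincide and $\theta^{[R]}(au)=\theta^{[R]}(bu)$ is a collision between distinct legal $R$-blocks. Hence $\theta^{[R]}$ can be injective for some $R$ only if letters identified by $\theta$ eventually have disjoint sets of legal right-extensions, i.e.\ are distinguishable by right context alone. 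That is a genuine assertion about $X_\theta$ which your proposal never establishes: the context separating $a$ from $b$ may a priori lie to the left (the inverse of the conjugacy in \cite{BDM} is a genuinely two-sided block code), and ruling this out is comparable in difficulty to the theorem itself. Nor can recognizability fill the hole: the statement you quote --- unique desubstitution $x=\sigma^k\theta(x')$ with $(k,x'_0)$ determined by a finite window --- is, for a \emph{non-injective} $\theta$, not Moss\'e's theorem; uniqueness of the cutting index $k$ is fine, but uniqueness of $x'$ is essentially the assertion that $x'\mapsto\theta(x')$ is injective on $X_\theta$, which is equivalent to the injectivity of the Blanchard--Durand--Maass merging map, i.e.\ to the very result being proved. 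So that step is circular. Finally, the two fallbacks you offer do not repair the argument: a finite block has no well-defined ``image in $\Z_r$'' (only points of $X_\theta$ do, and those images are infinite data), while ``follow the enumeration in \cite{BDM}'' is not a fix of your construction but its replacement by the cited one.

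For comparison, the paper's route is the opposite of yours, and its labour is placed elsewhere. Existence of $\theta'$ is quoted from \cite{BDM}: one \emph{merges} letters with equal $\theta$-images, iterating at most $|\mathcal A|$ algorithmically determined letter-to-letter quotient maps (the alphabet shrinks, whereas yours grows), the nontrivial point --- that this letter-to-letter factor is injective on an aperiodic $X_\theta$ --- being exactly the content of \cite{BDM}. What the paper itself proves is computability of the \emph{inverse} of this conjugacy: since $\theta\circ\tau=\theta$, computing $\tau^{-1}$ reduces to computing the desubstitution map, which is done via Moss\'e's bilateral recognizability together with computable uniform-recurrence constants from \cite{dhs}. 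In your plan this inverse problem is dismissed as routine; inverting a higher-block code is indeed just a projection, but that convenience becomes available only after the injectivity step --- the step that is missing --- has been carried out by some other means.
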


\begin{proof}
We only sketch the proof that the inverse of the conjugacy is
computable, as this is not described in \cite{BDM}. There, we see 
that $(X_{\theta},\sigma)$ is conjugate to $(X_{\theta'},\sigma)$ 
via $\tau$, which is a composition of less than $|\mathcal A|$
algorithmically-determined letter-to-letter maps, 
$\tau= \tau_1\circ \ldots \tau_N$. 
By iteration, it suffices to show that a single one of these maps
has a computable inverse, so we assume that $N=1$ and $\tau_1=\tau$.

By definition, $\theta\circ \tau=\theta$, so that to 
explicitly describe $\tau^{-1}$, we only need explicitly 
describe $\theta^{-1}$. If $\theta$ is a constant-length substitution
on $\mathcal A$, 
then there exists  $k\leq |\mathcal A|^2$ such that if
$\theta^k(a)=\theta^k(b)$, then $\theta^{k-1}(a)=\theta^{k-1}(b)$. 
Fix any letter $a$. The bilateral  {\em recognizability} of 
$\theta^k$ \cite[Definition 1.1 and Theorem 3.1]{mosse1} implies 
that there exists an $\ell\geq k$ such that $\theta^\ell (a)$ 
does not appear starting at the interior of any $\theta^k$ word.
Such an $\ell$ can be obtained by a simple algorithm. 
Uniform recurrence of $(X_{\theta},\sigma)$ implies that if  
$L$ is large enough, then any word of length $L$ in 
$\mathcal L_{X_\theta}$ contains   some $\theta^\ell (a)$ 
as a subword. Also, this $L$ is computable \cite[Proposition 25]{dhs}. 
Thus $\theta^{-1}$ can be explicitly described and the proof
of our claim is complete.
\end{proof}

\subsection{The description of $\{z \in \Z_{r}: |\pi^{-1}(z)|>c \}$}
\label{subsec:transgraph}
We continue to assume that $\theta$ is a length $r$ substitution of height one,
so that $\Z_r$ is the maximal equicontinuous factor of $(X_\theta,\sigma)$. 
We let $c$ be the column number of $\theta$. We show that all points of 
$\Z_r$ have at least $c$ preimages and most points of $\Z_r$ have
exactly $c$ preimages. We study the structure of the 
subset of $\Z_r$ having excess preimages.

If $P_0:=\theta^n(X_\theta)$, then $P_0$ generates 
a $\sigma^{r^n}$-cyclic partition of size $r^n$ \cite[Lemma II.7]{dekking}.
We use the notation of  \cite{kamae}, \cite{dekking}, using $\Lambda_{r^n}$ to 
denote the equivalence relation whose classes are the members of this cyclic 
$\sigma^{r^n}$-partition, i.e. we  write $\Lambda_{r^n}(x)=i$ if 
$x\in \sigma^{i} (P_0)$.

Using the maps $\Lambda_n $, we can define a maximal equicontinuous 
factor map $\pi:X_\theta \rightarrow \Z_r$.
Note that if  $\Lambda_n(x) = i$, then $\Lambda_{n+1}(x) \equiv i \mod r^{n}$.
Using this we can define $\pi(x):= \ldots x_2\,x_1\,x_0$ where for each 
$n\in \N$, $\Lambda_n(x)=\sum_{i=0}^{n-1} r^{i}x_i$. 

\begin{definition}\label{def_X_F} Let $\theta$ be a length $r$ 
substitution with column number $c$ and of height one.
Let $\Sigma_\theta$ be the one-sided shift on $\{0,1, \ldots, r-1 \}$, 
whose points are left-infinite, and whose set of forbidden words is 
\[ 
\mathcal F_\theta:= \{  w=w_k \ldots w_1 \in (\Z / r\Z)^+: 
|\theta_{w_1}\circ \ldots \circ\theta_{w_k}(\mathcal A)|=c   \}.
\]
\end{definition}
Note that $\mathcal F_\theta\neq \emptyset$. It can happen 
that $\Sigma_\theta=\emptyset$: this is the case for the 
Thue-Morse substitution: $\theta(0)=01, \theta(1)=10$.  

\begin{lemma}\label{lem:sofic}
Let $\theta$  be a primitive, length $r$ substitution  
on $\mathcal A$ with column number $c$, of height one, and 
such that $(X_\theta,\sigma)$ is infinite. Then 
$\Sigma_\theta$ is either empty or a sofic shift. 
\end{lemma}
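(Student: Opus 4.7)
The plan is to show that the language $\mathcal L_{\Sigma_\theta}$ of words admissible in $\Sigma_\theta$ is a regular language; combined with the standard characterization of sofic shifts (a subshift is sofic iff its language is regular), this yields the dichotomy in the statement.

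I would construct a deterministic finite automaton $\mathcal G$ whose states are the subsets $S\subseteq\mathcal A$ with $|S|>c$, together with a single absorbing \emph{dead} state; the initial state is $\mathcal A$ and every non-dead state is accepting. From a non-dead state $S$, on input $i\in\{0,1,\ldots,r-1\}$, transition to $\theta_i(S)$ if $|\theta_i(S)|>c$ and to the dead state otherwise. Reading a word $u_1u_2\ldots u_n$, where the labels are fed in the same order as the corresponding $\theta_{u_j}$ are post-composed in the definition of $\mathcal F_\theta$, then drives $\mathcal G$ to the state $\theta_{u_n}\circ\theta_{u_{n-1}}\circ\cdots\circ\theta_{u_1}(\mathcal A)$, unless this image has already dropped to size $c$ at some intermediate step, in which case $\mathcal G$ sits in the dead state.

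The main step, and the potential pitfall one must handle, is verifying that this ``single image per step'' automaton detects \emph{every} forbidden subword, not merely a forbidden prefix. Setting $S_{a,b}:=\theta_{u_b}\circ\cdots\circ\theta_{u_a}(\mathcal A)$ for $1\le a\le b\le n$, the containment
\[
S_{1,b}\;=\;\theta_{u_b}\circ\cdots\circ\theta_{u_a}\bigl(\theta_{u_{a-1}}\circ\cdots\circ\theta_{u_1}(\mathcal A)\bigr)\;\subseteq\;\theta_{u_b}\circ\cdots\circ\theta_{u_a}(\mathcal A)\;=\;S_{a,b}
\]
holds, and $|S_{a,b}|$ is non-increasing in $b$ for fixed $a$. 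Since $|S_{a,b}|\ge c$ by definition of the column number, the minimum of $|S_{a,b}|$ over all admissible pairs $(a,b)$ is attained at $(1,n)$; hence $|S_{1,n}|>c$ is equivalent to $|S_{a,b}|>c$ for every $1\le a\le b\le n$. Therefore $u_1u_2\ldots u_n$ contains no subword in $\mathcal F_\theta$ precisely when $\mathcal G$ accepts it, and the forbidden-subword-free language is regular.

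Finally, $\mathcal L_{\Sigma_\theta}$ is obtained from this language by restricting to those words that arrive at a state from which $\mathcal G$ admits an infinite forward accepting walk (a decidable property of each state, determined by a simple graph search on the transition digraph of $\mathcal G$). This restriction preserves regularity, so $\mathcal L_{\Sigma_\theta}$ is regular, and $\Sigma_\theta$ is sofic if non-empty and empty otherwise.
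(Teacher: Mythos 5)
Your automaton and your monotonicity argument are correct, and they amount to the paper's own construction read backwards: the paper's labelled graph has the image sets of cardinality greater than $c$ as vertices and an edge from $\theta_i(A)$ to $A$ labelled $i$, i.e.\ it is exactly the reversal of the transition digraph of your $\mathcal G$, and your observation that $\min_{a\le b}|S_{a,b}|=|S_{1,n}|$ is the same point the paper uses to pass between path labels and images of the full alphabet. The genuine gap is in your final paragraph. Your machine reads a word with each new symbol's map post-composed, i.e.\ it reads a segment of a point of $\Z_r$ from the high-index end down towards index $0$; but points of $\Sigma_\theta$ are \emph{left}-infinite, so they are infinite at the high-index end and terminate at index $0$. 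Hence $u\in\mathcal L_{\Sigma_\theta}$ if and only if $u$ can be \emph{preceded} by arbitrarily long words keeping the run of $\mathcal G$ alive (extension against the reading direction, which K\"onig's lemma then upgrades to a left-infinite extension), whereas your condition asks that the run can be \emph{continued} after $u$, which corresponds to extending the point below index $0$. These are not equivalent conditions; this is exactly why the paper orients its edges from $\theta_i(A)$ to $A$ and insists that infinite paths be ``listed in reverse order.''

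The two conditions genuinely differ. Take $\theta(1)=13$, $\theta(2)=23$, $\theta(3)=14$, $\theta(4)=24$: this substitution is primitive, of length $2$ and height one, has column number $c=1$, and $X_\theta$ is infinite (periodicity of the fixed point would force a strictly smaller period via the injective letter-to-block recoding). Here $\theta_0(\mathcal A)=\{1,2\}=\theta_0(\{1,2\})$, $\theta_1(\mathcal A)=\{3,4\}$, $\theta_0(\{3,4\})=\{1,2\}$, while $\theta_1(\{1,2\})=\{3\}$ and $\theta_1(\{3,4\})=\{4\}$. Since $\theta_1\circ\theta_0(\mathcal A)$ and $\theta_1\circ\theta_1(\mathcal A)$ are singletons, a digit $1$ with \emph{any} digit in the next higher coordinate creates a forbidden word, so no point of $\Sigma_\theta$ contains the digit $1$ at all: $\Sigma_\theta=\{\overline{0}\}$ and $\mathcal L_{\Sigma_\theta}=\{0^k\colon k\ge 0\}$. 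Yet the word $1$ passes your test: it drives $\mathcal G$ from $\mathcal A$ to the live state $\{3,4\}$, and $\{3,4\}\to\{1,2\}\to\{1,2\}\to\cdots$ (reading $0$'s) is an infinite forward accepting walk. So the language you prove regular strictly contains $\mathcal L_{\Sigma_\theta}$, and the criterion ``a subshift is sofic iff its language is regular'' has not been verified for the correct language. The fix must go in the other direction: either characterize $\mathcal L_{\Sigma_\theta}$ as the set of words whose run stays alive when started from some state that is reachable, inside the live digraph of states reachable from $\mathcal A$, \emph{from a cycle} (again a regular condition, and the correct one), or drop the language computation altogether and, as the paper does, present $\Sigma_\theta$ directly as the reversed label sequences of infinite paths in the reversed graph.
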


\begin{proof}
Let $S=\{\mathcal A\}\cup 
\{\theta_{w_1}\circ\cdots\circ\theta_{w_k}(\mathcal A)\colon k\ge 1;\;
w_1,\ldots,w_k\in \{0,1,\ldots,r-1\}\}$ and 
$V=\{A\in S\colon |A|>c\}$. This is the vertex set of a directed labelled graph. 
If $A\in V$ and $\theta_i(A)\in V$, then we add an edge from $\theta_i(A)$ to $A$
labelled with the symbol $i$. We claim that the set $\Sigma_\theta$ 
is the set of left-infinite
one-sided sequences in $(\Z/r\Z)^\N$ such that for every $m<n$ and
finite segment,
$w_{n-1}w_{n-2}\ldots w_m$, 
the sequence $(w_m,w_{m+1},\ldots w_{n-1})$ is the
sequence of labels 
of a path in the graph. To see this, notice that $(w_m,w_{m+1},\ldots,w_{n-1})$
is the sequence of labels of a path in the graph if and only if there
are elements $A_m,A_{m+1},\ldots,A_n$ of $V$ such that 
$\theta_{w_k}(A_{k+1})=A_k$ for each $m\le k<n$, so that
$|\theta_{w_m}\circ\theta_{w_{m+1}}\circ\cdots \theta_{w_{n-1}}(A_n)|
=|A_m|>c$ and hence 
$|\theta_{w_m}\circ\theta_{w_{m+1}}\circ\cdots \theta_{w_{n-1}}(\mathcal A)|>c$.

The reason for the reversal of the order of the
indices is that 
we want to maintain compatibility with standard definitions of sofic: 
$\Sigma_\theta$ is a collection of left-infinite sequences, which 
is precisely the collection of edge-labellings of
infinite paths in the graph \emph{listed in reverse order}.

\end{proof}

\begin{example}\label{example_0}\cite{coven_automorphism}.
If $|\mathcal A|=2$, then any primitive length $r$ substitution generating 
an infinite shift has height one, by Proposition \ref{prop:height}, 
and for each $i$, 
$\theta_i(\mathcal A)=\mathcal A$ or $|\theta_i(\mathcal A)|=1$. If $\mathcal C$ 
is the set of indices $i$ such that 
$|\theta_i(\mathcal A)|=1$, we see that $\mathcal F_\theta = \{
w=w_1 \ldots w_k \in (\Z/ r\Z)^+: w_i \in \mathcal C \text{ for some } 
i\in \{1, \ldots , k \} \}$. 
In this case $\Sigma_\theta$ is the full shift on the alphabet   
$(\Z/ r\Z) \backslash \mathcal C$. 
\end{example}

\begin{example}\label{example_one}
Let $\mathcal A = \{a,b,c \}$, and let $\theta (a)=abbc$, $\theta(b)=cbab$ and
$\theta(c)=cbba$; $\theta$  has a coincidence and is of  height one. The 
labelled graph that generates $\Sigma_\theta$
 is shown in Figure \ref{figure_1}.
\end{example}

\begin{figure}[h]
\begin{tikzpicture}[->,>=stealth',shorten >=1pt,auto,node distance=2.8cm,
                    semithick] 
 \tikzstyle{every state}=[fill=white,draw=black,text=black]
  \node[state]         (B)  {$\{ a,b,c\}$};
    \node[state]         (D) [below of=B]       {$\{a,b \}$};
     \node[state] (A)  [left of=D]                  {$\{a,c \}$};
 \node[state]         (C) [ right of=D] {$\{b,c\}$};
  \path (A) edge              node[pos=0.3] {0} (B)
     edge              node {0} (D)
        (B) edge [loop above] node{3} (B)
           (D) edge   [bend left=5]  node{2,3} (C)
                   (D) edge [loop below] node {2} (D)
              edge node{2} (B)
        (A) edge [loop left]  node {0,3} (A)
                      (C) edge    [bend left=5]      node{3} (D);
                      \end{tikzpicture}
\caption{\label{figure_1}The graph for Example \ref{example_one}.}
\end{figure}
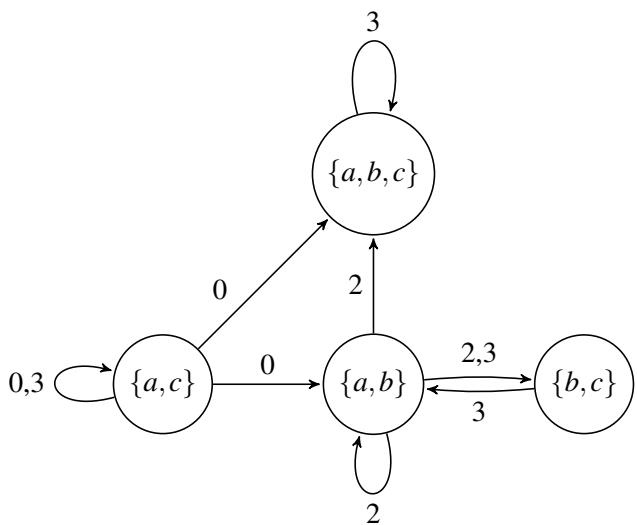

Depending on $\theta$, we may have to slightly alter the sofic 
shift that we work with. We write $\overline a$ to denote $\ldots aaa$. 

\begin{definition}\label{def_Y_F}

Let $\theta$ be a length $r$ substitution with column number $c$. 
Let $\Sigma_\theta$ be as above. 
Let $P=\{x\in X_\theta\colon\theta^n(x)=x\text{ for some $n$}\}$, that
is the set of (two-sided) periodic points under the substitution.
\begin{equation*}
\hat\Sigma_\theta :=
\begin{cases}
    \Sigma_\theta \cup \{ \overline 0, \overline {r-1} \} & 
    \text{ if $|P|>c$}\\
    \Sigma_\theta & \text{ otherwise;}
\end{cases}
\end{equation*} 
if $\Sigma_\theta$ is sofic, then  $\hat\Sigma_\theta$ is sofic. 
We shall show below (in Corollary \ref{cor:nonempty})
that $\hat\Sigma_\theta$ is not empty and hence by the definition 
above, $\hat\Sigma_\theta$ is always sofic. Define
\[ 
\tilde\Sigma_\theta= \{ x: \sigma^n(x) \in \hat\Sigma_\theta 
\mbox{ for some } n\geq 0\}. 
\]
\end{definition}

\begin{lemma}\label{lem:description_of_points_with_several_preimages}
Let $\theta$  be a primitive,  length $r$ substitution, with column number $c$, 
of height one and such that $(X_\theta,\sigma)$ is infinite. 
Then $\tilde\Sigma_\theta= \{ z\in \Z_r: |\pi^{-1}(z)|>c\}$.
\end{lemma}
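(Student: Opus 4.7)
The plan is to prove the equality via two inclusions, after a recognizability-based reduction.

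\emph{Setup.} By Theorem~\ref{thm:injective} I may assume $\theta$ is injective on $\mathcal A$, and then Moss\'e's bilateral recognizability makes $\theta\colon X_\theta\to X_\theta$ injective. Combined with Dekking's identity $\pi\circ\theta = r\pi$, this gives a bijection $\pi^{-1}(\sigma z)\to\pi^{-1}(z)$, $y\mapsto\sigma^{z_0}(\theta y)$, hence $|\pi^{-1}(z)|=|\pi^{-1}(\sigma^n z)|$ for every $n\ge 0$. Both sides of the claimed equality are therefore invariant under replacing $z$ by a tail $\sigma^n z$, so it suffices to compare $\hat\Sigma_\theta$ with $\{z\in\Z_r:|\pi^{-1}(z)|>c\}$.

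\emph{Forward inclusion.} If $z\in\Sigma_\theta$, the nested decreasing sets $S_n(z):=\theta_{z_0}\circ\cdots\circ\theta_{z_{n-1}}(\mathcal A)$ all have $|S_n(z)|>c$ by the definition of $\mathcal F_\theta$, and their stable limit $S_\infty(z)$ does too. For each $s\in S_\infty(z)$ I pick $a_n\in\mathcal A$ with $\theta_{z_0}\circ\cdots\circ\theta_{z_{n-1}}(a_n)=s$, any $y^{(n)}\in X_\theta$ with $(y^{(n)})_0=a_n$, and set $x^{(n)}:=\sigma^{M_n}(\theta^n y^{(n)})$ with $M_n:=\sum_{i<n}r^i z_i$; then $\pi(x^{(n)})\equiv z\pmod{r^n}$ and $(x^{(n)})_0=s$, so any cluster point $x\in X_\theta$ satisfies $\pi(x)=z$ and $x_0=s$, giving $|\pi^{-1}(z)|\ge|S_\infty(z)|>c$. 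In the remaining case $z\in\{\overline 0,\overline{r-1}\}$ with $|P|>c$, a $\theta$-periodic $x$ (say $\theta^j x=x$) has $(r^j-1)\pi(x)=0$, and torsion-freeness of $\Z_r$ forces $\pi(x)=\overline 0$; thus $P\subseteq\pi^{-1}(\overline 0)$ gives $|\pi^{-1}(\overline 0)|\ge|P|>c$, and $|\pi^{-1}(\overline{r-1})|=|\pi^{-1}(\overline 0)|$ by $\sigma$-equivariance.

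\emph{Reverse inclusion.} Suppose $z\notin\tilde\Sigma_\theta$, so $\sigma^n z\notin\Sigma_\theta$ for every $n\ge 0$ and, if $|P|>c$, $z\notin\Z$ (since $\sigma^n z\in\{\overline 0,\overline{r-1}\}$ for some $n\ge 0$ is equivalent to $z$ being an integer). From $\sigma^n z\notin\Sigma_\theta$ the nonincreasing sequence $|S_k(\sigma^n z)|$, bounded below by $c$, must equal $c$ eventually, so $|S_\infty(\sigma^n z)|=c$. The map $\phi_z\colon\pi^{-1}(z)\to\mathcal A$, $x\mapsto x_0$, lands in $S_\infty(z)$, so its image has at most $c$ elements; the proof is complete once I show $\phi_z$ is injective. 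Injectivity of $\phi_z$ is the main obstacle: it legitimately fails on $\hat\Sigma_\theta$ (two $\theta$-periodic preimages of $\overline 0$ can share $x_0$), which is precisely why $\{\overline 0,\overline{r-1}\}$ had to be adjoined. My plan is by contradiction: given distinct $x,x'\in\pi^{-1}(z)$ with $x_0=x'_0$, lift as $x=\sigma^{M_n}(\theta^n y^{(n)})$ and $x'=\sigma^{M_n}(\theta^n (y')^{(n)})$, extract a subsequence along which $(y^{(n_k)},(y')^{(n_k)})\to(y,y')$ in $X_\theta^2$ and $\sigma^{n_k} z\to z^*$ in $\Z_r$ (so $\pi(y)=\pi(y')=z^*$), and propagate the recursion $y^{(n)}=\sigma^{z_n}(\theta y^{(n+1)})$ to the limit. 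The hardest step is to then use torsion-freeness of $\Z_r$ to force $z^*\in\{\overline 0,\overline{r-1}\}$ and to convert this limit statement back into $\sigma^n z\in\hat\Sigma_\theta$ for some $n$, contradicting $z\notin\tilde\Sigma_\theta$.
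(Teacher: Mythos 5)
Your setup and forward inclusion are essentially sound and close to the paper's own argument: the preimages $\sigma^{M_n}(\theta^n y^{(n)})$ realizing each letter of $S_\infty(z)$, and the torsion-freeness argument placing the $\theta$-periodic points inside $\pi^{-1}(\overline 0)$, are exactly what is needed. One caveat: you cannot invoke Theorem~\ref{thm:injective} to ``assume $\theta$ is injective,'' because the lemma is a statement about $\tilde\Sigma_\theta$, which is defined combinatorially from the column maps of $\theta$ itself; passing to the conjugate injective substitution $\theta'$ changes this set (one only gets $\mathcal F_\theta\subseteq\mathcal F_{\theta'}$, hence $\Sigma_{\theta'}\subseteq\Sigma_\theta$, and the equality $\tilde\Sigma_\theta=\tilde\Sigma_{\theta'}$ is essentially the lemma itself). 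What you actually need is injectivity of $\theta$ as a map on $X_\theta$, i.e.\ recognizability, which is also what the paper uses.

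The genuine gap is the reverse inclusion: you reduce it to injectivity of $\phi_z\colon x\mapsto x_0$ on $\pi^{-1}(z)$, but you never prove that claim --- you offer a ``plan'' whose ``hardest step'' is explicitly left open, and the plan cannot be completed as described. The hypothesis $x_0=x_0'$ says that the finite composition $\theta_{z_0}\circ\cdots\circ\theta_{z_{n-1}}$ merges the seeds $y_0^{(n)}$ and $(y')_0^{(n)}$; this concerns the digits of $z$ \emph{below} level $n$ and is destroyed when you pass to a limit of the tails $\sigma^{n_k}z$. In the limit you obtain merely two distinct points of the fibre $\pi^{-1}(z^*)$ with distinct $0$th coordinates, which is no contradiction (this happens in every fibre as soon as $c\ge 2$); and nothing forces $z^*\in\{\overline 0,\overline{r-1}\}$ --- the tails of a non-integer $z\notin\tilde\Sigma_\theta$ can accumulate anywhere in $\Z_r$, and torsion-freeness gives nothing since $y,y'$ need not be $\theta$-periodic. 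The argument that works is finite and combinatorial, and it is the paper's: since $\sigma^n z\notin\Sigma_\theta$ for every $n$, each $S_\infty(\sigma^n z)$ has exactly $c$ elements; $\theta_{z_0}\circ\cdots\circ\theta_{z_{n-1}}$ maps $S_\infty(\sigma^n z)$ onto $S_\infty(z)$, hence bijectively (a surjection between $c$-element sets), so $y_0^{(n)}=(y')_0^{(n)}$, so $x$ and $x'$ carry the same block $\theta^n(y_0^{(n)})$ on $[-M_n,\,r^n-1-M_n]$; these intervals exhaust $\Z$ precisely because $z\notin\Z$, forcing $x=x'$. (The paper phrases this as: there are at most $c$ possible blocks on each such interval, hence at most $c$ preimages.) Finally, note that this argument genuinely requires $z\notin\Z$: you never treat the case $z\in\Z$ with $|P|\le c$, where the intervals grow only to one side and one must instead identify $\pi^{-1}(z)$ with a translate of $P$, getting $|\pi^{-1}(z)|=|P|\le c$ directly --- which is exactly why the paper splits its proof into the cases $z\in\Z$ and $z\notin\Z$ from the outset.
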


\begin{proof}
We separate the proof into two parts: $z\in\Z$ and $z\not\in\Z$. 

First suppose $z\in\Z$ and $|\pi^{-1}(z)|>c$. Then there are more than $c$
$\theta$-periodic points, so that $z\in\Z\subset\tilde \Sigma_\theta$. 

For the converse, notice that if either 
$0^j\not\in\mathcal F_\theta$ for all $j$
or $(r-1)^j\not\in\mathcal F_\theta$ for all $j$, 
then there are more than $c$ 
one-sided right- or left- $\theta$-periodic points respectively. 
By primitivity, each one-sided $\theta$-periodic point 
extends to at least one two-sided $\theta$-periodic
point, so that we deduce that if $\Z\cap \tilde\Sigma_\theta$ is 
non-empty then there are more than $c$ $\theta$-periodic points. 
Hence if $z\in\Z\cap\tilde\Sigma_\theta$, we see $|\pi^{-1}(z)|>c$. 

If $z\in\tilde\Sigma_\theta\setminus\Z$, then there exists $n_0$ 
such that for all $n\ge n_0$, 
$z_n\ldots z_{n_0}\not\in\mathcal F_\theta$. We may assume 
without loss of generality 
(since $|\pi^{-1}(z)|=|\pi^{-1}(z+k)|$ for any $k\in\Z$) that 
$z_{n_0-1},\ldots,z_0$ are all 0.
Let $z'=\bar\sigma^{n_0}z$, so that 
for all  $n\geq0,$  $z'_{n-1} \ldots z'_0 \not \in \mathcal F_\theta$. 
This means that for each $n$ there are distinct letters 
$\bar a_n^{(1)}, \ldots  ,\bar a_n^{(c+1)}$, and 
letters $a_n^{(1)},\ldots ,a_n^{(c+1)}$ such that $\theta_{z'_0}\circ \ldots
\circ \theta_{z'_{n-1}} (a_n^{(i)}) = \bar a_n^{(i)}$ for $1\leq i \leq c+1$.
By passing to a subsequence we can assume that 
$(a_n^{(1)},\ldots , a_n^{(c+1)})=(a_1,\ldots ,a_{c+1})$  and 
$(\bar a_n^{(1)},\ldots , \bar a_n^{(c+1)})=
(\bar a_1,\ldots ,  \bar a_{c+1})$ for all $n$. 

For $i=1,\ldots,c+1$, take $x^{(n),i}\in X$ such that for each $i$,  
$x^{(n),i}_0=\bar a_i$, $\pi(x^{(n),i})$
ends  with $z'_{n-1}\ldots z'_0$ and the segment of $x^{(n),i}$ 
from coordinate index $-\sum_{i=0}^{n-1} z'_ir^i$ to 
$r^{n}-1-\sum_{i=0}^{n-1} z'_ir^i$
agrees with $\theta^n(a_i)$. Taking limits gives at 
least $c+1$ distinct points of $\pi^{-1}(z')$. 
Since $\theta$ is injective and $\theta^{n_0}x\in\pi^{-1}(z)$ 
for any $x\in\pi^{-1}(z')$,
applying $\theta^{n_0}$ to these points shows 
$|\pi^{-1}(z)|\ge c+1$.

If $z\not\in \tilde\Sigma_\theta\cup \Z$, then for infinitely 
many $n$,  there exist $k_n$ such that 
$z_{n+k_n} \ldots z_n\in \mathcal F_\theta$.
Let $x=(x_j)$ satisfy $\pi(x)=z$ and fix $n$. Since  
$|\theta_{z_{n}}\circ \ldots
\circ \theta_{z_{n+k_n}} (\mathcal A)| = c $,   
there are $c$ possible choices for the block of $x_j$'s  for 
$j \in [0, r^{n} -1] - \sum_{i=0}^{n-1}r^{i}z_i$.  
Since $z\not\in\Z$, the union of these intervals exhausts all of $\Z$. 
\end{proof}

\begin{corollary}\label{cor:nonempty}
Let $\theta$ be a primitive, length $r$ substitution defined 
on $\mathcal A$ with column number $c$, of height one, 
and such that  $(X_\theta, \sigma)$ is infinite. Then   
the function $|\pi^{-1}(z)|:\Z_r\rightarrow \N$ has 
minimum value $c$, is non-constant, and  
$\hat\Sigma_\theta$ is a nonempty and proper subset of $\Z_r$.
\end{corollary}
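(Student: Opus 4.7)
The plan is to leverage Lemma \ref{lem:description_of_points_with_several_preimages}, which identifies $\tilde\Sigma_\theta$ with the excess-preimage locus $\{z\in\Z_r\colon |\pi^{-1}(z)|>c\}$. Given this, the corollary reduces to three claims: (A) $|\pi^{-1}(z)|\geq c$ for every $z$; (B) $\hat\Sigma_\theta$ is non-empty; (C) both $\hat\Sigma_\theta$ and $\tilde\Sigma_\theta$ are proper subsets of $\Z_r$. Claim (A) will be established by adapting the limit argument in the proof of Lemma \ref{lem:description_of_points_with_several_preimages}: if $z$ has digit expansion $(w_n)$, then $|\theta_{w_1}\circ\cdots\circ\theta_{w_n}(\mathcal A)|\geq c$ for every $n$ by the column-number definition, and a subsequence/compactness argument as there lifts these $c$ letters to $c$ distinct preimages of $z$ in $X_\theta$.

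For (B), the interesting case is $\Sigma_\theta=\emptyset$. By K\"onig's lemma there is then some $N$ for which every length-$N$ word lies in $\mathcal F_\theta$; in particular $0^N$ and $(r-1)^N$ are forbidden, so the stable images $F_0=\bigcap_n\theta_0^n(\mathcal A)$ and $F_{r-1}=\bigcap_n\theta_{r-1}^n(\mathcal A)$ both have cardinality exactly $c$. Counting two-sided $\theta^k$-fixed points (for $k$ a common multiple of the orders of the permutations $\theta_0|_{F_0}$ and $\theta_{r-1}|_{F_{r-1}}$) by the pair $(x_{-1},x_0)$ gives $|P|=|F_{r-1}|\cdot|F_0|=c^2$. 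When $c\geq 2$ this exceeds $c$, placing $\{\overline 0,\overline{r-1}\}$ in $\hat\Sigma_\theta$ by Definition \ref{def_Y_F}. The main obstacle is the case $c=1$: the assumption $\Sigma_\theta=\emptyset$ now forces every composition $\theta_{w_1}\circ\cdots\circ\theta_{w_N}$ to be constant, so $\theta^N(a)=W$ for a single word $W$ independent of $a\in\mathcal A$; iterating, every $x\in X_\theta$ is then $\sigma$-periodic with period dividing $|W|$, making $X_\theta$ finite and contradicting the hypothesis. This rules out $c=1$ combined with $\Sigma_\theta=\emptyset$ altogether.

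For (C), since $c$ is attained as a column cardinality we have $\mathcal F_\theta\neq\emptyset$; fix any forbidden word $w$. The clopen cylinder of $z\in\Z_r$ ending in $w$ is uncountable and lies in $\Z_r\setminus\Sigma_\theta$, so it contains points other than $\overline 0$ and $\overline{r-1}$, proving $\hat\Sigma_\theta\subsetneq\Z_r$. To upgrade this to $\tilde\Sigma_\theta\subsetneq\Z_r$, I would explicitly construct $z\in\Z_r$ whose digit expansion contains $w$ at arbitrarily large positions, interspersed with digits chosen so no truncation $\sigma^n(z)$ equals $\overline 0$ or $\overline{r-1}$; every truncation then still contains $w$ as a factor and hence lies outside $\hat\Sigma_\theta$, giving $z\notin\tilde\Sigma_\theta$.

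Putting the pieces together, Lemma \ref{lem:description_of_points_with_several_preimages} applied to the $z$ just constructed yields $|\pi^{-1}(z)|\leq c$, which combined with (A) gives equality and realises the minimum. Conversely, $\tilde\Sigma_\theta\supseteq\hat\Sigma_\theta$ is non-empty by (B), exhibiting some $z'$ with $|\pi^{-1}(z')|>c$, so $|\pi^{-1}|$ is non-constant.
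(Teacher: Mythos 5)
Your claims (A) and (C) are essentially the paper's own argument: the compactness/accumulation-point construction gives $|\pi^{-1}(z)|\ge c$ everywhere, and a point of $\Z_r\setminus\Z$ containing a forbidden word infinitely often, combined with Lemma \ref{lem:description_of_points_with_several_preimages}, realises the minimum and gives properness. The genuine gap is in (B), in the case $\Sigma_\theta=\emptyset$ and $c\ge 2$, where you claim $|P|=|F_{r-1}|\cdot|F_0|=c^2$. This counts $\theta^k$-fixed two-sided sequences in $\mathcal A^{\Z}$, but $P$ is by Definition \ref{def_Y_F} the set of substitution-periodic points \emph{in} $X_\theta$: the sequence generated by a seed pair $(a,b)\in F_{r-1}\times F_0$ lies in $X_\theta$ only if $ab\in\mathcal L_{X_\theta}$. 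So the correct count is the number of \emph{admissible} pairs, which a priori can be as small as $c$ (each $a\in F_{r-1}$ has at least one admissible right partner and vice versa, but the admissible pairs could form a perfect matching between $F_{r-1}$ and $F_0$). A concrete warning: for the bijective primitive substitution $\theta(1)=12$, $\theta(2)=31$, $\theta(3)=23$, all nine pairs generate $\theta^2$-fixed sequences in $\mathcal A^{\Z}$, yet only the three pairs $12$, $23$, $31$ lie in the language, so $|P|=3=c$, not $c^2=9$; here the matching scenario actually occurs (this example is excluded from the corollary only because $X_\theta$ is the periodic orbit of $(123)^\infty$, hence finite).

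What your proposal is missing is precisely the tool that rules out the matching scenario for \emph{infinite} shifts, and that is the crux of the paper's proof: by the Coven--Keane theorem, a positively expansive continuous map of an infinite compact metric space cannot be invertible, so there exist distinct $x,x'\in X_\theta$ with $x_n=x'_n$ for all $n\ge 0$. Since the maximal equicontinuous factor map collapses asymptotic pairs, $z:=\pi(x)=\pi(x')$ is well defined, and the paper then gets $|\pi^{-1}(z)|\ge c+1$: the sets $\theta_{z_0}\circ\cdots\circ\theta_{z_n}(\mathcal A)$ stabilise to a set $\mathcal A_0$ of size at least $c$, each letter of $\mathcal A_0$ is the $0$th coordinate of some preimage of $z$, and $x,x'$ are two distinct preimages sharing the same $0$th coordinate. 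Feeding this $z$ into Lemma \ref{lem:description_of_points_with_several_preimages} yields both non-constancy and non-emptiness of $\hat\Sigma_\theta$ in all cases (in particular, when $\Sigma_\theta=\emptyset$ this forces $z\in\Z$ and hence $|P|>c$). Without this input, or some equivalent substitute proving $|P|>c$ when $\Sigma_\theta=\emptyset$, your argument does not close; your treatment of the subcase $c=1$ with $\Sigma_\theta=\emptyset$ (forcing periodicity, hence a contradiction) is correct, but it is exactly the case $c\ge 2$ that carries the difficulty.
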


\begin{proof}
Let $z\in\Z_r$ and for each $a\in\mathcal A$, let $x^{(a)}\in X_\theta$
satisfy $x^{(a)}_0=a$. Let $y^{(n,a)}=\sigma^{m_n}\theta^n(x^{(a)})$,
where $m_n=z_{n-1}r^{n-1}+\ldots +z_1r+z_0$. Let $Y^{(n)}=\{y^{(n,a)}\colon 
a\in\mathcal A\}$ and let $Y^{(\infty)}$ be the set of accumulation points
of $Y^{(n)}$ as $n\to\infty$. The 0th coordinates of the 
$y^{(n,a)}$ exhaust $\theta_{z_0}\circ\theta_{z_1}\circ\cdots\theta_{z_{n-1}}
(\mathcal A)$, so that $|Y^{(n)}|\ge c$, and also $|Y^{(\infty)}|\ge c$.
Since $Y^{(\infty)}$ is a subset of $\pi^{-1}(z)$, we conclude 
$|\pi^{-1}(z)|\ge c$ for each $z\in\Z_r$. 

Since $\mathcal F_\theta \neq  \emptyset$, we take a word 
$w \in \mathcal F_\theta$, and then a point 
$z\in \Z_r\backslash \Z$ which contains 
$w$ infinitely often. Then $z\not \in \tilde\Sigma_\theta$, and
by Lemma \ref{lem:description_of_points_with_several_preimages}, 
$|\pi^{-1}(z)|=c$. 

To see that there exist points $z$ with $|\pi^{-1}(z)|> c$,    
we note that our shifts will always have at least one non-trivial 
right asymptotic orbit equivalence class. This follows from the fact 
that any  continuous, positively expansive map on an infinite compact 
metric space cannot be invertible \cite{Coven_Keane}.  We pick $x$ 
and $x'$ that are right asymptotic, and we suppose that 
$x_n=x'_n$  for $n\geq 0$. Let $z=\pi(x)=\pi(x')$. 
The sets $\theta_{z_0}\circ \theta_{z_1}\ldots \theta_{z_n}(\mathcal A)$ 
are decreasing in $n$ and so equal $\mathcal A_0$, a set of cardinality 
at least $c$, for all large $n$.
For each $a\in \mathcal A_0$,  there is a $y$ in $\pi^{-1}z$ with 
$y_0=a$. As $x$ and $x'$ are two elements in $\pi^{-1}z$ with the 
same 0 coordinate, we conclude that 
$|\pi^{-1}z|\geq c+1$.
\end{proof}

\subsection{Bounding denominators of $\kappa(\Phi)$} \label{subsec:denom_bound}

We have established that if $\theta$ has column number $c$, then  the set
$\{ z: |\pi^{-1}(z)|>c\}$ is the set of points whose tail
lies in  a sofic shift that is a proper subshift of the full
one-sided shift on $r$ letters, that is $\tilde\Sigma_\theta$.
Although we already know that $\kappa(\Phi)$ is rational for
$\Phi\in\mbox{Aut}(X_\theta,\sigma)$,  
Part (2) of  Theorem \ref{thm:ethan} tells us
that $\tilde\Sigma_\theta\subset\tilde\Sigma_\theta+\kappa(\Phi)$,
which will allow us to give bounds on the denominator of $\kappa(\Phi)$. 
We also recover a special case of a result in \cite{dhs}, namely 
that for our shifts, all endomorphisms  are  automorphisms.

We write $d$ for the $r$-adic metric: $d(x,y)=r^{-k}$,
where $k=\min\{j\colon x_j\ne y_j\}$ (or $d(x,y)=0$ if $x=y$).
We also need a metric on the circle. We identify the circle with 
$[0,1)$ and define $d_\circ(x,y)=\min_{n\in\Z}|n+(x-y)|$.

If $X$ is a subshift of $\Z_r$, we   say that $x$
has a \emph{tail} belonging to $X$ if there exists 
$k \in \N$ such that $\sigma^k(x)\in X$. 

\begin{lemma}\label{lem:soficsum}
Let $X$ be a proper sofic shift of $\Z_r$, so that there is
a word of length $j$ that does not occur in the language of $X$. 
Suppose that there exists $t\in \Z_r$ 
such that $x+t$ has a tail belonging to $X$ for each $x \in X$.  
Then $nt\in \Z$ for some $n$ satisfying $\gcd(n,r)=1$ and $n\le r^j-1$.
\end{lemma}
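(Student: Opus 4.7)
My proof combines three ingredients: a compactness argument to obtain a uniform radius, a carry decomposition of $\Z_r$ addition, and pigeonhole on length-$j$ blocks.

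\emph{Step 1 (Uniform radius).} Rewrite the hypothesis as $X+t\subset \tilde X$, where $\tilde X:=\bigcup_{k\ge 0}\sigma^{-k}(X)$ is an increasing union of closed subsets of $\Z_r$. Since $X+t$ is compact and the open sets $(X+t)\setminus\sigma^{-k}(X)$ have empty total intersection, the finite intersection property produces a uniform $K\ge 0$ with $\sigma^K(x+t)\in X$ for every $x\in X$.

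\emph{Step 2 (Carry decomposition).} Set $u:=\sigma^K(t)\in\Z_r$. For each $x\in X$, the carry out of position $K-1$ in the sum $x+t$ is a bit $c(x)\in\{0,1\}$ depending only on the bottom $K$ digits of $x$ and of $t$, and it satisfies
\[
\sigma^K(x+t)=\sigma^K(x)+u+c(x).
\]
So the map $\pi:x\mapsto\sigma^K(x+t)$ sends $X$ into $X$ and acts as ``translation by $u$'' up to an integer correction of size at most one.

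\emph{Step 3 (Pigeonhole on $j$-blocks).} Iterate $\pi$ starting from an $x_0\in X$ chosen to be $\sigma^K$-periodic (a suitable periodic point exists since the sofic shift $X$ is nonempty), producing $x_0,x_1,\ldots\in X$. Unwinding the recursion while tracking the carries, one obtains a formula for $x_n\pmod{r^j}$ involving a periodic contribution from $x_0$, the shifted translates $\sigma^{iK}(t)$ for $i=1,\ldots,n$, and an integer correction of size $O(n)$. The language $\mathcal L_j(X)\subseteq \Z/r^j\Z$ has at most $r^j-1$ elements, so pigeonhole on the first $r^j$ iterates yields $0\le n_1<n_2\le r^j-1$ with $x_{n_1}\equiv x_{n_2}\pmod{r^j}$; subtracting cancels the periodic contribution and produces a nontrivial congruence of the form $\sum_{i=n_1+1}^{n_2}\sigma^{iK}(t)\equiv m\pmod{r^j}$ for some integer $m$.

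\emph{Step 4 (Promotion to $\Z$ and conclusion).} Repeating the pigeonhole at higher precisions modulo $r^{\ell j}$ for $\ell\to\infty$ and invoking compactness of $\Z_r$ upgrades the mod-$r^j$ congruence to an exact identity in $\Z_r$, yielding $Nt\in\Z$ for some $1\le N\le r^j-1$. Lemma~\ref{lem:divr} then replaces $N$ by a divisor $n\le N$ coprime to $r$ with $nt\in\Z$, which is the claim. The principal obstacle is precisely this combination of careful carry bookkeeping in Step 3 and the bootstrap in Step 4, ensuring that the eventual relation holds exactly in $\Z_r$ rather than merely modulo a power of $r$; the decisive structural input throughout is the bound $|\mathcal L_j(X)|\le r^j-1$ coming from the single forbidden length-$j$ word.
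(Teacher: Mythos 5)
There are genuine gaps, and they occur at two different levels. First, Step 1 does not establish a uniform $K$. The sets $\sigma^{-k}(X)$ are closed, so the sets $(X+t)\setminus\sigma^{-k}(X)$ are relatively \emph{open} in $X+t$; the finite intersection property is a statement about closed sets, and a decreasing sequence of nonempty open subsets of a compact space can have empty intersection (consider $(0,1/k)\subset[0,1]$). Equivalently, an increasing union of closed sets covering a compact set need not have a single member that covers it: $[0,1]=\bigcup_k\bigl(\{0\}\cup[1/k,1]\bigr)$. So your compactness argument is invalid, and I see no soft argument giving uniformity: if $x^{(n)}+t$ first enters $X$ only after $k_n\to\infty$ shifts, passing to a limit point $x^*$ gives no contradiction, because agreement of $x^{(n)}+t$ with $x^*+t$ in low digits says nothing about forbidden words sitting in the intermediate range of positions. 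Uniformity becomes plausible only once one knows $t$ has eventually periodic digits --- which is the conclusion, not an available tool.

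The second gap is structural and, I believe, fatal to the approach even granting $K$. Your recursion yields $x_n=x_0+\sum_{i=1}^n\sigma^{iK}(t)+E_n$, so the pigeonhole relation concerns a sum of \emph{distinct shifts} of $t$, not an integer multiple of $t$. Writing $\sigma^{iK}(t)=(t-\tau_i)/r^{iK}$ with $\tau_i\in\Z$ the bottom $iK$ digits and clearing denominators, even an \emph{exact} relation $\sum_{i=n_1+1}^{n_2}\sigma^{iK}(t)=m$ would only give $At\in\Z$ with $A=1+r^K+\cdots+r^{(n_2-n_1-1)K}$, a multiplier of size at least $r^K$ whenever $n_2-n_1\ge 2$; after invoking Lemma~\ref{lem:divr} this gives no control of the form $n\le r^j-1$, which is exactly the bound Theorem~\ref{thm:upper_bound} needs. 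Moreover, the promotion in Step 4 cannot be carried out: pigeonhole at precision $r^{\ell j}$ requires on the order of $(r^j-1)^\ell$ iterates, so the pair $(n_1,n_2)$ --- hence the multiplier $A$ --- changes with $\ell$, and there is no single relation to pass to the limit; compactness of $\Z_r$ cannot merge congruences with different multipliers. (Note also that your corrections $m$ have size $O(r^j)$, comparable to the modulus, so the mod-$r^j$ congruence by itself carries almost no information.) The paper's proof sidesteps all of this by leaving $\Z_r$ for the circle: it studies the set $\Omega(t)$ of limit points of $\psi_n(t)=\sum_{k<n}t_kr^{k-n}$, shows that if $t$ is irrational or rational with denominator exceeding $r^j$ then some multiple $mt$ has a limit point in $(r^{-j}/2,r^{-j})$, and then contradicts the tail hypothesis using a periodic point of $X$ containing a valid $j$-block $B$ whose successor block $B+1$ is forbidden: one of $w+mt$, $w+2mt$ must contain $B+1$ infinitely often. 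Some such equidistribution ingredient seems unavoidable, and your scheme has no substitute for it.
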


\begin{lemma}\label{lem:circdense}
Let $\Omega$ be an infinite subset of $S^1$. Then for each $\epsilon>0$, there
exists an $N$ such that 
$N\star\Omega:=\{n\omega\colon 1\le n\le N;\ \omega\in\Omega\}$ 
is $\epsilon$-dense.
\end{lemma}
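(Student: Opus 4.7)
The plan is to split into two cases depending on whether $\Omega$ contains an irrational point or consists entirely of rationals, and in each case produce a single $\omega\in\Omega$ whose first $N$ multiples are already $\epsilon$-dense.

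First, suppose $\Omega$ contains some irrational $\omega$, i.e.\ an element not in $\Q/\Z$. Then by the classical equidistribution (or Weyl) theorem, the sequence $\{n\omega\colon n\ge 1\}$ is dense in $S^1$, so there exists $N$ such that the finite set $\{n\omega\colon 1\le n\le N\}$ is already $\epsilon$-dense. Since this set is contained in $N\star\Omega$, the conclusion follows.

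Second, suppose every element of $\Omega$ is rational. For each $K$, the set of rationals in $S^1$ with denominator (in lowest terms) at most $K$ is finite, so since $\Omega$ is infinite the denominators appearing in $\Omega$ must be unbounded. Pick $\omega=p/q\in\Omega$ with $\gcd(p,q)=1$ and $q>1/\epsilon$. Since multiplication by $p$ is a bijection of $\Z/q\Z$, the set $\{n\omega\colon 1\le n\le q\}$ equals $\{k/q\colon 0\le k\le q-1\}$ in $S^1$, which is $(1/q)$-dense and hence $\epsilon$-dense. Taking $N=q$ gives the desired bound.

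Combining the two cases gives the lemma. The argument is essentially a routine case split; the only real point is noticing that infinitely many rationals in $S^1$ force unbounded denominators, which is what delivers a uniform $\epsilon$-net from a single orbit. No uniformity over $\Omega$ is needed because a single well-chosen $\omega$ already does the work.
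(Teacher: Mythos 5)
Your proof is correct and follows essentially the same route as the paper's: the identical case split (an irrational element gives dense multiples, hence a finite $\epsilon$-net by compactness; otherwise infinitude of $\Omega\subset\Q$ forces a denominator $q>1/\epsilon$, and coprimality makes the first $q$ multiples of $p/q$ exhaust $\{k/q\colon 0\le k<q\}$). The only difference is that you spell out the details (Weyl equidistribution, the bijection of $\Z/q\Z$) slightly more explicitly than the paper does.
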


\begin{proof}
If $\Omega$ contains an irrational point, then its multiples 
are dense and we're done.
Otherwise if $\Omega\subset\Q$, given $\epsilon$, since $\Omega$ is infinite,
it must contain a point $\frac pq$ (in lowest terms) with $q>1/\epsilon$. 
Now $\{j\cdot \frac pq\bmod 1: j\in \mathbb N_0\}$
exhausts $\{\frac iq\colon 0\le i<q\}$, so that $q\star\Omega$ 
is $\epsilon$-dense.
\end{proof}

\begin{proof}[Proof of Lemma \ref{lem:soficsum}]
Define $\psi_n\colon \Z_r\to S^1$  by 
$\psi_n(x)=\sum_{k=0}^{n-1}x_kr^{k-n}$. For $\omega\in S^1$, 
we denote $r\omega\bmod 1$ by $rs$.
Let $\Omega(t)\subset S^1$ be the set of limit points of 
$\{\psi_n(t): n\in \N_0\}$.
Notice that if $\psi_{n_i}(t)\to\omega$, then $\psi_{n_i-1}(t)\to r\omega$,
so that $\Omega(t)$ is closed under multiplication by $r$. Also $\Omega(mt)=
m\Omega(t)$  and $\Omega(m+t)= \Omega(t)$  for $m\in\Z$. 

We first claim that if $t$ is not rational, or if $t$ is rational with
a denominator exceeding $r^j$, then there exists an $m$ such that 
$\Omega(mt)$ contains an element of $S^1$ in $(r^{-j}/2,r^{-j})$. 

Suppose first that $\Omega(t)$ 
is a finite set. Since it is closed under multiplication
by $r$, it must consist of rationals (given $\omega\in\Omega(t)$, 
there exist $0\le m<n\le |\Omega(t)|$ such that
$r^m\omega$ and $r^n\omega$ agree modulo 1, so that $(r^n-r^m)\omega\in\Z$). 
In particular, by taking a least common multiple, 
there exists a $Q\in\N_0$ such that 
$Q\omega=0\bmod 1$ 
for all $\omega\in\Omega(t)$. 
Now as $d_\circ(\psi_n(t),\Omega(t))\to 0$, we have $d_\circ(\psi_n(Qt),0)=
d_\circ(Q\psi_n(t),Q\Omega(t))\to 0$. This implies $Qt\in\Z$, so 
that this is the rational case. By assumption, 
the denominator of $t$ exceeds $r^j$. 
By Lemma \ref{lem:divr}, we can write $t$ as $p/q$ with 
$\gcd(p,q)=\gcd(q,r)=1$. Then there exist  $a,m\in\Z$ such that  $mt=a-1/q$ 
in $\Z_r$. Now $\Omega(mt)=\Omega(-1/q)$. Since $q$ is coprime to $r$,
there exists a $b$ such that $r^b\equiv 1\pmod q$ (or $q|r^b-1$). Write 
$r^b-1=sq$ and notice that the expansion of $-1/q=s/(1-r^b)$ 
in $\Z_r$ equals $\bar s=\ldots sss$ where $\bar s$ has period $b$.
Now we see $\psi_{bn}(-1/q)\to s/(r^b-1)=1/q$, so that $1/q\in\Omega(mt)$. 
There exists an $l\in\Z$ such that $l/q\in (r^{-j}/2,r^{-j})$
In particular, we have shown the claim above (for $\Omega(lmt)$)
in the case that $t$ is rational.

If $\Omega(t)$ is infinite, then  by Lemma \ref{lem:circdense}, there
is an $M$ such that $\bigcup_{m=1}^M\Omega(mt)$ is 
$1/(2r^j+1)$-dense. Hence there is an $m\le M$ such that 
$\Omega(mt)\cap (r^{-j}/2,r^{-j})\ne\emptyset$ 
completing the proof of the claim. 

There is a natural bijection $\Phi$ between $j$-blocks of $X$ and $\Z/r^j\Z$:
given a $j$-block $B=[x_{j-1}\ldots x_0]$, $\Phi(B)=x_{j-1}r^{j-1}+\ldots+x_0$.
(Recall that we are writing elements of $\Z_r$ from right to left so that 
$x_{j-1}$ is the ``most significant digit'' of $B$).

Hence for a $j$-block
$B$, there is a natural notion of its successor $B+1$. Let $B$ be chosen so that 
$B$ is a valid $j$-block, but $B+1$ is not. To show that such a $B$ exists,
list the $j$-blocks sequentially, each one the successor of the previous one. 
At some point in the sequence there is a valid $j$-block 
followed by an invalid one 
(otherwise all would be valid or all would be invalid). 
Since $X$ is sofic, it has dense periodic points. Let $z$ be a periodic point
in $X$ containing $B$'s. Let $p$ be the period of $z$. 

By the above, there exist $\omega\in\Omega$ and $m$ such that 
$\frac 12 r^{-j}<m\omega\bmod 1<r^{-j}$. Since $\omega\in\Omega$, there exists 
an increasing sequence $(n_i)$ such that $\psi_{n_i}(t)\to \omega$. 
For all sufficiently large $i$, $\frac 12r^{-j}<\psi_{n_i}(mt)<r^{-j}$. 
Refine the sequence $(n_i)$ such that $\frac 12r^{-j}<\psi_{n_i}(mt)<r^{-j}$
for each $i$ and all of the $n_i$ are congruent modulo $p$ (such a
subsequence exists by the pigeonhole principle: at least one congruence class 
must contain infinitely many terms of the original sequence). 

Now let $k$ be chosen so that the block $B$ appears in the coordinate range
$n_i-j$ to $n_i-1$ for each $i$ in $w:=\sigma^k(z)$. Let $\alpha=\Phi(B)/r^j$.
The fact that $B$ appears in these blocks is equivalent to the assertion
that $\psi_{n_i}(w)\in [\alpha,\alpha+r^{-j})$, 
while the appearance of $B+1$ in locations 
$n-j$ to $n-1$ of $x\in X$ is equivalent to the assertion that 
$\psi_n(x)\in [\alpha+r^{-j},\alpha+2r^{-j})$. 

Now notice that for each $i$, either $\psi_{n_i}(w+mt)$ or
$\psi_{n_i}(w+2mt)$ lies in $[\alpha+r^{-j},\alpha+2r^{-j})$. 
In particular, one of $w+mt$ and $w+2mt$ contains infinitely many $B+1$ blocks,
and hence does not have a tail lying in $X$. 
Hence we have shown that if $t$ is irrational, 
or $t$ is rational with denominator
exceeding $r^j$, then the hypotheses of the theorem cannot be satisfied. 
\end{proof}

\begin{theorem}\label{thm:upper_bound}
Let $\theta$  be a primitive, length $r$ substitution on 
$\mathcal A$, with column number $c$, of height one,  
and such that $(X_\theta,\sigma)$ is infinite. 
Then $\mbox{End}(X_\theta,\sigma)=\mbox{Aut}(X_\theta,\sigma)$. 

If $\mathcal F_\theta$ contains a word of length $j$, then
any $\Phi\in\mbox{Aut}(X_\theta, \sigma)$ satisfies $\kappa(\Phi)=\ell/n$ 
for some $n\leq r^j-1$ with $\gcd(n,r)=1$, and some $\ell \in \Z$, and
$\Phi^{nk}=\sigma^{\ell k}$ for some $1\leq k\leq c$. \end{theorem}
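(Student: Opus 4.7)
My plan is to combine Theorem \ref{thm:ethan}(2a) with the denominator bound from Lemma \ref{lem:soficsum}. Let $\Phi\in\mbox{End}(X_\theta,\sigma)$. Since endomorphisms are factor maps from $(X_\theta,\sigma)$ to itself, Theorem \ref{thm:ethan}(2a) yields $\tilde\Sigma_\theta\subseteq\tilde\Sigma_\theta+\kappa(\Phi)$. Setting $t:=-\kappa(\Phi)$, this is equivalent to saying that for every $z\in\tilde\Sigma_\theta$, $z+t\in\tilde\Sigma_\theta$; in particular, for every $z\in\hat\Sigma_\theta\subseteq\tilde\Sigma_\theta$, the sum $z+t$ has a tail in $\hat\Sigma_\theta$. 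This is exactly the hypothesis of Lemma \ref{lem:soficsum} applied to $X=\hat\Sigma_\theta$, which is a proper sofic subshift of $\Z_r$ by Lemma \ref{lem:sofic} and Corollary \ref{cor:nonempty}, and has a forbidden word of length $j$ coming from $\mathcal F_\theta$. The lemma then produces an integer $n\leq r^j-1$ with $\gcd(n,r)=1$ such that $nt\in\Z$, i.e. $\kappa(\Phi)=\ell/n$ for some $\ell\in\Z$.

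Next I would pass to the kernel of $\kappa$. By the additivity in Theorem \ref{thm:ethan}(1), the map $\Psi:=\Phi^n\circ\sigma^{-\ell}$ has $\kappa(\Psi)=n\kappa(\Phi)-\ell=0$. By Theorem \ref{thm:ethan}(2b), the set $E_0:=\kappa^{-1}(0)\subseteq\mbox{End}(X_\theta,\sigma)$ has at most $c$ elements; since $\kappa$ is additive, $E_0$ is closed under composition, so it is a finite sub-monoid of $\mbox{End}(X_\theta,\sigma)$. Minimality of $(X_\theta,\sigma)$ forces every $\Xi\in E_0$ to be surjective, and from $\Xi^i=\Xi^j$ for some $i<j$ (pigeonhole on $E_0$) one deduces $\Xi^{j-i}=\mbox{Id}$ by surjectivity: for $x\in X_\theta$, write $x=\Xi^i(z)$ and compute $\Xi^{j-i}(x)=\Xi^j(z)=\Xi^i(z)=x$. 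Hence $E_0$ is a finite group of order at most $c$, so $\Psi^k=\mbox{Id}$ for some $1\leq k\leq c$, giving $\Phi^{nk}=\sigma^{\ell k}$.

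Finally, the identity $\Phi^{nk}=\sigma^{\ell k}$ immediately implies $\mbox{End}(X_\theta,\sigma)=\mbox{Aut}(X_\theta,\sigma)$: the right-hand side is a homeomorphism, so $\Phi$ must be injective, and surjectivity comes from $\Phi$ being an endomorphism.

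The main technical point to double-check is the application of Lemma \ref{lem:soficsum}: I have a word of length $j$ in $\mathcal F_\theta$, which is forbidden in $\Sigma_\theta$, but $\hat\Sigma_\theta$ may contain the extra orbits $\bar 0$ and $\overline{r-1}$. If the given word is neither $0^j$ nor $(r-1)^j$ it remains forbidden in $\hat\Sigma_\theta$ and we are done; in the remaining edge cases one must exhibit another forbidden word of length at most $j$ in $\hat\Sigma_\theta$ (using that $\mathcal F_\theta$ contains many words and $\hat\Sigma_\theta$ is still a proper subset of $\Z_r$ by Corollary \ref{cor:nonempty}).
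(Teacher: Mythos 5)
Your proposal follows essentially the same route as the paper's proof. For the denominator bound, the paper likewise combines Theorem \ref{thm:ethan} with Lemma \ref{lem:description_of_points_with_several_preimages}, Corollary \ref{cor:nonempty} and Lemma \ref{lem:soficsum} to conclude $\kappa(\Phi)=\ell/n$ with $\gcd(n,r)=1$ and $n\le r^j-1$. The one step where you genuinely diverge is the derivation of $\Phi^{nk}=\sigma^{\ell k}$: the paper notes that $\Psi=\Phi^n\sigma^{-\ell}$ maps each fiber $\pi^{-1}(z)$ into itself, chooses $z$ with $|\pi^{-1}(z)|=c$, finds $1\le k\le c$ so that $\Psi^k$ has a fixed point, and invokes minimality to get $\Psi^k=\mbox{Id}$; you instead observe that $\kappa^{-1}(0)$ is, by Theorem \ref{thm:ethan}(1) and (2b), a monoid of at most $c$ surjections, and that a finite monoid of surjections is a group. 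Both arguments are correct and give the same bound $k\le c$; yours uses part (2b) as a black box where the paper reuses the fiber picture directly, so the difference is cosmetic rather than structural.

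The technical point you flag at the end is a genuine subtlety, and it is one that the paper's own proof silently ignores: the paper applies Lemma \ref{lem:soficsum} to $\hat\Sigma_\theta$ treating the length-$j$ word of $\mathcal F_\theta$ as a word missing from its language, which is exactly wrong in your edge case. Be aware, though, that your proposed repair (``exhibit another forbidden word of length at most $j$ in $\hat\Sigma_\theta$'') cannot always be carried out. Consider the period-doubling substitution $\theta(a)=ab$, $\theta(b)=aa$ with $r=2$: here $c=1$ and $\mathcal F_\theta$ contains the word $0$, so $j=1$; but there are two two-sided $\theta$-periodic points, so $\hat\Sigma_\theta=\Sigma_\theta\cup\{\bar 0,\bar 1\}=\{\bar 0,\bar 1\}$, whose language contains every word of length $1$. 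The shortest words missing from $\hat\Sigma_\theta$ have length $j+1$ (namely $01$ and $10$), so a direct application of Lemma \ref{lem:soficsum} only yields $n\le r^{j+1}-1$, and recovering the stated bound $n\le r^j-1$ requires a supplementary argument (in this example one can argue directly: $\tilde\Sigma_\theta=\Z$, so the inclusion $t+\tilde\Sigma_\theta\subseteq\tilde\Sigma_\theta$ forces $t\in\Z$). So your proof has the same residual gap in this edge case as the paper's proof does; the difference is that you noticed it, while the paper did not.
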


\begin{proof}
Let $\theta$ be a substitution as in the statement and let $\Sigma_\theta$, 
$\hat\Sigma_\theta$ and $\tilde\Sigma_\theta$ be as constructed above. 
Let $\Phi$ be an endomorphism of $X_\theta$ and let $t=\kappa(\Phi)$. 
By Theorem \ref{thm:ethan}, Lemma 
\ref{lem:description_of_points_with_several_preimages} and
Corollary \ref{cor:nonempty}, we have $t+\tilde\Sigma_\theta\subset 
\tilde\Sigma_\theta$. By Lemma \ref{lem:soficsum}, we deduce $t$
is rational, $\ell /n$ say, with denominator coprime to $r$ and at most $r^j-1$.

Now $\kappa(\Phi^n\sigma^{-\ell })=0$, so that $\Phi^n\sigma^{-\ell }$
is a self-map of $\pi^{-1}(z)$ for any $z$ in $\Z_r$.
Choosing $z$ so that $|\pi^{-1}(z)|=c$, we see that there is a $1\le k\le c$
such that $(\Phi^n\sigma^{-\ell })^k$ has a fixed point. 
By minimality of $X_\theta$,
$\Phi^{nk}=\sigma^{\ell k}$ and we see that $\Phi$ is bijective, so
$\Phi\in\mbox{Aut}(X_\theta,\sigma)$. 
\end{proof}

\begin{example}\label{example_one_continued}
We continue with Example \ref{example_one}, which we already noted
has a coincidence and is of height one. Since the word 1 belongs to 
$\mathcal F_\theta$, Theorem \ref{thm:upper_bound} implies for any 
$\Phi\in\mbox{Aut}(X_\theta)$, $\kappa(\Phi)$ has denominator
1 or 3. If there were an automorphism with denominator 3, then by taking
powers and composing with a power of the shift, we could find
an automorphism $\Phi$ with $\kappa(\Phi)
=-1/3=\bar 1$. However, from the proof of Theorem \ref{thm:upper_bound}, this
would imply $\bar 1+\tilde{\Sigma_\theta}\subset\tilde{\Sigma_\theta}$, which
is false as $\bar 0\in\tilde{\Sigma_\theta}$, but $\bar 1\not\in
\tilde{\Sigma_\theta}$. Hence $\kappa(\Phi)\in\Z$ for all 
$\Phi\in\mbox{Aut}(X_\theta,\sigma)$ and such $\Phi$ are powers of the shift
by the theorem.
\end{example}

Recall that if $(X_\theta,\sigma)$ is a two-sided shift, then
$(\bar X_\theta,\bar\sigma)$ is the corresponding one-sided shift.

\begin{theorem}\label{thm:trivial_one_sided_group}
Let $\theta$  be a primitive, length $r$ substitution, 
of height one,  and such that $(X_\theta,\sigma)$ is infinite. 
If $\kappa$ is injective (in particular if $\theta$ has a coincidence), then
$\mbox{Aut}(\bar X_\theta,\bar \sigma)=\{\mbox{Id}\}$. 
\end{theorem}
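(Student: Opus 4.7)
The plan is to lift any one-sided automorphism $\bar\Phi$ to a two-sided automorphism $\Phi$ of $X_\theta$, invoke the cyclic (hence torsion-free) structure of $\mbox{Aut}(X_\theta,\sigma)$ supplied by Corollary \ref{cor:cyclic}(1), and then use the non-invertibility of $\bar\sigma$ together with a memory-of-block-code argument to force $\bar\Phi=\mbox{Id}$.

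First I would observe that, by Curtis--Hedlund--Lyndon, any $\bar\Phi\in\mbox{Aut}(\bar X_\theta,\bar\sigma)$ is a sliding block code $\bar\Phi(x)_i=f(x_i,x_{i+1},\ldots,x_{i+N})$ with \emph{memory zero} (there are no negative coordinates in the one-sided shift). Applying the same formula position-wise extends $\bar\Phi$ to a continuous $\sigma$-equivariant map $\Phi:X_\theta\to X_\theta$, and the surjective projection $p:X_\theta\to\bar X_\theta$ forgetting negative coordinates satisfies $p\circ\Phi=\bar\Phi\circ p$ and $p\circ\sigma^m=\bar\sigma^m\circ p$ for $m\ge 0$. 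By Theorem \ref{thm:upper_bound}, $\Phi$ is an automorphism, and Corollary \ref{cor:cyclic}(1) yields that $\mbox{Aut}(X_\theta,\sigma)$ is cyclic; since it contains $\sigma$, which has infinite order on the infinite minimal system $X_\theta$, this group is isomorphic to $\Z$ and hence torsion-free. Theorem \ref{thm:upper_bound} also gives positive integers $n,k$ and $m\in\Z$ with $\Phi^{nk}=\sigma^m$, and it suffices to show $m=0$: torsion-freeness then yields $\Phi=\mbox{Id}$, hence $\bar\Phi=\mbox{Id}$.

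To rule out $m<0$, $\Phi^{nk}$ inherits memory zero from $\Phi$, so $\Phi^{nk}(x)_0$ depends only on the nonnegative coordinates of $x$. If $\Phi^{nk}=\sigma^m$ with $m<0$, then $x_m$ would be determined by $(x_j)_{j\ge 0}$ on $X_\theta$; however, the existence of a branch point in $\bar X_\theta$ (guaranteed for any infinite minimal one-sided shift by the complexity-increase argument underlying Lemma \ref{lem:cassaigne}) yields a pair $y,y'\in X_\theta$ with $y_j=y'_j$ for all $j\ge 0$ but $y_{-1}\ne y'_{-1}$; shifting this pair by $-m-1\ge 0$ positions produces a pair with the same nonnegative coordinates but differing at position $m$, a contradiction. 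To rule out $m\ge 1$, I would push $\Phi^{nk}=\sigma^m$ down through $p$, using $p\circ\sigma^m=\bar\sigma^m\circ p$ and surjectivity of $p$, to obtain $\bar\Phi^{nk}=\bar\sigma^m$; the left-hand side is a homeomorphism of $\bar X_\theta$, while $\bar\sigma$ is not injective (branch points have multiple preimages), so $\bar\sigma^m$ fails to be injective for $m\ge 1$. Hence $m=0$. The main obstacle is the case $m<0$, which escapes the natural one-sided projection and must be handled via the memory structure of the two-sided block code together with the existence of branch points.
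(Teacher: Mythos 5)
Your proposal is correct and takes essentially the same approach as the paper: lift $\bar\Phi$ to an endomorphism $\Phi$ of the two-sided shift, use Theorem \ref{thm:upper_bound} to write a power of $\Phi$ as a power of $\sigma$, use the non-invertibility of $\bar\sigma$ (existence of branch points) to force that power of the shift to be zero, and conclude $\Phi=\mathrm{Id}$, hence $\bar\Phi=\mathrm{Id}$. The only differences are ones of detail, not of route: the paper gets the final step directly from $\kappa(\Phi)=0$ and the injectivity of $\kappa$, whereas you detour through Corollary \ref{cor:cyclic} and torsion-freeness of the infinite cyclic group, and you spell out the sign analysis of the exponent that the paper leaves implicit (your $m<0$ case, handled via memory-zero codes and branch points, can in fact also be dispatched by projection, since $\Phi^{nk}\circ\sigma^{-m}=\mathrm{Id}$ descends to $\bar\Phi^{nk}\circ\bar\sigma^{-m}=\mathrm{Id}$, whose right-hand factor is non-injective).
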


\begin{proof}
Any automorphism $\bar \Phi$ of $(\bar X_\theta,\bar \sigma)$ 
gives rise to an endomorphism $\Phi$ of $(X_\theta,\sigma)$,  which satisfies
$\Phi^n=\sigma^k$ for some $k,n\in\N\cup\{0\}$. 
As $\bar \Phi$ is an
automorphism, we have  $k=0$ and so $\kappa (\Phi)=0$.
Now the injectivity of $\kappa$ implies that  $\Phi=\mbox{Id}$, so 
$\bar\Phi=\mbox{Id}$ also.
\end{proof}

\subsection{Computing $\mbox{Aut}(X_\theta,\sigma)$}\label{subsec:compaut}
In this section, we use the bounds on the denominator appearing in $\kappa$
together with bounds on the radius of the block code to obtain an 
algorithm to compute the automorphism group of a constant-length substitution.

Let $\Phi\in\mbox{Aut}(X_\theta,\sigma)$. By the Curtis--Hedlund--Lyndon theorem,
there exist $l$, $r$, and 
a map $f\colon \mathcal A^{l+r+1}\to \mathcal A$ with the property that
$(\Phi(x))_n= f(x_{n-l}, \ldots ,x_n, \ldots ,x_{n+r})$ for all $x$ and $n$.
Let $l,r$ be the smallest possible integers so that such an $f$ exists.
We call $l$ and $r$ 
the  left  and right radius of $\Phi$ respectively.   
We say $\Phi$ has  radius $R$ if
its left radius and right radius are both at most $R$.

The recognizability of $\theta$ implies that any  $x \in X_\theta$ 
can be written in a unique way as 
$x=\sigma^k(\theta(y))$ where $y\in X_\theta$ and, if $\theta$ is 
length $r$, $0\leq k<r$ (see \cite{host} for a proof in the 
injective case, which is all we need).
The following proposition tells us that up to a shift, 
every automorphism has a small radius,
parallel to the case 
of {\em reduced} constant-length substitutions with partial 
continuous spectrum in the work of Host and Parreau 
\cite[Theorem 1.3]{host_parreau}. 
We remark also that Coven, Dykstra, Keane and LeMasurier 
have a  similar result in \cite[Theorem 1]{CDKL}. 
  
\begin{proposition}\label{prop:radius_two} 
Let $\theta$  be an injective primitive, length $r$ substitution  of height one, 
and  such that $(X_\theta,\sigma)$ is infinite. 
If $\Phi\in \mbox{Aut} (X_\theta,\sigma)$ has the property
that $\kappa(\Phi)\in\Z_r\setminus \Z$ is periodic, then $\Phi$ 
has right radius zero and left radius at most 1. 

Further, if $\kappa(\Phi)=k/(1-r^p)$ for $0<k<r^p-1$, one has
\begin{equation*}
\theta^{-c!p }\circ \sigma^{-k(1+r^p+r^{2p}+\ldots + r^{(c!-1)p})}\circ 
\Phi\circ \theta^{c!p} = \Phi,
\end{equation*}
where $c$ is the column number.

If $\kappa(\Phi)=0$, then $\Phi$ has left and right radius at most 1 and 
$\Phi$ satisfies $\theta^{-c!}\circ\Phi\circ\theta^{c!}=\Phi$. 

\end{proposition}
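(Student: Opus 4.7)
The plan is to introduce an operator $\tau$ on the finite set $E := \mbox{End}(X_\theta,\sigma) \cap \kappa^{-1}(\kappa(\Phi))$ (of size at most $c$ by Theorem~\ref{thm:ethan}(2b) and Corollary~\ref{cor:nonempty}), whose iterates (a) shrink the radii of block codes to the asserted bounds and (b) cycle back to $\Phi$ after a bounded number of steps, yielding both the conjugation identity and the radius bound simultaneously.

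Assuming $\theta$ injective (Theorem~\ref{thm:injective}), $\theta^p$ restricts to a bijection $X_\theta \to \theta^p(X_\theta) = \pi^{-1}(r^p\Z_r)$. When $\kappa(\Phi) = k/(1-r^p)$, the identity $\kappa(\Phi) = k + r^p\kappa(\Phi)$ rearranges to $\kappa(\Phi) - k \in r^p\Z_r$, which ensures that $\sigma^{-k}\circ\Phi\circ\theta^p$ lands in $\theta^p(X_\theta)$, so
\[
\tau(\Phi) := \theta^{-p}\circ\sigma^{-k}\circ\Phi\circ\theta^p
\]
is well-defined (take $k = 0$, $p = 1$ in the $\kappa(\Phi) = 0$ case). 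Using $\theta^p\circ\sigma = \sigma^{r^p}\circ\theta^p$ and tracking $\pi$-values along the composition show that $\tau(\Phi)$ commutes with $\sigma$ and that $\kappa(\tau(\Phi)) = \kappa(\Phi)$. Injectivity of $\tau$: if $\tau(\Phi_1) = \tau(\Phi_2)$, then $\Phi_1\circ\theta^p = \Phi_2\circ\theta^p$, so $\Phi_1$ and $\Phi_2$ agree on $\theta^p(X_\theta)$; by $\sigma$-equivariance together with $X_\theta = \bigcup_{0\le j<r^p}\sigma^j\theta^p(X_\theta)$, they coincide. Thus $\tau$ permutes $E$ and $\tau^{c!} = \mbox{Id}$. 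A short induction, using $\sigma\circ\theta^{-p} = \theta^{-p}\circ\sigma^{r^p}$ on $\theta^p(X_\theta)$ to shuttle powers of $\sigma$ through, yields
\[
\tau^n(\Phi) = \theta^{-np}\circ\sigma^{-M_n}\circ\Phi\circ\theta^{np}, \qquad M_n = k\bigl(1 + r^p + \cdots + r^{(n-1)p}\bigr),
\]
and $n = c!$ gives the conjugation identities of parts~2 and~3.

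For the radius, write $\Phi$ as a block code of left radius $L$ and right radius $R$ via Curtis--Hedlund--Lyndon. From $\theta^p(\tau(\Phi)(x))_j = \Phi(\theta^p(x))_{j-k}$ for $0\le j<r^p$, and the fact that $\tau(\Phi)(x)_0$ is recovered from this length-$r^p$ block by injectivity of $\theta^p$, one obtains bounds for the left and right radii of $\tau(\Phi)$ of $\lceil(k+L)/r^p\rceil$ and $\max(0,\lfloor(R+r^p-1-k)/r^p\rfloor)$ respectively. A short analysis of these recurrences shows: in the periodic case ($k\ge 1$), the left radius descends to $1$ and the right radius to $0$; in the $\kappa(\Phi)=0$ case ($k = 0$, $p = 1$) both descend to at most $1$. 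Choosing $N$ a multiple of $c!$ large enough that $\tau^N(\Phi)$ has reached these stable radii, $\Phi = \tau^N(\Phi)$ inherits them.

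The principal obstacle is the bookkeeping needed to justify each composition appearing in $\tau^n(\Phi)$: at every stage, the argument of $\theta^{-p}$ must lie in $\theta^p(X_\theta)$. This is handled inductively by the pair of identifications $\theta^p(X_\theta) = \pi^{-1}(r^p\Z_r)$ and $\kappa(\Phi) - k \in r^p\Z_r$, which together force $\sigma^{-k}\circ\Phi\circ\theta^{np}$ to take values in $\theta^{(n-1)p}(X_\theta) = \pi^{-1}(r^{(n-1)p}\Z_r)$ at each depth of the iteration.
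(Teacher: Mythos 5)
Your proposal is correct, and its engine is the same as the paper's: the conjugation operator $\tau(\Phi)=\theta^{-p}\circ\sigma^{-k}\circ\Phi\circ\theta^{p}$, well defined because $\theta^{p}(X_\theta)=\pi^{-1}(r^{p}\Z_r)$ and $\kappa(\Phi)-k\in r^{p}\Z_r$, permutes the finite fiber $E=\mbox{End}(X_\theta,\sigma)\cap\kappa^{-1}(\kappa(\Phi))$ of size at most $c$ (Theorem~\ref{thm:ethan}(2b) and Corollary~\ref{cor:nonempty}), so $\tau^{c!}=\mathrm{Id}$ on $E$, and unwinding $\tau^{c!}(\Phi)=\Phi$ gives the displayed commutation relations exactly as in the paper. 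The genuine difference is how the radius bound is extracted. The paper conjugates once by a single large power $\theta^{n}$, with $n$ a multiple of $p$ chosen so that the $n$-digit truncation $w$ of $\kappa(\Phi)$ satisfies $R<w<r^{n}-R$ (possible because the repeating block of $\kappa(\Phi)$ is neither all $0$'s nor all $(r-1)$'s, $R$ being the radius of $\Phi$); this one step already produces a map of left radius at most $1$ and right radius $0$, and the bound is transferred back to $\Phi$ by injectivity of $\kappa$ in the coincidence case and, in general, by surjectivity of the conjugation on the fiber. You instead iterate the fixed small-step operator $\tau$ and track the contraction recurrences $L\mapsto\lceil(k+L)/r^{p}\rceil$ and $R\mapsto\lfloor(R+r^{p}-1-k)/r^{p}\rfloor$ (these are correct, and the hypothesis $k\ge 1$, forced by $\kappa(\Phi)\notin\Z$, is exactly what drives the right radius to $0$), then invoke $\Phi=\tau^{N}(\Phi)$ for a large multiple $N$ of $c!$ to conclude $\Phi$ itself has the stabilized radii. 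Your route is more unified, deriving both conclusions from one operator with no case split on coincidences, at the cost of the recurrence bookkeeping; the paper's route gets the radius in one stroke but must separately justify the choice of $n$ and the transfer back to $\Phi$. One small point you should add: to see that $\tau$ maps $E$ into $E$, note also that $\tau(\Phi)$ is continuous (since $\theta^{p}$ is a homeomorphism onto its compact image) and surjective (a continuous shift-commuting self-map of a minimal shift is automatically onto), so that it is indeed an endomorphism.
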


\begin{proof}
We first assume that $\theta$ has a coincidence, so that $\kappa$ is an 
injection. Let $R$ be the radius of $\Phi$ and let $t=\kappa(\Phi)$. 
By assumption, $t$ is periodic as a point of $\Z_r$, with period $p$, say. 
Further, the repeating block is not all 0's or all $r-1$'s. 
Hence there exists $n$, a multiple of $p$, such that 
$t=r^nt+w$ and $R<w<r^n-R$. Now $(\Phi(x))_i$ is determined by $x_{[i-R,i+R]}$,
so that $(\sigma^{-w}\circ\Phi(x))_i$ is determined by $x_{[i-w-R,i-w+R]}$ and in
particular, $(\sigma^{-w}\circ\Phi(x))_{[0,r^n)}$ 
is determined by $x_{[-r^n,r^n)}$.

Since $\kappa(\sigma^{-w}\circ\Phi)\in r^n\Z_r$, $\sigma^{-w}\circ\Phi$ 
maps $\theta^n(X_\theta)$ to itself, and since recognizability tells 
us that $\theta^n : X_\theta \rightarrow \theta^n (X_\theta)$  is invertible,
let $\Psi(y):=\theta^{-n}\circ\sigma^{-w}\circ\Phi\circ
\theta^n(y)$. The above ensures that $\Psi$ is an 
automorphsim of $(X,\sigma)$, and the injectivity of $\theta$ implies
that $(\Psi(y))_0$ depends on $y_{[-1,0]}$. 
Notice that $\pi(\theta(x))=r\pi(x)$, so 
that $\pi(\Phi\circ\theta^nx)=r^n\pi(x)+\kappa(\Phi)$ and
$\pi(\Psi(x))=\pi(\theta^{-n}\circ\sigma^{-w}\circ\Phi\circ\theta^nx)=
r^{-n}(-w+r^n\pi(x)+\kappa(\Phi))=\pi(x) +t$. 
Hence $\kappa(\Psi)=t=\kappa(\Phi)$. 
By injectivity of $\kappa$ we see that $\Phi=\Psi$, so that $\Phi$ has
left radius 1 and right radius 0 as required. 

Suppose now that $\theta$ has column number $c$ and $t\in\kappa(\mbox{Aut}(X_\theta,T))
\setminus\Z$.
Theorem \ref{thm:ethan} 
tells us that the group homomorphism $\kappa$ has kernel of size $K\leq c$.
Let 
$\{\Phi_1, \ldots ,\Phi_K\}= \kappa^{-1}(t)$. As above, we find 
$n$ and $w$ that work for all of the $\Phi_i$'s, and consider  
the map which sends $\Phi \in \{ \Phi_1, \ldots ,\Phi_K\}$ to 
$\theta^{-n} \circ \sigma^{-w}\circ \Phi \circ \theta^n$. This 
map is a bijection of  $\{ \Phi_1, \ldots ,\Phi_K\}$, and, 
since $\theta^{-n} \circ \sigma^{-w}\circ \Phi \circ \theta^n$ 
has left radius one and right radius 0, we see that all the maps 
$\Phi_i$ satisfy this property. 

If $t=k/(1-r^p)$, the  calculation above shows that
$\Phi\mapsto \theta^{-p}\circ\sigma^{-k}\circ\Phi\circ\theta^p$ 
is a permutation of $\kappa^{-1}(t)$. 
Since $|\kappa^{-1}(t)|\le c$, one sees by iterating
\begin{equation*}
\theta^{-c!p}\circ \sigma^{-k(1+r^p+r^{2p}+\ldots + r^{(c!-1)p})}\circ 
\Phi \circ \theta^{c!p} = \Phi .\label{eq:commrel}
\end{equation*}

The properties of $\Phi$ such that $\kappa(\Phi)=0$ are established similarly. 
\end{proof}

If $t\in\Z_r$ is such that $nt=:k\in\Z$ for some $n$, 
then we define $\lfloor t\rfloor
=\lfloor k/n\rfloor \in \Z$. We define $\lceil t\rceil$ similarly as 
$\lceil k/n\rceil$. We can now rephrase the radius of $\Phi$ 
in terms of $\kappa(\Phi)$.

\begin{corollary}\label{cor:roundkappa}
Let $\theta$  be an injective, primitive, length $r$ substitution of height one, 
such that $(X_\theta,\sigma)$ is infinite. 
Then for each $\Phi\in\mbox{Aut}(X_\theta,\sigma)$ such that $\kappa(\Phi)\not\in\Z$,
 one has that
$\Phi(x)_0$ depends only on $x_{[\lfloor \kappa(\Phi)\rfloor, 
\lceil\kappa(\Phi)\rceil]}$. 
\end{corollary}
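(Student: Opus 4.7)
The plan is to reduce the general case $\kappa(\Phi)\notin \Z$ to the purely periodic case already handled by Proposition \ref{prop:radius_two}. Set $t:=\kappa(\Phi)$. By Theorem \ref{thm:upper_bound}, $t$ is a rational number with denominator coprime to $r$, and by assumption $t\notin\Z$. Let $m:=\lceil t\rceil$, viewing $t$ as a rational; then the fractional part $t-m$ lies strictly in $(-1,0)$ as a rational, and its denominator is still coprime to $r$.

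The key intermediate claim is that every rational in $(-1,0)$ with denominator coprime to $r$ has a \emph{purely} periodic $r$-adic expansion. I would prove this directly: write such a rational as $-p/q$ with $0<p<q$ and $\gcd(q,r)=1$, choose $n$ with $r^n\equiv 1\pmod q$, and write $r^n-1=kq$. Then $-p/q=pk/(1-r^n)$, and the identity $1/(1-r^n)=\sum_{j\ge 0}r^{jn}$ in $\Z_r$, together with the base-$r$ expansion of $pk$ (having at most $n$ digits since $0<pk<r^n-1$), exhibits $-p/q$ as the purely periodic sequence $\overline{w_{n-1}\ldots w_0}$, where $w_{n-1}\ldots w_0$ are the base-$r$ digits of $pk$.

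Applying this to $t-m$, the automorphism $\Psi:=\sigma^{-m}\circ\Phi\in\mbox{Aut}(X_\theta,\sigma)$ satisfies $\kappa(\Psi)=t-m\in\Z_r\setminus\Z$ with $t-m$ purely periodic. Proposition \ref{prop:radius_two} then gives that $\Psi$ has right radius $0$ and left radius at most $1$, i.e.\ $\Psi(x)_0$ is determined by $x_{-1}$ and $x_0$. Translating back through $\Phi=\sigma^m\circ\Psi$, the value $\Phi(x)_0=\Psi(x)_m$ is determined by $x_{m-1}$ and $x_m$. Since $t\notin\Z$, we have $m-1=\lceil t\rceil-1=\lfloor t\rfloor$ and $m=\lceil t\rceil$, yielding the required dependence of $\Phi(x)_0$ on $x_{[\lfloor t\rfloor,\lceil t\rceil]}$.

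The only step beyond the cited results is the pure-periodicity characterisation for rationals in $(-1,0)$ with denominator coprime to $r$, which is a standard $r$-adic computation. I do not foresee any real obstacle beyond this bookkeeping; the content of the corollary lies essentially in exhibiting the right integer $m$ by which to pre-shift $\Phi$ so as to enter the hypothesis of Proposition \ref{prop:radius_two}.
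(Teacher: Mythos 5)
Your proposal is correct and takes essentially the same route as the paper: both arguments pre-compose $\Phi$ with a power of the shift so that the resulting $\kappa$-value lies in $\Z_r\setminus\Z$ and is purely periodic, apply Proposition \ref{prop:radius_two} to get left radius $1$ and right radius $0$, and translate back through the shift, identifying the shift exponent with $\lceil\kappa(\Phi)\rceil$. The only difference is bookkeeping order: the paper picks $n$ so that $\kappa(\Phi)-n$ is periodic and then deduces $\lfloor\kappa(\Phi)\rfloor=n-1$, $\lceil\kappa(\Phi)\rceil=n$, whereas you fix $m=\lceil\kappa(\Phi)\rceil$ first and verify pure periodicity by the explicit $r$-adic computation (the same computation the paper carries out inside the proof of Lemma \ref{lem:soficsum}).
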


\begin{proof}
Let $n$ be chosen so that $\kappa(\Phi)-n$ is periodic in $\Z_r$ with some 
period $p$, and let $\Psi=\Phi\circ\sigma^{-n}$. If the periodic block is the
base $r$ representation of a number $a$, we have 
$\kappa(\Psi)=a/(1-r^p)$ with $0<a<r^p-1$, so that $(r^p-1)\kappa(\Psi)=-a$. 
Hence  $\kappa(\Phi)=n-a/(r^p-1)$. 
In particular, $\lfloor\kappa(\Phi)\rfloor=n-1$ and $\lceil\kappa(\Phi)\rceil=n$.
By Proposition \ref{prop:radius_two}, $\Psi(x)_0$ depends on $x_{[-1,0]}$, and 
the result follows.
\end{proof}

We remark that a version of Corollary
\ref{cor:roundkappa} exists for integral values of $\kappa(\Phi)$ also. 
If $\theta$ has a coincidence, then integral values of $\kappa(\Phi)$ 
only arise from powers of the shift. Otherwise, 
if $\kappa(\Phi)=n$, then $\Phi(x)_0$ is determined by 
$x_{n-1}$, $x_n$ and $x_{n+1}$. 
If $\theta$ has the property that for each pair $a$ and $a'$ of
distinct elements of $\mathcal A$, there is a $k$ such that $\theta^k(a)$ and
$\theta^k(a')$ differ in some coordinate that is neither the first
nor the last of the block, then
one can prove that $\Phi(x)_0$ is determined by $x_n$. 
A natural question is whether this is the case for all 
primitive constant-length substitutions.

Given left and right radii $m$ and $n$ and a map 
$f\colon \mathcal A^{m+n+1}\to \mathcal A$,
we write $\Phi^f_{m,n}$ for the map $\mathcal A^\Z\to \mathcal A^\Z$ 
given by $\Phi^f_{m,n}(x)_i=f(x_{i-m},\ldots,x_{i+n})$.

\begin{proposition}\label{prop:last_nail_lemma}
Let $\theta$  be an injective primitive, length $r$ substitution on 
$\mathcal A$,  with column number $c$, of height one, and  
such that $(X_\theta,\sigma)$ 
is infinite. 
Let $p\ge 1$ and $0<k<r^p-1$. 

Let $f\colon \mathcal A^2\to \mathcal A$.
Then $\Phi:=\Phi^f_{1,0}$ satisfies $\Phi\in \mbox{Aut}(X_\theta,\sigma)$  
and $\kappa(\Phi)=k/(1-r^p)$ if and only if:
\begin{enumerate}
\item 
if $x_0x_1x_2\in\mathcal L_{X_\theta}$, then $f(x_0,x_1)f(x_1,x_2)\in
\mathcal L_{X_\theta}$; and \label{cond:fblock}
\item For each $x_{-1}x_0x_1\in\mathcal L(X_\theta)$ and $0\le i<r^{c!p}$
\label{cond:crelf}
$$
f(\theta^{c!p}(x_{-1}\cdot x_0)_{i-1},\theta^{c!p}(x_{-1}\cdot x_0)_i)
=\theta^{c!p}(f(x_{-1},x_0)f(x_0,x_1))_{N+i},
$$
where $N=k(1+r^p+\ldots+r^{(c!-1)p})$ and $x_{-1}\cdot x_0$ denotes a 
finite segment of a bi-infinite sequence with $x_{-1}$ in the $-1$st coordinate
and $x_0$ in the 0th coordinate.
\end{enumerate}

Similarly given $g\colon \mathcal A^3\to \mathcal A$, $\Phi^g_{1,1}$ 
satisfies $\Phi^g_{1,1}\in\mbox{Aut}(X_\theta,\sigma)$
and $\kappa(\Phi^g_{1,1})=0$ if and only if 
\begin{enumerate}
\setcounter{enumi}{2}
\item $g(x_0,x_1,x_2)g(x_1,x_2,x_3)\in\mathcal L_{X_\theta}$
whenever $x_0x_1x_2x_3\in\mathcal L_{X_\theta}$; and
\label{cond:gblock}
\item For any $x_{-1}x_0x_1\in\mathcal L_{X_\theta}$ and $0\le i<r^{c!}$, 
\label{cond:crelg}
\begin{equation*}
g(u_{i-1},u_i,u_{i+1})=
(\theta^{c!}g(x_{-1},x_0,x_1))_i \, ,
\end{equation*}
where $u=u_{-r^{c!}}u_{-r^{c!}+1}\ldots u_{-1}\cdot u_0\ldots
u_{2r^{c!}-1}$ is the word 
obtained by applying $\theta^{c!}$ to the word $x_{-1}\cdot x_0x_1$.
\end{enumerate}
\end{proposition}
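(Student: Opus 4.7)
The plan is to handle the two claims in parallel, focusing on the $f$-case; the $g$-case differs only in replacing the shift factor $\sigma^N$ by the identity. Write $N = k(1 + r^p + \ldots + r^{(c!-1)p})$ and note the algebraic identity $N(r^p - 1) = k(r^{c!p} - 1)$, which is the bridge between $\kappa$-values and shift exponents.

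For the forward direction of the first claim, suppose $\Phi = \Phi^f_{1,0}\in\mbox{Aut}(X_\theta,\sigma)$ with $\kappa(\Phi) = k/(1-r^p)$. Condition~(\ref{cond:fblock}) is immediate: any 2-block of $\Phi(x)\in X_\theta$ must lie in $\mathcal{L}_{X_\theta}$, and the 2-block at $(n,n+1)$ depends only on $x_{n-1}x_nx_{n+1}$. For~(\ref{cond:crelf}), Proposition~\ref{prop:radius_two} supplies the commutation relation $\Phi\circ\theta^{c!p} = \sigma^N\circ\theta^{c!p}\circ\Phi$ on $X_\theta$; reading this at coordinate $i\in[0,r^{c!p})$ for a point with the prescribed 3-block at the origin yields~(\ref{cond:crelf}) verbatim, using that $N<r^{c!p}$.

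For the reverse direction, assume~(\ref{cond:fblock}) and~(\ref{cond:crelf}). Applying~(\ref{cond:crelf}) to every $\sigma^n x$ and exploiting $\theta^{c!p}\circ\sigma = \sigma^{r^{c!p}}\circ\theta^{c!p}$ globalises~(\ref{cond:crelf}) into the pointwise identity $\Phi(\theta^{c!p}(x)) = \sigma^N(\theta^{c!p}(\Phi(x)))$ for every $x \in X_\theta$, where on the right $\theta^{c!p}$ is applied letter-by-letter to $\Phi(x)$ (this makes sense even before we know $\Phi(x)\in X_\theta$). Choose $x_0 \in X_\theta$ fixed by some $\theta^M$ with $M$ a multiple of $c!p$ (such points exist by primitivity and the pigeonhole principle), and iterate the commutation to obtain $y:=\Phi(x_0) = \sigma^{N'}\theta^M(y)$ where $N' = N(r^M-1)/(r^{c!p}-1)$. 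This means $y_j = \theta^M(y)_{j+N'}$ for all $j$, so any window $y_{[a,b]}$ of length $L$ is a subword of $\theta^M$ applied to a $y$-window of length at most $\lfloor (L+r^M-2)/r^M\rfloor+1$, which is strictly less than $L$ for every $L\ge 3$. Iterating this window-shrinking reduction terminates at $L=2$, where condition~(\ref{cond:fblock}) closes the induction. Hence every finite subword of $y$ lies in $\mathcal{L}_{X_\theta}$, so $y\in X_\theta$; continuity of $\Phi$ and minimality of $X_\theta$ upgrade this to $\Phi(X_\theta)\subseteq X_\theta$, which then equals $X_\theta$ by minimality, and Theorem~\ref{thm:upper_bound} promotes $\Phi$ to an automorphism.

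To verify $\kappa(\Phi)=k/(1-r^p)$, apply the factor map $\pi$ (which satisfies $\pi\circ\theta^{c!p}=r^{c!p}\cdot\pi$) to $y=\sigma^{N'}\theta^M(y)$ and note $\pi(x_0)=0$ because $(r^M-1)\pi(x_0)=0$ and $r^M-1$ is a unit in $\Z_r$. Solving for $\pi(y)$ and invoking the algebraic identity gives $\kappa(\Phi) = \pi(y) = N'/(1-r^M) = k/(1-r^p)$. The $g$-case is entirely analogous: Proposition~\ref{prop:radius_two} provides $\Phi\theta^{c!}=\theta^{c!}\Phi$, a $\theta^{c!}$-fixed $x_0$ yields $y = \theta^{c!}(y)$, and condition~(\ref{cond:gblock}) takes the place of~(\ref{cond:fblock}) at the base of the same window-shrinking induction. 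The main obstacle is precisely this inductive passage from longer windows of $y$ down to the 2-block base case---the key observation being that the window-length bound strictly decreases for every $L\ge 3$, so the induction bottoms out at exactly the 2-block language condition supplied by~(\ref{cond:fblock})/(\ref{cond:gblock}).
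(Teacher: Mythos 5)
Your proof is correct, and its forward direction coincides with the paper's: both extract condition (\ref{cond:fblock}) from $\Phi(X_\theta)\subseteq X_\theta$ and condition (\ref{cond:crelf}) from the commutation relation of Proposition \ref{prop:radius_two}, as does the globalisation of (\ref{cond:crelf}) into $\Phi\circ\theta^{c!p}=\sigma^N\circ\theta^{c!p}\circ\Phi$ at the start of the converse. Where you genuinely diverge is in how you get $\Phi(X_\theta)\subseteq X_\theta$ from that relation. The paper treats an \emph{arbitrary} $x\in X_\theta$ by de-substitution: for each $n$ it writes $x=\sigma^{-j}\theta^{nc!p}(y)$ with $y\in X_\theta$ and $0\le j<r^{nc!p}$, so that $\Phi(x)=\sigma^{l}(\theta^{nc!p}(\Phi(y)))$ for a suitable $l$, and condition (\ref{cond:fblock}) makes all subwords of $\Phi(x)$ of length up to $r^{nc!p}+1$ legal; letting $n\to\infty$ finishes, with no periodic points and no separate minimality step, at the price of using the covering $X_\theta=\bigcup_{0\le j<r^n}\sigma^j\theta^n(X_\theta)$ coming from Dekking's cyclic-partition lemma. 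You instead evaluate at a single two-sided $\theta$-periodic point $x_0$ (existence by pigeonhole plus primitivity, a fact the paper itself uses elsewhere), extract the self-similarity $y=\sigma^{N'}\theta^M(y)$ for $y=\Phi(x_0)$, run the window-shrinking induction down to the $2$-block base case supplied by (\ref{cond:fblock}) (the arithmetic $L'\le L/2+1<L$ for $L\ge 3$ does check out), and then spread $\Phi(X_\theta)\subseteq X_\theta$ by continuity, shift-commutation and minimality. The $\kappa$-computations likewise differ in detail but not in substance: the paper equates $\pi(\Phi(\theta^{pc!}x))$ computed two ways for a general $x$, while you compute $\pi(y)=N'/(1-r^M)$ at the fixed point using $\pi(x_0)=0$; both rest on $\pi\circ\theta=r\pi$ and on $1-r^M$ being a unit of $\Z_r$, and both yield $k/(1-r^p)$. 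In short, your variant is sound and self-contained; what the paper's version buys is uniformity over all points (no appeal to periodic points or to an extra minimality argument), while yours buys independence from the de-substitution covering of $X_\theta$.
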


\begin{proof}
First suppose that there exists $\Phi\in\mbox{Aut}(X_\theta,\sigma)$ such that
$\kappa(\Phi)=t:=k/(1-r^p)$. By Proposition \ref{prop:radius_two}, 
$\Phi=\Phi^f_{1,0}$ for a map $f\colon \mathcal A^2\to \mathcal A$. 
Condition \eqref{cond:fblock} is obviously
satisfied and condition \eqref{cond:crelf} is the commutation relation of 
Proposition \ref{prop:radius_two} expressed in terms of $f$.

Conversely, suppose $f$ has satisfies Conditions
\eqref{cond:fblock} and \eqref{cond:crelf}. Let $\Phi=\Phi^f_{1,0}$. 
Condition \eqref{cond:crelf} implies 
$\Phi(\theta^{c!p}x)_i=(\sigma^N\circ \theta^{c!p}\circ \Phi(x))_i$
for $0\le i<r^{c!p}$. The same equation applied to iterates of $x$ under $\sigma$
yields $\Phi\circ\theta^{c!p}=\sigma^N\circ\theta^{c!p}\circ\Phi$.
Iterating this equality and using the relation 
$\sigma^r\circ\theta=\theta\circ\sigma$ gives 
\begin{equation*}
\Phi\circ\theta^{nc!p}=(\sigma^N\circ \theta^{c!p})^n\circ\Phi=
\sigma^{N(1+r^{c!p}+\ldots+r^{(n-1)c!p})}\circ \theta^{nc!p}\circ\Phi.
\end{equation*} 

We let $x\in X_\theta$ and
show that $\Phi(x)\in X_\theta$. For any $n$, we can write
$x=\sigma^{-j}\circ \theta^{npc!}(y)$ for some 
$y\in X_\theta$ and $0\le j<r^{npc!}$. Now applying the above, we have
\begin{equation*}
\Phi(x)=\sigma^{-j}\circ \Phi(\theta^{npc!}(y))=
\sigma^{l}(\theta^{npc!}(\Phi(y))),
\end{equation*}
where 
\[
l=-j+N(1+r^{c!p}+\ldots+r^{(n-1)c!p}).
\] 
Since by \eqref{cond:fblock} all 2-words of $\Phi(y)$ belong to 
$\mathcal L_{X_\theta}$, we deduce all subwords of 
$\theta^{npc!}(\Phi(y))$ of size $1+r^{npc!}$ belong to 
$\mathcal L_{X_\theta}$. Since $n$ is arbitrary, we deduce $\Phi(x)\in X_\theta$. 

Since $\Phi$ is shift-commuting by construction, we have 
$\Phi\in\mbox{End}(X_\theta,\sigma)$ and hence $\Phi\in\mbox{Aut}(X_\theta,\sigma)$
by Theorem \ref{thm:upper_bound}.
In particular, $t:=\kappa(\Phi)$ is now defined. 
Now
$\pi(\Phi(\theta^{pc!}(x)))=t+r^{pc!}\pi(x)$
and $\pi(\sigma^N  
(\theta^{pc!}(\Phi(x))))=k   (1+r^p+ \ldots +r^{(c!-1)p}    )      
+r^{pc!}(\pi(x)+t)$.
Since these agree, we have $t(1-r^p)=k$ as required. 

The argument that Conditions \eqref{cond:gblock} and \eqref{cond:crelg}
characterize elements of $\ker\kappa$ is similar.
\end{proof}

We can now deduce

\begin{theorem}\label{thm:last_nail}
Let $\theta$  be a primitive, length $r$ substitution on 
$\mathcal A$  such that $(X_\theta,\sigma)$ is infinite.
Then there is an algorithm to compute $\mbox{Aut}(X_\theta,\sigma)$.
\end{theorem}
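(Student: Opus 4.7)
The plan is to apply a sequence of algorithmic reductions until the problem becomes a finite exhaustive search. First, by Theorem~\ref{thm:injective}, we may replace $\theta$ by a computable injective substitution without changing the topological conjugacy class of the shift, and then by Propositions~\ref{prop:height} and~\ref{prop:automorphism_with_height} we may algorithmically pass to a pure base $\theta'$ of height one and recover $\mbox{Aut}(X_\theta,\sigma)$ from $\mbox{Aut}(X_{\theta'},\sigma')$. Hence it suffices to describe an algorithm in the case that $\theta$ is injective, primitive, of constant length $r$, and of height one.

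Under these assumptions, the next step is to build the labelled graph from the proof of Lemma~\ref{lem:sofic}; this produces the column number $c$, the sofic shift $\hat\Sigma_\theta$, and a word of some length $j$ belonging to $\mathcal F_\theta$ (which is non-empty by Corollary~\ref{cor:nonempty}). Theorem~\ref{thm:upper_bound} then forces every $\Phi\in\mbox{Aut}(X_\theta,\sigma)$ to satisfy $\kappa(\Phi)=\ell/n$ with $\gcd(n,r)=1$ and $n\le r^j-1$. Since automorphisms whose $\kappa$-values differ by an integer differ by a power of $\sigma$, it suffices to enumerate a finite list of candidates $t\in\Z_r$ corresponding to the eventually periodic fractional parts of $\kappa$. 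I would further cull this list by testing the necessary condition $t+\tilde\Sigma_\theta\subseteq\tilde\Sigma_\theta$ coming from Theorem~\ref{thm:ethan}(2a), which is decidable in finite time from the sofic presentation of $\hat\Sigma_\theta$.

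For each surviving candidate $t$ of the form $k/(1-r^p)$, Proposition~\ref{prop:radius_two} identifies any corresponding automorphism as $\Phi^f_{1,0}$ for some $f\colon\mathcal A^2\to\mathcal A$, of which there are finitely many. Conditions~(\ref{cond:fblock}) and~(\ref{cond:crelf}) of Proposition~\ref{prop:last_nail_lemma} are necessary and sufficient for $\Phi^f_{1,0}$ to lie in $\mbox{Aut}(X_\theta,\sigma)$ with $\kappa(\Phi^f_{1,0})=t$, and each involves only finitely many blocks of $\mathcal L_{X_\theta}$ of bounded length and finitely many indices $i<r^{c!p}$. The kernel of $\kappa$ is handled analogously by searching over $g\colon\mathcal A^3\to\mathcal A$ via conditions~(\ref{cond:gblock}) and~(\ref{cond:crelg}). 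Concatenating the output over all surviving $t$ and adjoining $\sigma$ yields an explicit finite generating set for $\mbox{Aut}(X_\theta,\sigma)$; the group relations between these generators can then be read off directly from their block code descriptions.

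The main point requiring care is effectiveness of the language-level and sofic-level checks used throughout. One must enumerate $\mathcal L_{X_\theta}$ up to any fixed length, which is standard for primitive substitutions: every word of length $L$ in $\mathcal L_{X_\theta}$ appears as a subword of $\theta^N(a)$ for some $a\in\mathcal A$ and some $N$ computable from $L$ and the primitivity index. A finite presentation of $\hat\Sigma_\theta$ was produced explicitly in the proof of Lemma~\ref{lem:sofic}, so the containment $t+\tilde\Sigma_\theta\subseteq\tilde\Sigma_\theta$ for rational $t$ reduces to a comparison of finite automata. Termination of the candidate search is guaranteed by Theorem~\ref{thm:upper_bound} (bounded denominators) together with Theorem~\ref{thm:ethan}(2b) (the bound $|\ker\kappa|\le c$), which together confine the enumeration to a finite set that can be exhausted mechanically.
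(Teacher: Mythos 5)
Your proposal is correct and is essentially the paper's own proof: reduce to an injective, height-one substitution, compute $c$ and $j$ from the transition graph of Section~\ref{subsec:transgraph}, bound the denominators of $\kappa$ by Theorem~\ref{thm:upper_bound}, and exhaust the finitely many $2$-block and $3$-block maps using Propositions~\ref{prop:radius_two} and~\ref{prop:last_nail_lemma} (your extra sofic-containment culling step is sound but optional). Two small points: your reductions are in the wrong order --- the pure base of an injective substitution need not be injective, so you should, as the paper does, pass to the pure base first and then apply Theorem~\ref{thm:injective} (or re-apply it after taking the pure base, which is legitimate since height is a conjugacy invariant) --- and the paper replaces your enumeration over all surviving candidates $t$ and the somewhat glib ``read off the relations'' step by Lemma~\ref{lem:cyclic}, which shows $\kappa(\mbox{Aut}(X_\theta,\sigma))$ is cyclic generated by some $1/d$, so that one only needs an element of $\kappa^{-1}(-1/d)$ for the largest feasible $d$ and obtains $\mbox{Aut}(X_\theta,\sigma)$ directly as the semidirect product of $\ker\kappa$ with the cyclic group that element generates.
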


\begin{proof}
If $\theta$ does not have height one, we use Proposition \ref{prop:height}
to compute a pure base of $\theta$. Proposition 
\ref{prop:automorphism_with_height} tells us how to retrieve 
$\mbox{Aut}(X_\theta,\sigma)$ from the automorphism group of its pure base. 
Similarly, if the pure base is not injective, we use 
Theorem \ref{thm:injective} to compute its automorphism group 
from the automorphism group of a conjugate injective substitution shift.
Hence it suffices to give an algorithm for injective substitutions of height one.

We can algorithmically compute the column number, $c$ and also $j$, the
length of the shortest word in $\mathcal F_\theta$ from the transition
graphs described in Section \ref{subsec:transgraph}. 
Recall from Lemma \ref{lem:cyclic} that
$\kappa(\mbox{Aut}(X_\theta,T))$ is a cyclic subgroup of $\Z_r$ 
generated by $1/d$ for some integer $d$.
By Theorem \ref{thm:upper_bound}, $d$ satisfies $\gcd(d,r)=1$ and
$d\le r^j-1$. 
Now $\mbox{Aut}(X_\theta,T)$ is the 
semi-direct product of $\ker\kappa$ with the cyclic subgroup generated
by any element of $\kappa^{-1}(-1/d)$. 

To find $\ker\kappa$, one applies Proposition \ref{prop:last_nail_lemma}
to test all of the finitely many 3-block maps. Note that one can 
algorithmically list the words of length two and 
three that belong to $\mathcal L_{X_\theta}$. One then needs 
to find an element of $\kappa^{-1}(-1/d)$ for the largest $d$ so that this
set is non-empty. By the above, there are finitely
many values of $d$ to check. Since $\gcd(d,r)=1$, we see $d|r^p-1$ for some $p$
and then $-1/d$ can be expressed as $k/(1-r^p)$ for some $0<k<r^p -1$.
By Proposition \ref{prop:last_nail_lemma},
there are finitely many 2-block maps to check for each potential $d$. 
\end{proof}

We do not  address the question of efficiency.

From Corollary \ref{cor:cyclic}, we know that for 
substitutions with a coincidence,
$\mbox{Aut}(X_\theta,\sigma)/\langle \sigma\rangle$
is a finite cyclic group. In Example \ref{example_one_continued}, 
this group was trivial, and 
it is natural to wonder if this is always the case. The following example shows 
that the quotient can indeed be non-trivial.

\begin{example}\label{example_three} Let
 $\mathcal A = \{a,b,c \}$,  $\theta(a)=aba$, $\theta(b)=cba$ and 
$\theta(c)=ccb$. This example appears in Dekking's article \cite{dekking} 
just after Theorem 14. 
He describes how $(X_\theta,\sigma)$ is a relabelling of the second
power shift of the substitution 
$\theta'$ where $\theta'(0)=011$ and $\theta'(1) = 101$. That is, the shift
with symbols $\scalebox{0.9}{\fbox{01}}$, $\scalebox{0.9}{\fbox{11}}$ 
and \scalebox{0.9}{$\fbox{10}$} with 
$\theta(\,\scalebox{0.9}{\fbox{01}}\,)=
\scalebox{0.9}{\fbox{01}\,\fbox{11}\,\fbox{01}}$, 
$\theta(\,\scalebox{0.9}{\fbox{11}}\,)=
\scalebox{0.9}{\fbox{10}\,\fbox{11}\,\fbox{01}}$
and $\theta(\,\scalebox{0.9}{\fbox{10}}\,)=
\scalebox{0.9}{\fbox{10}\,\fbox{10}\,\fbox{11}}$.
Then $(X_\theta, \sigma)$ 
is conjugate to $(X_{\theta'}, \sigma'^2)$ (where we use
$\sigma'$ to denote the shift on $X_{\theta'}$). 
As a consequence, the map $\sigma'$ is conjugate to an automorphism $\Phi$
of $(X_\theta,\theta)$ satisfying $\Phi^2=\sigma$. 
Using the techniques we 
have developed, it can be verified that $t=\overline 1$ is the only 
non-trivial periodic 3-adic integer  
such that $\tilde\Sigma_\theta +t = \tilde\Sigma_\theta$.
Since $\Phi^2=\sigma$, we have $\kappa(\Phi)= \overline1 2$ and 
we have shown that $\mbox{Aut}(X_\theta,\sigma)=\langle \Phi\rangle$. 

\end{example}

\subsection{Computing $\mbox{Conj}(X_\theta,X_{\theta'})$.}
\label{subsec:conjugacy_computation}
The techniques of Section \ref{subsec:compaut} can be generalized 
to compute the set of conjugacies between two substitution shifts 
generated by primitive constant-length substitutions,  
one of which generates an infinite shift. We use
$\kappa$ to also denote the  restriction to $\mbox{Conj}(X_\theta,X_{\theta'})$ 
of the map in Theorem \ref{thm:ethan}. We continue to assume that our maximal 
equicontinuous factor mappings are those that were defined in 
Section \ref{subsec:transgraph}, so that $r^n|\pi(x)$ if and 
only if $x\in\theta^n(X_\theta)$.

\begin{proposition}\label{prop:upper_bound_conjugacy} 
Let $\theta$  and $\theta'$ be  primitive,  length $r$ substitutions, 
both  with column number $c$ and of height one,  and 
such that $(X_\theta,\sigma)$ and  
$(X_{\theta'},\sigma)$ are infinite.  
Suppose that $\Phi\in \mbox{Conj}(X_\theta,X_{\theta'})$. Then  
$\kappa (\Phi)$ is  rational, and if $\mathcal F_\theta$ contains a  
word of length $j$, then $n\kappa (\Phi) \in \Z$ for some 
$0\leq n\leq (r-1)( r^{j}-1)$.
\end{proposition}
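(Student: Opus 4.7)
The first step is to establish the translation equality
\[
\tilde\Sigma_\theta + t = \tilde\Sigma_{\theta'},\qquad t:=\kappa(\Phi).
\]
This follows by applying Theorem~\ref{thm:ethan}(2a) to both $\Phi$ and $\Phi^{-1}\in\mbox{Conj}(X_{\theta'},X_\theta)$, noting that the column number $c$ is common to $\theta$ and $\theta'$ because $X_\theta$ and $X_{\theta'}$ are conjugate, and that Theorem~\ref{thm:ethan}(1) applied to $\Phi\circ\Phi^{-1}=\mbox{Id}$ forces $\kappa(\Phi^{-1})=-\kappa(\Phi)$. Identifying the sets $\{z\colon|\pi^{-1}(z)|>c\}$ with $\tilde\Sigma_\theta$ and $\tilde\Sigma_{\theta'}$ via Lemma~\ref{lem:description_of_points_with_several_preimages}, the two containments $\tilde\Sigma_{\theta'}\subset\tilde\Sigma_\theta+t$ and $\tilde\Sigma_\theta\subset\tilde\Sigma_{\theta'}-t$ combine to give the claimed equality.

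The main work is to prove a generalisation of Lemma~\ref{lem:soficsum} adapted to this setting: if $X,Y\subset\Z_r$ are proper sofic shifts of the form $\tilde\Sigma$ with $X+t=Y$, and $X$ admits a forbidden word of length $j$, then $nt\in\Z$ for some $1\le n\le(r-1)(r^j-1)$ with $\gcd(n,r)=1$. The plan is to mirror the proof of Lemma~\ref{lem:soficsum}: assume for contradiction that $t$ is irrational, or rational with denominator exceeding the claimed bound, and deploy the Weyl-type equidistribution argument on $\Omega(mt)\subset S^1$ to produce $\omega\in\Omega(mt)$ lying in a prescribed narrow interval. In the original single-shift argument one then uses a $j$-word $B$ in the language whose successor $B+1$ in the natural $\Z/r^j\Z$ order is forbidden; in the two-sofic setting one instead searches through the $r-1$ non-trivial increments, seeking $B$ valid for $X$ and some $1\le k\le r-1$ such that $B+k$ is forbidden in $Y$, which exists because $Y$ is itself proper. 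A periodic point of $X$ containing $B$, translated by a suitable multiple of $mt$, then produces a tail that must contain $B+k$ infinitely often, contradicting the fact that the translate lies in $Y$.

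The hard part will be executing the two-sofic matching cleanly. In Lemma~\ref{lem:soficsum} the words $B$ and $B+1$ live in the same shift, so their relationship is transparent via the successor map on $\Z/r^j\Z$ and their existence is immediate from properness. Here the ``boundary'' is split across $X$ and $Y$, which have a priori unrelated forbidden-word structures, so one must argue more carefully by transporting via the translation $t$ and using properness of $Y$. The need to search over $r-1$ successor increments is precisely what produces the factor $(r-1)$ in the bound; the companion factor $(r^j-1)$ is retained from the classical Lemma~\ref{lem:soficsum}. Once this two-sofic variant is established, the proposition follows immediately with $n$ the denominator delivered by the lemma.
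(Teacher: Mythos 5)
Your first step is fine: applying Theorem \ref{thm:ethan}(2a) to $\Phi$ and to $\Phi^{-1}$ (with $\kappa(\Phi^{-1})=-\kappa(\Phi)$ from Theorem \ref{thm:ethan}(1)) does give $\tilde\Sigma_\theta+\kappa(\Phi)=\tilde\Sigma_{\theta'}$. The genuine gap is in your second step, which is the entire content of the argument. The proof of Lemma \ref{lem:soficsum} is not a ``one-translation'' argument: it applies the translation repeatedly. The multiplier $m$ produced in the rational case and by Lemma \ref{lem:circdense} is what steers $\Omega(mt)$ into the prescribed interval $(r^{-j}/2,r^{-j})$ --- for a single irrational $t$ the set $\Omega(t)$ can be concentrated arbitrarily close to $0$ (take $t$ with isolated nonzero digits at sparser and sparser positions), in which case adding $t$ \emph{once} essentially never changes $j$-block values at the relevant scales --- and the final contradiction needs \emph{both} $w+mt$ and $w+2mt$ to have tails in the target shift. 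All of this is legitimate in the automorphism setting because there the hypothesis $t+\tilde\Sigma_\theta\subset\tilde\Sigma_\theta$ iterates: $mt+\tilde\Sigma_\theta\subset\tilde\Sigma_\theta$ for every $m\ge1$. In your two-sofic setting the hypothesis $\tilde\Sigma_\theta+t=\tilde\Sigma_{\theta'}$ cannot be iterated at all: $\tilde\Sigma_\theta+2t=\tilde\Sigma_{\theta'}+t$, and nothing whatsoever controls $\tilde\Sigma_{\theta'}+t$. So the argument you plan to mirror collapses in both places where multiples of $t$ are used, and your heuristic that ``searching over $r-1$ increments produces the factor $(r-1)$'' is unsubstantiated: nothing in the sketch ensures that a block valid for $\hat\Sigma_\theta$ and a point of $\Omega(t)$ line up so as to produce a $\hat\Sigma_{\theta'}$-forbidden block.

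The paper sidesteps the two-sofic problem entirely by a renormalization trick that stays inside the already-proved one-shift theory. After composing with a power of $\sigma$ one may assume $\kappa(\Phi)\in r\Z_r$; recognizability then makes $\Psi:=\theta'^{-1}\circ\Phi\circ\theta$ a well-defined element of $\mbox{Conj}(X_\theta,X_{\theta'})$, and the relation $\pi(\theta(x))=r\pi(x)$ gives $\kappa(\Psi)=\kappa(\Phi)/r$. Hence $\Psi\circ\Phi^{-1}\in\mbox{Aut}(X_{\theta'},\sigma)$ (equivalently $\Phi^{-1}\circ\Psi\in\mbox{Aut}(X_\theta,\sigma)$, the version that matches the hypothesis on $\mathcal F_\theta$) has $\kappa$-value $\kappa(\Phi)\left(\tfrac1r-1\right)$. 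Now Theorem \ref{thm:upper_bound} --- i.e.\ Lemma \ref{lem:soficsum} in its original single-shift form --- yields rationality and $n\kappa(\Phi)\tfrac{1-r}{r}\in\Z$ for some $n\le r^j-1$, whence $n(r-1)\kappa(\Phi)\in r\Z\subset\Z$; this, and not any search over increments, is where the factor $(r-1)$ in the bound $(r-1)(r^j-1)$ comes from. To rescue your outline you would need either this trick or a genuinely new proof of the two-sofic lemma; mirroring Lemma \ref{lem:soficsum} as described cannot work.
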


\begin{proof} Let $\pi:X_\theta \rightarrow \Z_r$ and 
$\pi':X_{\theta'} \rightarrow \Z_r$ denote the relevant  
maximal equicontinuous factor mappings, and suppose that 
$\Phi \in \mbox{Conj}(X_\theta, X_{\theta'}).$ Since 
$\kappa (\Phi \circ \sigma^n)= 
\kappa (\Phi) + n$, we  can assume, 
by composing $\Phi$ with a power of the shift if necessary, 
that $\kappa (\Phi) \equiv 0 \mod r$, i.e. that
$\kappa(\Phi)\in r\Z_r$.
In this case, noting that $\pi(\theta(x))=r\pi(x)$, we have 
$\pi'(\Phi(\theta(x)))= \kappa (\Phi)+ 
\pi(\theta(x)) =  \kappa (\Phi)+ r\pi(x) \equiv 0\mod r$.   
Since recognizability tells us that 
$\theta: X_\theta \rightarrow \theta (X_\theta)$  and 
$\theta': X_{\theta'} \rightarrow {\theta'} (X_{\theta'})$  
are invertible, then $\Psi:=  \theta'^{-1} \circ\Phi \circ \theta$ 
is a well defined conjugacy. Also,
\[ 
\pi(x) + \kappa (\Psi) = 
\pi'( \theta'^{-1} \circ\Phi \circ \theta  (x))= 
\frac{1}{r} (\pi(\theta(x))+\kappa (\Phi)) =
\pi(x) +\frac{\kappa (\Phi) }{r},  
\]
so that $\kappa  (\Psi)=
\kappa (\Phi)/r$. Finally,  (1) of 
Theorem \ref{thm:ethan} tells  us that 
\[
\kappa (\Psi\circ \Phi^{-1})= \kappa (\Psi) - 
\kappa (\Phi)= \kappa (\Phi)(\tfrac{1}{r}-1),
\]
and  since $\Psi\circ \Phi^{-1} \in \mbox{Aut}(X_{\theta'}, \sigma)$, 
so that $\kappa (\Psi\circ \Phi^{-1})$   is rational,  
we deduce our first claim. 
By Theorem \ref{thm:upper_bound},
$n\kappa (\Psi\circ \Phi^{-1})\in \Z$ for some $0\leq n\leq r^j-1$, 
and our second claim follows.
\end{proof}

The proofs of the following two propositions are a straightforward 
generalization  of those of  Propositions \ref{prop:radius_two} 
and \ref{prop:last_nail_lemma}.

\begin{proposition}\label{prop:radius_two_conjugacy} 
Let $\theta$  and $\theta'$ be  injective primitive,  length $r$ substitutions, 
both  with column number $c$ and of  height one,   and 
such that $(X_\theta,\sigma)$ and  
$(X_{\theta'},\sigma)$ are infinite.  
If $\Phi\in \mbox{Conj}(X_\theta, X_{\theta'})$ has the property
that $\kappa (\Phi)\in\Z_r\setminus\Z$ is periodic, then $\Phi$ 
has right radius zero and left radius at most $1$. 
Further, one has
\begin{equation*}
\theta'^{-c! p}\circ \sigma^{-k(1+r^p+r^{2p}+\ldots + r^{(c!-1)p})}\circ 
\Phi\circ \theta^{c!p} = \Phi,
\end{equation*}
where $c$ is the column number and $\kappa(\Phi)=k/(1-r^p)$
with $0<k<r^p-1$.

If $\kappa (\Phi)=0$, then $\Phi$ 
has left and right radius  at most $1$, and 
\begin{equation*}
\theta'^{-c! }\circ 
\Phi\circ \theta^{c!} = \Phi.
\end{equation*}
\end{proposition}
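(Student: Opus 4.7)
The plan is to mirror the proof of Proposition \ref{prop:radius_two} almost verbatim, with the single substantive change that in the construction of the auxiliary conjugacy the left factor $\theta^{-n}$ is replaced by $\theta'^{-n}$, reflecting that $\Phi$ now lands in $X_{\theta'}$ rather than $X_\theta$. As in the automorphism case, I would first treat the situation where $\kappa$ restricted to $\mbox{Conj}(X_\theta, X_{\theta'})$ is injective (which includes the coincidence case $c=1$, via Theorem \ref{thm:ethan}(2b)), and then lift the argument to general column number $c$ using the at-most-$c$-to-one bound of Theorem \ref{thm:ethan}(2b).

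For the injective case, let $R$ be the radius of $\Phi$ and set $t:=\kappa(\Phi)$. Since $t$ is periodic of period $p$ and its repeating block is neither $0^p$ nor $(r-1)^p$, there is a multiple $n$ of $p$ large enough that $t = r^n t + w$ with $R < w < r^n - R$. The code $\sigma^{-w} \circ \Phi$ has the same radius as $\Phi$, and $\kappa(\sigma^{-w} \circ \Phi) = t - w \in r^n\Z_r$, so it sends $\theta^n(X_\theta)$ into $\theta'^n(X_{\theta'})$. Recognizability of $\theta'$ then lets me define
\[
\Psi := \theta'^{-n} \circ \sigma^{-w} \circ \Phi \circ \theta^n,
\]
which, using the intertwining relations $\theta \circ \sigma = \sigma^r \circ \theta$ and $\theta' \circ \sigma = \sigma^r \circ \theta'$, is a $\sigma$-commuting conjugacy $X_\theta \to X_{\theta'}$; by the injectivity of $\theta$ and $\theta'$ together with the inequalities $R < w < r^n - R$, the coordinate $(\Psi(x))_0$ depends only on $x_{-1}$ and $x_0$, so $\Psi$ has left radius at most 1 and right radius 0. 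A direct computation using $\pi(\theta^n x) = r^n \pi(x)$ and $\pi'(\theta'^n y) = r^n \pi'(y)$ yields $\kappa(\Psi) = t = \kappa(\Phi)$, and the injectivity of $\kappa$ forces $\Psi = \Phi$, establishing the radius bound.

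For general column number $c$, Theorem \ref{thm:ethan}(2b) gives $|\kappa^{-1}(t)| \le c$. I would choose $n$ and $w$ uniformly so that the above construction works for every element of $\kappa^{-1}(t)$ simultaneously; then the assignment $\Phi' \mapsto \theta'^{-n} \circ \sigma^{-w} \circ \Phi' \circ \theta^n$ is a bijection of $\kappa^{-1}(t)$ onto itself, and since every image in its range has left radius at most 1 and right radius 0, so does every element of $\kappa^{-1}(t)$. When $t = k/(1-r^p)$ with $0 < k < r^p - 1$, the choices $n=p$ and $w=k$ work, and the bijection $F(\Phi') := \theta'^{-p} \circ \sigma^{-k} \circ \Phi' \circ \theta^p$ of the at-most-$c$-element set $\kappa^{-1}(t)$ satisfies $F^{c!} = \mbox{Id}$. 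Iterating $F$ and repeatedly applying the identity $\sigma^k \circ \theta'^{-p} = \theta'^{-p} \circ \sigma^{k r^p}$ telescopes the accumulated shift exponent into $k(1 + r^p + \ldots + r^{(c!-1)p})$, producing the displayed commutation relation.

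The case $\kappa(\Phi) = 0$ is handled identically with $w=0$ and $n=1$: for $n$ large $\Psi := \theta'^{-n} \circ \Phi \circ \theta^n$ has radius at most 1 on both sides, and the bijection $\Phi' \mapsto \theta'^{-1} \circ \Phi' \circ \theta$ of the at-most-$c$-element set $\ker\kappa \cap \mbox{Conj}(X_\theta, X_{\theta'})$ has $c!$-th iterate the identity, yielding the stated relation with $c!$ in place of $c!p$. The main obstacle I anticipate is the bookkeeping of which of $\theta$ and $\theta'$ stands on each side of the compositions, together with verifying that a single pair $(n,w)$ can be chosen to serve every element of $\kappa^{-1}(t)$ at once; no ideas beyond those in the proof of Proposition \ref{prop:radius_two} are needed.
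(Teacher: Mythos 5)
Your proposal is correct and coincides with the paper's intended argument: the paper itself gives no separate proof for Proposition \ref{prop:radius_two_conjugacy}, stating only that it is a straightforward generalization of the proof of Proposition \ref{prop:radius_two}, and that generalization—replacing the left factor $\theta^{-n}$ by $\theta'^{-n}$, using recognizability and injectivity of $\theta'$, and keeping track of $\pi$ versus $\pi'$—is exactly what you carry out. The supporting details you supply (the uniform choice of $n$ and $w$ for all of $\kappa^{-1}(t)$, the permutation argument on the at-most-$c$-element set $\kappa^{-1}(t)$ with $F^{c!}=\mbox{Id}$, and the telescoping via $\sigma^{-k}\circ\theta'^{-p}=\theta'^{-p}\circ\sigma^{-kr^p}$) are all sound and mirror the paper's proof of Proposition \ref{prop:radius_two}.
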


\begin{proposition}\label{prop:last_nail_conjugacy_lemma}
Let $\theta$  and $\theta'$ be injective primitive, length $r$ substitutions  
on $\mathcal A$ and $\mathcal A'$ respectively, both with column 
number $c$ and of  height one, and such that $(X_\theta,\sigma)$ and  
$(X_{\theta'},\sigma)$ are infinite. Let $p\ge 1$ and suppose that 
$0<k<r^p-1$.

Let $f\colon \mathcal A^2\to \mathcal A'$.
Then $\Phi:=\Phi^f_{1,0}$ satisfies $\Phi\in \mbox{Conj}(X_\theta,X_{\theta'})$  
and $\kappa(\Phi)=k/(1-r^p)$ if and only if:
\begin{enumerate}
\item 
if $x_0x_1x_2\in\mathcal L_{X_\theta}$, then $f(x_0,x_1)f(x_1,x_2)\in
\mathcal L_{X_{\theta'}}$; and \label{cond:fblockconj}
\item For each $x_{-1}x_0x_1\in\mathcal L(X_\theta)$ and $0\le i<r^{c!p}$
\label{cond:crelfconj}
$$
f(\theta^{c!p}(x_{-1}\cdot x_0)_{i-1},\theta^{c!p}(x_{-1}\cdot x_0)_i)
=\theta'^{c!p}(f(x_{-1},x_0)f(x_0,x_1))_{N+i},
$$
where $N=k(1+r^p+\ldots+r^{(c!-1)p})$ and $x_{-1}\cdot x_0$ denotes a 
finite segment of a bi-infinite sequence with $x_{-1}$ in the $-1$st coordinate
and $x_0$ in the 0th coordinate.
\end{enumerate}

Similarly given $g\colon \mathcal A^3\to \mathcal A'$, $\Phi^g_{1,1}$ 
satisfies $\Phi^g_{1,1}\in\mbox{Conj}(X_\theta,X_{\theta'})$
and $\kappa(\Phi^g_{1,1})=0$ if and only if 
\begin{enumerate}
\setcounter{enumi}{2}
\item $g(x_0,x_1,x_2)g(x_1,x_2,x_3)\in\mathcal L_{X_{\theta'}}$
whenever $x_0x_1x_2x_3\in\mathcal L_{X_\theta}$; and
\label{cond:gblockconj}
\item For any $x_{-1}x_0x_1\in\mathcal L_{X_\theta}$ and $0\le i<r^{c!}$, 
\label{cond:crelgconj}
\begin{equation*}
g(u_{i-1},u_i,u_{i+1})=
(\theta'^{c!}g(x_{-1},x_0,x_1))_i  \,,
\end{equation*}
where $u$ is the block of length $3r^{c!}$
given by $u_{-r^{c!}}u_{-r^{c!}+1}\ldots u_{-1}\cdot u_0\ldots u_{2r^{c!}-1}
=\theta^{c!}(x_{-1}\cdot x_0x_1)$.
\end{enumerate}
\end{proposition}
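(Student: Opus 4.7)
The plan is to mimic the proof of Proposition \ref{prop:last_nail_lemma}, systematically replacing $\theta$ by $\theta'$ in every codomain position. The radius bound from Proposition \ref{prop:radius_two_conjugacy}, recognizability of both $\theta$ and $\theta'$, and the intertwinings $\sigma^r \circ \theta = \theta \circ \sigma$ and $\sigma^r \circ \theta' = \theta' \circ \sigma$ supply all the needed ingredients. For the forward direction, if $\Phi \in \mbox{Conj}(X_\theta, X_{\theta'})$ has $\kappa(\Phi) = k/(1-r^p)$, then Proposition \ref{prop:radius_two_conjugacy} forces $\Phi = \Phi^f_{1,0}$ for some $f \colon \mathcal{A}^2 \to \mathcal{A}'$; condition \eqref{cond:fblockconj} is immediate from $\Phi(X_\theta) \subseteq X_{\theta'}$, and condition \eqref{cond:crelfconj} is the commutation relation $\theta'^{-c!p} \circ \sigma^{-N} \circ \Phi \circ \theta^{c!p} = \Phi$ of Proposition \ref{prop:radius_two_conjugacy} read at each coordinate. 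The $\kappa = 0$ statement for $g \colon \mathcal{A}^3 \to \mathcal{A}'$ is completely parallel.

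For the reverse direction, given $f$ satisfying \eqref{cond:fblockconj} and \eqref{cond:crelfconj}, set $\Phi := \Phi^f_{1,0}$. Condition \eqref{cond:crelfconj} yields $\Phi \circ \theta^{c!p} = \sigma^N \circ \theta'^{c!p} \circ \Phi$ on initial coordinates, and hence everywhere by shift-equivariance; iterating and using the intertwinings produces
\[
  \Phi \circ \theta^{nc!p} = \sigma^{N(1 + r^{c!p} + \cdots + r^{(n-1)c!p})} \circ \theta'^{nc!p} \circ \Phi.
\]
For $x \in X_\theta$, recognizability of $\theta$ writes $x = \sigma^{-j}\theta^{npc!}(y)$ with $y \in X_\theta$ and $0 \le j < r^{npc!}$, so $\Phi(x) = \sigma^{l}\theta'^{npc!}(\Phi(y))$ for some $l$. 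Condition \eqref{cond:fblockconj} shows every $2$-block of $\Phi(y)$ lies in $\mathcal{L}_{X_{\theta'}}$, whence every length-$(1 + r^{npc!})$ subword of $\theta'^{npc!}(\Phi(y))$---and thus of $\Phi(x)$---lies in $\mathcal{L}_{X_{\theta'}}$; letting $n \to \infty$ gives $\Phi(x) \in X_{\theta'}$. The value $\kappa(\Phi) = k/(1-r^p)$ then falls out by applying $\pi_{\theta'}$ to both sides of the commutation relation, exactly as at the end of the proof of Proposition \ref{prop:last_nail_lemma}.

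The main obstacle is promoting $\Phi$ from a factor map to a topological conjugacy, since the appeal to Theorem \ref{thm:upper_bound} (endomorphisms are automorphisms) at the end of Proposition \ref{prop:last_nail_lemma} has no direct two-shift analogue. Surjectivity is free from the minimality of $X_{\theta'}$. For injectivity, the plan is to exploit the shared column number $c$ together with Corollary \ref{cor:nonempty}: the set $S := \Z_r \setminus (\tilde\Sigma_\theta \cup (\tilde\Sigma_{\theta'} - \kappa(\Phi)))$ is a nonempty dense $G_\delta$, and for each $z \in S$ the factor map $\Phi$ sends the $c$-element fiber $\pi_\theta^{-1}(z)$ onto the $c$-element fiber $\pi_{\theta'}^{-1}(z + \kappa(\Phi))$, hence bijectively. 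A hypothetical collision $\Phi(x_1) = \Phi(x_2)$ with $x_1 \ne x_2$ forces $\pi_\theta(x_1) = \pi_\theta(x_2) \notin S$; taking the $\sigma \times \sigma$-orbit closure of $(x_1,x_2)$ in $X_\theta^2$, combining minimality of $X_\theta$ with the $S$-fiber bijectivity in a Baire-category argument should force this orbit closure onto the diagonal, contradicting $x_1 \ne x_2$. Cleanly executing this density step is the delicate point; the $\kappa = 0$ case with $g \colon \mathcal{A}^3 \to \mathcal{A}'$ is then handled identically, up to the right radius increasing to $1$.
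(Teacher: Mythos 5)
Your forward direction, the construction of the shift\--commuting map $\Phi^f_{1,0}$, the surjectivity\--from\--minimality remark, and the computation of $\kappa$ at the end all correctly mirror the paper's intended argument (the paper gives no details, saying only that the proof is a straightforward generalization of Propositions \ref{prop:radius_two} and \ref{prop:last_nail_lemma}), and you are right that the one genuinely new point is promoting the factor map to a conjugacy, since Theorem \ref{thm:upper_bound} has no direct two\--shift analogue. However, your injectivity argument does not close, and the step you flag as ``delicate'' is fatal. What your Baire\--category/minimality argument actually proves is this: any minimal subset of the $\sigma\times\sigma$\--orbit closure of a collision pair $(x_1,x_2)$ projects onto $X_\theta$, hence meets a fiber over your residual set $S$, hence meets (and therefore equals) the diagonal. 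So the orbit closure \emph{contains} the diagonal, i.e.\ every collision pair is proximal. That is not a contradiction: a proximal (for instance asymptotic) pair has orbit closure containing the diagonal without itself lying on it, so nothing forces the orbit closure \emph{into} the diagonal.

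Moreover no repair of this step is possible, because the implication you are trying to prove is false under the stated hypotheses. Take $\mathcal A=\{a,A,1\}$ with $\theta(a)=a1$, $\theta(A)=A1$, $\theta(1)=aA$, and $\mathcal A'=\{0,1\}$ with $\theta'(0)=01$, $\theta'(1)=00$ (period doubling). Both substitutions are injective, primitive, of length $2$, of height one, with column number $1$, and generate infinite shifts. The letter\--merging map $\rho$ defined by $\rho(a)=\rho(A)=0$, $\rho(1)=1$ satisfies $\rho\circ\theta=\theta'\circ\rho$ on letters, so the $3$\--block map $g(x_{-1},x_0,x_1):=\rho(x_0)$ satisfies conditions \eqref{cond:gblockconj} and \eqref{cond:crelgconj}, and $\kappa(\Phi^g_{1,1})=0$; yet $\Phi^g_{1,1}=\rho$ identifies the two distinct $\theta^2$\--fixed points with seeds $A.a$ and $A.A$, so it is not injective. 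In fact $X_\theta$ has four $\theta^2$\--fixed points, all in $\pi^{-1}(0)$, while every fiber of the period\--doubling shift has at most two points, so $\mbox{Conj}(X_\theta,X_{\theta'})=\emptyset$ even though the test is passed. Thus conditions \eqref{cond:fblockconj}--\eqref{cond:crelfconj}, resp.\ \eqref{cond:gblockconj}--\eqref{cond:crelgconj}, characterize elements of $\mbox{Fac}(X_\theta,X_{\theta'})$ with the given $\kappa$, not conjugacies; the gap you identified is real and is present in the paper's own one\--line proof as well. To certify injectivity one needs additional input beyond these conditions, for example a block map passing the analogous test in the reverse direction $X_{\theta'}\to X_\theta$: then the composition lies in $\mbox{End}(X_\theta,\sigma)=\mbox{Aut}(X_\theta,\sigma)$ by Theorem \ref{thm:upper_bound}, which forces both maps to be conjugacies.
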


Let $\theta'$ be a primitive length $r$ substitution with column number $c$,
such that   $(X_{\theta'}, \sigma)$ is infinite. 
Let  $\theta$ be 
another  constant-length substitution.  If $(X_\theta, \sigma)$ and 
$(X_{\theta'},\sigma)$ were topologically conjugate,  
then both systems would be infinite,   and both would have the same 
maximal equicontinuous factor. 
Thus,  conjugacy of the two systems 
implies that $\theta$ and $\theta'$ have the same height, 
and the appropriate dynamical formulation of Cobham's theorem \cite{Coven_Cobham} 
tells us that the lengths of $\theta$ and $\theta'$ 
are powers of the same integer. Corollary \ref{cor:nonempty} 
implies that both have column number $c$. By taking 
a power of $\theta'$ or $\theta$ if necessary, and by considering the 
pure bases of our two substitutions, we are in a position to apply
Propositions \ref{prop:upper_bound_conjugacy},
\ref{prop:radius_two_conjugacy} and \ref{prop:last_nail_conjugacy_lemma}, 
to conclude:
 
\begin{theorem}\label{thm:last_nail_conjugacy_theorem}
Let $\theta$ and $\theta'$ be  primitive, constant-length substitutions. 
Suppose further $(X_{\theta}, \sigma)$ is infinite.
Then there is an algorithm to decide whether $(  X_{\theta}, \sigma  )$ 
and $(X_{\theta'}, \sigma)$ are topologically conjugate. In the case 
that they are topologically conjugate, the algorithm yields a 
topological conjugacy and hence one can algorithmically determine 
$\mbox{Conj}(X_\theta, X_{\theta'})$. 
\end{theorem}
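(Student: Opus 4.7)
My plan is to reduce the decidability question to a finite search, exactly in parallel with the proof of Theorem \ref{thm:last_nail}, but using the ``conjugacy'' versions of the key lemmas just stated. First I would dispose of the preprocessing. If $(X_{\theta'},\sigma)$ is periodic then it cannot be conjugate to $(X_\theta,\sigma)$, and this is decidable from $\theta'$ alone, so I may assume both shifts are infinite. Conjugacy forces the heights of $\theta$ and $\theta'$ to coincide (the height is a conjugacy invariant because it is read off from the maximal equicontinuous factor), so using Proposition \ref{prop:height} and Proposition \ref{prop:automorphism_with_height} I pass to the pure bases, and then using Theorem \ref{thm:injective} I replace each pure base by a conjugate injective substitution; the algorithm's output for the pure bases can be pushed back up to conjugacies between the originals. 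So it suffices to treat the case where $\theta,\theta'$ are both injective and of height one.

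Next I would equalise the lengths. The dynamical Cobham theorem cited in the paper says that if the two shifts are conjugate, the lengths of $\theta$ and $\theta'$ are powers of a common integer, so by replacing $\theta$ and $\theta'$ by suitable common powers $\theta^a$, $\theta'^b$ (which generate the same shift spaces with the same dynamics) I may assume both are length $r$ for the same $r$. At this point Corollary \ref{cor:nonempty} shows any conjugacy must send column-number-$c$ fibres to column-number-$c$ fibres, so equality of column numbers is a necessary and algorithmically checkable condition; if it fails I return ``not conjugate'', and otherwise denote the common column number $c$.

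Now I apply the analytical heart of the argument. Using the transition graph of Section \ref{subsec:transgraph} I algorithmically compute the length $j$ of a shortest word in $\mathcal F_\theta$. Proposition \ref{prop:upper_bound_conjugacy} then says any $\Phi\in \mbox{Conj}(X_\theta,X_{\theta'})$ has $\kappa(\Phi)=\ell/n$ with $0\le n\le (r-1)(r^j-1)$, and modulo $\Z$ the value $\kappa(\Phi)$ lies in a finite, explicitly enumerable set of rationals with denominator coprime to $r$. For each such candidate denominator $d$ I write $-1/d=k/(1-r^p)$ and enumerate all 2-block maps $f\colon\mathcal A^2\to\mathcal A'$; I keep those that satisfy conditions \eqref{cond:fblockconj} and \eqref{cond:crelfconj} of Proposition \ref{prop:last_nail_conjugacy_lemma}, which is a purely finite combinatorial check since the words of length two and three in $\mathcal L_{X_\theta}$ and $\mathcal L_{X_{\theta'}}$ are computable. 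I do the analogous check with 3-block maps $g$ and conditions \eqref{cond:gblockconj}, \eqref{cond:crelgconj} to find any $\Phi$ with $\kappa(\Phi)=0$. If any candidate survives, Proposition \ref{prop:last_nail_conjugacy_lemma} guarantees it is an honest conjugacy, and composing with all powers of $\sigma$ together with $\mbox{Aut}(X_{\theta'},\sigma)$ (computable by Theorem \ref{thm:last_nail}) yields the whole set $\mbox{Conj}(X_\theta,X_{\theta'})$; if none survives for any candidate $d$, then $\mbox{Conj}(X_\theta,X_{\theta'})=\emptyset$ and the algorithm reports non-conjugacy.

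The main obstacle I anticipate is not in any one step of the search but in verifying that the reductions in the first two paragraphs are benign. In particular, one must be careful that passing to powers $\theta^a,\theta'^b$ to equalise lengths does not enlarge $\mbox{Conj}$ spuriously (it does not, since $X_{\theta^a}=X_\theta$ as shift spaces) and that the algorithm of Theorem \ref{thm:injective} produces an \emph{explicit} conjugacy whose inverse can be computed, so that the final conjugacies between the original $(X_\theta,\sigma)$ and $(X_{\theta'},\sigma)$ can be reconstructed. Granted these bookkeeping points, the finiteness bound from Proposition \ref{prop:upper_bound_conjugacy} combined with the effective certificates in Proposition \ref{prop:last_nail_conjugacy_lemma} turns the problem into a finite terminating enumeration, which is exactly what the theorem claims.
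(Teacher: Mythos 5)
Your reduction steps (discarding the finite case, equalising heights, passing to pure bases via Propositions \ref{prop:height} and \ref{prop:automorphism_with_height}, injectivization via Theorem \ref{thm:injective}, equalising lengths via Cobham, and matching column numbers via Corollary \ref{cor:nonempty}) follow the paper's proof, as does your use of Propositions \ref{prop:upper_bound_conjugacy} and \ref{prop:last_nail_conjugacy_lemma} to make the search finite. But the core enumeration step, as you state it, has a genuine flaw: for each admissible denominator $d$ you test only the single value $\kappa=-1/d$ (plus the $\kappa=0$ case). That restriction is justified in the proof of Theorem \ref{thm:last_nail} because there $\kappa(\mbox{Aut}(X_\theta,\sigma))$ is a \emph{subgroup} of $\Z_r$ containing $\Z$, hence by Lemma \ref{lem:cyclic} cyclic and generated by $1/d$ for the largest admissible $d$, so producing one element of $\kappa^{-1}(-1/d)$ recovers the whole group. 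The set $\mbox{Conj}(X_\theta,X_{\theta'})$ is not a group, however: $\kappa(\mbox{Conj}(X_\theta,X_{\theta'}))$ is only a coset $t_0+\kappa(\mbox{Aut}(X_{\theta'},\sigma))$ of a cyclic group $\tfrac 1q\Z$, and a coset need not contain any value of the form $-1/d$ modulo $\Z$, nor any integer. Concretely, with $r=7$ and $\mbox{Aut}(X_{\theta'},\sigma)=\langle\sigma\rangle$, nothing in the paper or in your argument excludes the possibility $\kappa(\mbox{Conj}(X_\theta,X_{\theta'}))=1/6+\Z$; this coset is compatible with all the constraints of Proposition \ref{prop:upper_bound_conjugacy}, yet there is no integer $d$ with $-1/d\equiv 1/6 \pmod \Z$ and no integer in the coset. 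In such a situation your loop rejects every candidate block map and reports ``not conjugate'' even though the systems are conjugate, so the decision procedure as written can return false negatives.

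The repair is small, and is what the paper's (terse) proof intends: enumerate \emph{all} values of $\kappa$ permitted by Proposition \ref{prop:upper_bound_conjugacy}, namely every rational $t$ with $nt\in\Z$ for some $n\le(r-1)(r^j-1)$ and $\gcd(n,r)=1$. After composing with a power of the shift, each such candidate becomes purely periodic, $t=k/(1-r^p)$ with $0<k<r^p-1$ and the denominator dividing $r^p-1$; Proposition \ref{prop:last_nail_conjugacy_lemma} is stated for arbitrary such $k$, not only for those representing $-1/d$, so the same two-block test applies verbatim to each candidate. This enlarged candidate set is still finite and explicitly computable, so the search terminates, and the remainder of your argument is sound: any surviving candidate is an honest conjugacy by Proposition \ref{prop:last_nail_conjugacy_lemma}, and composing it with $\mbox{Aut}(X_{\theta'},\sigma)$ (computable by Theorem \ref{thm:last_nail}) yields all of $\mbox{Conj}(X_\theta,X_{\theta'})$, while an empty search certifies non-conjugacy.
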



\section*{Acknowledgments} 
We thank Michel Dekking, Bryna Kra, Alejandro Maass, 
Samuel Petite and Marcus Pivato for helpful comments. 
We also thank the referee for a thorough reading and many helpful comments.
The third author thanks LIAFA, Universit\'e Paris-7 for 
its hospitality and support.

\bibliographystyle{amsplain}
\def\ocirc#1{\ifmmode\setbox0=\hbox{$#1$}\dimen0=\ht0 \advance\dimen0
  by1pt\rlap{\hbox to\wd0{\hss\raise\dimen0
  \hbox{\hskip.2em$\scriptscriptstyle\circ$}\hss}}#1\else {\accent"17 #1}\fi}


\begin{dajauthors}
\begin{authorinfo}[ethan]
  Ethan M. Coven\\
     Department of Mathematics\\ Wesleyan University \\
  U.S.A.\\
  ecoven\imageat{}wesleyan\imagedot{}edu \\
\end{authorinfo}
\begin{authorinfo}[anthony]
  Anthony Quas\\
  Department of Mathematics and Statistics \\University of Victoria 
\\
  Canada\\
  aquas\imageat{}uvic\imagedot{}ca \\
\end{authorinfo}
\begin{authorinfo}[reem]
  Reem Yassawi\\
Department of Mathematics\\ Trent University\\
  Canada\\
  ryassawi\imageat{}trentu\imagedot{}ca\\
\end{authorinfo}
\end{dajauthors}

\end{document}